\renewcommand{\d}{\mathrm{d}}
\newcommand{\D}{\mathrm{D}}
\newcommand{\e}{\mathrm{e}}
\newtheorem{Thm}{Theorem}[section]
\newtheorem{Lem}[Thm]{Lemma}
\newtheorem{Prop}[Thm]{Proposition}
\newtheorem{Cor}[Thm]{Corollary}
\newtheorem{Rem}[Thm]{Remark}
\newtheorem{Def}[Thm]{Definition}
\newtheorem{Fact}[Thm]{Fact}
\newtheorem{Nota}[Thm]{Notation}
\def\R{\mathbb{R}}
\def\Q{\mathbb{Q}}
\def\C{\mathbb{C}}
\def\Z{\mathbb{Z}}
\def\to{\longrightarrow}
\def\cA{\mathcal{A}}
\def\cB{\mathcal{B}}
\def\cD{\mathcal{D}}
\def\cE{\mathcal{E}}
\def\cF{\mathcal{F}}
\def\cH{\mathcal{H}}
\def\cK{\mathcal{K}}
\def\cM{\mathcal{M}}
\def\cU{\mathcal{U}}
\def\cW{\mathcal{W}}
\def\a{\alpha}
\def\b{\beta}
\def\e{\epsilon}
\def\G{\Gamma}
\def\c{\gamma}
\def\D{\Delta}
\def\d{\delta}
\def\h{\theta}
\def\l{\lambda}
\def\s{\sigma}
\def\t{\tau}
\def\w{\omega}
\def\ze{\zeta}
\def\sl{\mathfrak{sl}}
\def\g{\mathfrak{g}}
\def\ox{\otimes}
\def\o+{\oplus}
\def\bo+{\bigoplus}
\def\x{\times}
\def\p[#1,#2]{\phi_{#1,#2}}
\def\til[#1]{\widetilde{#1}}
\def\what[#1]{\widehat{#1}}
\def\Re{\mathrm{Re}}
\def\Im{\mathrm{Im}}
\def\Res{\mathrm{Res}}
\def\z[#1]{z_{#1}}
\def\oo{\infty}
\def\=>{\Longrightarrow}
\def\=={\equiv}
\def\<{\langle}
\def\>{\rangle}
\def\^{\wedge}
\def\+{\dagger}
\def\sub{\subset}
\def\inv{^{-1}}
\def\dis{\displaystyle}
\def\over[#1]{\overline{#1}}
\def\vec[#1]{\overrightarrow{#1}}
\def\mat[#1, #2]{\left[\begin{array}{ccccc}#1\end{array}\left|\begin{array}{c}#2\end{array}\right.\right]}
\def\xto[#1]{\xrightarrow{#1}}
\def\dd[#1,#2]{\frac{d#1}{d#2}}
\def\del[#1,#2]{\frac{\partial #1}{\partial #2}}
\def\Facts[#1]{\begin{Fact}\mbox{}\begin{itemize}#1\end{itemize}\end{Fact}}
\def\Notation[#1]{\begin{Nota}\mbox{}\begin{itemize}#1\end{itemize}\end{Nota}}
\def\Eqn[#1]{\begin{eqnarray*}#1\end{eqnarray*}}
\def\tab{\;\;\;\;\;\;}
\newcommand{\veca}[2][cccccccccccccccccccccccccccccccccccccccccc]{\left(\begin{array}{#1}#2 \\ \end{array} \right)}
\newcommand{\ffloor}[2][cccccccccccccccccccccccccccccccccccccccccc]{\left\lfloor\begin{array}{#1}#2\end{array}\right\rfloor}
\newcommand{\cceil}[2][cccccccccccccccccccccccccccccccccccccccccc]{\left\lceil\begin{array}{#1}#2\end{array}\right\rceil}
\newcommand{\Eq}[1]{\begin{align}#1\end{align}}
\begin{document}

\title{The classical limit of representation theory of the quantum plane}

\author{  Ivan Chi-Ho Ip\footnote{
        	Kavli IPMU (WPI), 
		The University of Tokyo, 
		Kashiwa, 
		Chiba 277-8583, 
		Japan\newline
          Email: ivan.ip@ipmu.jp}
          }

\date{\today}

\numberwithin{equation}{section}

\maketitle

\begin{abstract}
We showed that there is a complete analogue of a representation of the quantum plane $\cB_q$ where $|q|=1$,
with the classical $ax+b$ group. We showed that the Fourier transform of the representation of $\cB_q$
on $\cH=L^2(\R)$ has a limit (in the dual corepresentation) towards the Mellin transform of the unitary
representation of the $ax+b$ group, and furthermore the intertwiners of the tensor products representation has a limit towards the intertwiners of the Mellin transform of the classical $ax+b$
representation. We also wrote explicitly the multiplicative unitary defining the quantum $ax+b$ semigroup
and showed that it defines the corepresentation that is dual to the representation of $\cB_q$ above, and also correspond precisely to the classical family of unitary representation of the $ax+b$ group.
\end{abstract}

\newpage
\tableofcontents

\section{Introduction}\label{sec:intro}

The $ax+b$ group is the group of affine transformations on the real line $\R$. Together with the three dimensional Heisenberg group they can be viewed as the simplest examples of non-abelian non-compact Lie group. Various difficulties in studying higher dimensional non-compact Lie group are reflected in these simple examples. For example, in the $ax+b$ group, the unitary irreducible representations are now infinite dimensional, and the Mellin transform is used to ``diagonalize" the representation. The matrix coefficients in this case are realized as integral transformations, which can be viewed as the matrix elements with respect to a continuous basis of the representation space. These matrix elements are expressed in terms of the gamma function $\G(x)$. We will see that in the quantum picture, its $q$-analogue, the $q$-gamma function $\G_q(x)$, is closely related to the important quantum dilogarithm function $G_b(x)$. Furthermore, to deal with non-compactness, there is a need to introduce the language of multiplier $C^*$ algebra to define a natural coproduct on the algebra of continuous functions vanishing at infinity, and also to construct the non-compact Haar measure \cite{VD}. Motivating from this, in the quantum picture we must deal with unbounded operators, and the theory of functional calculus for self-adjoint operators will be the main technical tool.

The quantum plane $\cB_q$ is the Hopf *-algebra over $\C$ with \emph{self-adjoint} generators $A,B$ satisfying
\Eq{AB=q^2 BA,}
with the coproduct given by
\Eq{\D(A)=A\ox A,\tab \D(B)=B\ox A+1\ox B.}
It is known that this object is self-dual, so that they can be considered both as the quantum counterpart of $C(G)$, a certain algebra of functions on $G$, the '$ax+b$' group, or $U(\g)$, the enveloping algebra of the Lie algebra $\g$ of $G$. Classically for a Lie group $G$, $U(\g)$ and $C(G)$ are paired by treating $U(\g)$ as left invariant differential operators on $G$ and evaluate the result at the identity. In such a way, representation of $U(\g)$ on a vector space $\cH$ corresponds to corepresentation of the group algebra $C(G)$ on $\cH$ by this pairing. Therefore in order to study the quantum counterpart of these representations, naturally we would like to study the representation of the quantum plane $\cB_q$, and the corepresentation of its dual object, called $\cA_q$ in this paper, under a natural pairing.

Recently in \cite{FK}, Frenkel and Kim derived the quantum Teichm\"uller space, previously constructed by Kashaev \cite{Ka2} and by Fock and Chekhov \cite{CF}, from tensor products of a single canonical representation of the modular double of the quantum plane $\cB_q$. The representation is realized as positive unbounded self-adjoint operators acting on $\cH=L^2(\R)$, and the main ingredient in their construction of the quantum Teichm\"uller space is the decomposition of the tensor product of two $\cB_q$-representations into a direct integral parametrized by a ``multiplicity" module $M\simeq L^2(\R)$, namely:
\Eq{\cH\ox\cH\simeq M\ox \cH.}
The intertwiner of this decomposition is given by a certain kind of ``quantum dilogarithm transform" (cf. Proposition \ref{QDTrans}), where the remarkable quantum dilogarithm function has been introduced by Faddeev and Kashaev \cite{FKa}.

On the other hand, in order to define a corepresentation on the dual object $\cA_q$ with positive generators, the
space of ``continuous functions vanishing at infinity" for the quantum plane $C_\oo(\cA_q)$ based on the functional
calculus of self-adjoint operators is introduced. This coincides with Woronowicz's construction of the quantum '$ax+b$'
group \cite{WZ} using the theory of multiplicative unitaries, restricted to the semigroup setting with $B>0$, so that we don't
run into the difficulty of the self-adjointness of the coproduct. The multiplicative unitary involved produces the
corepresentation of the quantum plane desired, and the corepresentation obtained in this way is shown to have a
classical limit towards the unitary representation for the classical group. Furthermore a pairing between the dual
space corresponds to the canonical representation of $\cB_q$ by unbounded self-adjoint operators defined in \cite{FK}
mentioned above.

The modular double of the quantum plane also naturally arises in this setting. The representation of $\cB_q$ on $\cH=L^2(\R)$ only becomes algebraically irreducible when we consider also its modular double $\cB_{q\til[q]}$, so that it generates a von Neumann algebra of Type I factor, while representation of $\cB_q$ itself generates Type II$_1$ factor which is more exotic \cite{Fa}. Therefore what we are considering in this paper should be viewed as restriction of the representation on $\cH$ to $\cB_q\sub \cB_{q\til[q]}$, especially useful in studying the classical limit. On the other hand, in the dual picture, quite interestingly the modular double elements are also involved in the definition of $C_\oo(\cA_q)$ due to the analytic properties of the Mellin transform, see Remark \ref{modulardouble}.

The quantum dilogarithm function played a prominent role in this quantum theory. This function and its many variants are being studied \cite{Go, Ru, Vo} and applied to vast amount of different areas, for example the construction of the '$ax+b$' quantum group by Woronowicz et.al. \cite{PW,WZ}, the harmonic analysis of the non-compact quantum group $U_q(\sl(2,\R))$ and its modular double \cite{BT,PT1,PT2}, the $q$-deformed Toda chains \cite{KLS} and hyperbolic knot invariants \cite{Ka1}. One of the important properties of this function is its invariance under the duality $b\leftrightarrow b\inv$ that provides the basis for the definition of the modular double of $U_q(\sl(2,\R))$ first introduced by Faddeev \cite{Fa}, and also related, for example, to the self-duality of Liouville theory \cite{PT1} that has no classical counterpart.

It is an interesting problem to find a classical limit to these quantum theories described by the quantum dilogarithm function. Due to the duality between $b\leftrightarrow b\inv$ and the appearance of the term $Q=b+b\inv$, there is no classical limit by directly taking $b\to 0$. In this paper, by utilizing the properties of the quantum dilogarithm function $G_b(x)$, we showed that under a suitable rescaling of parameters and a limiting process that takes $q\to1$ from inside the unit circle in the complex plane, it is possible to obtain the classical gamma function. More precisely, by taking $b$ away from the real axis, Theorem \ref{limit} states that the following limit holds for $b^2=ir\to i0^+$:
\Eq{\lim_{r\to 0} \frac{(2\pi b)G_b(bx)}{(-2\pi ib^2)^x} = \G(x),}
where $(-2\pi ib)>0$, hence the denominator is well-defined. This gives another proof of a similar limit first observed in \cite{Ru1}.

In this way, most properties of this special function reduce to its classical analogues. For example, the $q$-binomial theorem (Lemma \ref{qbi}) derived in \cite{BT}:
\Eq{(u+v)^{it}=b\int_{C}d\t \veca{t\\\t}_b u^{i(t-\t)}v^{i\t}}
is actually the $q$-analogue of the classical formula
\Eq{(x+y)^{it}=\frac{1}{2\pi}\int_{-\oo}^\oo\frac{\G(-is)\G(-it+is)}{\G(-it)}x^{is}y^{it-is}ds,}
see Remark \ref{qbirem}. In particular, the main results of this paper state that the intertwiners of the tensor product decomposition $\cH\ox\cH\simeq \cM\ox \cH$ of the representation of $\cB_q$ given by \cite{FK} has a nice classical analogue, namely the intertwiners of the classical '$ax+b$' group representation under suitable transformation (Theorem~\ref{interlimit}):
\Eq{b^2\cF\ffloor{b\l&bt\\bt_1&bt_2}_*\to\ffloor{\l&t\\t_1&t_2}_{classical}\\
b^2\cF\cceil{b\l&bt\\bt_1&bt_2}_*\to\cceil{\l&t\\t_1&t_2}_{classical}}
as $b=ir\to i0^+$. Furthermore, the corepresentation constructed using the multiplicative unitary also has a classical limit towards the unitary representation $R_+$ of the classical $ax+b$ group (Theorem~\ref{classicalcorep}).

The study of the relationship between the quantum plane and the classical $ax+b$ group is important as it serves as building blocks towards higher quantum group. First of all, we choose to work with quantum semigroup (representing the generators by \emph{positive} operators) since it induces the $b\leftrightarrow b\inv$ duality for $SL_q^+(2,\R)$ as explained in \cite{PT1}, and it also provides an important results on the closure of tensor product of $U_q(\sl(2,\R))$ representations \cite{PT2}. These observations are essential to the relationship between quantum Liouville theory and quantum geometry on Riemann surface \cite{T}. Moreover, it is fundamental in the construction of $GL_q^+(2,\R)$ by the Drinfeld's double construction proposed in \cite{Ip}, an analogue of the classical Gauss decomposition, which provides an important first step in the study of the harmonic analysis of split real quantum groups in the case $|q|=1$ \cite{FI, Ip2}.

The present paper is organized as follows. In Section \ref{sec:ax+b} we recall the definitions and facts about the classical '$ax+b$' group and its representations, and derive the tensor product decomposition of two irreducible representations. In Section \ref{sec:special} we recall some properties of the $q$-special functions, in particular a version of the quantum dilogarithm $G_b(x)$ introduced in \cite{PT2}, and derive a special limiting procedure that enables us to compare it with the classical gamma function. In Section \ref{sec:intertwiner} we recall the $q$-intertwiner for the representation of the quantum plane $\cB_q$ that is obtained in \cite{FK} to deal with the quantization of Teichm\"uller space, and we showed in Section \ref{sec:mainresult} that this intertwiner, under suitable modification, has a classical limit towards precisely the intertwiner of the $ax+b$ group. Finally in Section \ref{sec:corep} we introduce on the dual space $\cA_q$ the space of continuous functions vanishing at infinity $C_\oo(\cA_q)$, and starting from Woronowicz's multiplicative unitary of the quantum '$ax+b$' semigroup, we derive explicitly the corepresentation of the dual space $\cA_q$. We showed that this corepresentation has a limit towards the classical $ax+b$ group representation, and on the other hand, it induces the same representation of $\cB_q$ under a non-degenerate pairing.

\textbf{Acknowledgments.} I would like to thank my advisor Professor Igor Frenkel for proposing the project and providing useful insights to the general picture of the theory. I would also like to thank Hyun Kyu Kim and Nicolae Tecu for helpful discussions.

\section{Classical $ax+b$ Group}\label{sec:ax+b}

\subsection{Representation}\label{sec:ax+b:rep}
First let us recall the theory of representation of the $ax+b$
group. The classical $ax+b$ group is by definition, the group of affine transformations on the real line $\R$, where $a>0$ and $b\in \R$, and they can be represented by a matrix of the form

\begin{eqnarray}
g(a,b)=\veca{a&b\\0&1},
\end{eqnarray}
with multiplication given by
\Eq{g(a_1,b_1)g(a_2,b_2)=\veca{a_1a_2&a_1b_2+b_1\\0&1}.}

 We will also consider the representation of the transpose group
\Eq{ g(a,c)=\veca{a&0\\c&1},}
where the multiplication is given by
\Eq{g(a_1,c_1)g(a_2,c_2)=\veca{a_1a_2&0\\c_1a_2+c_2&1}.}

This corresponds to the coproduct of the quantum plane $\cB_q$ introduced later on
(cf. Section \ref{sec:intertwiner}).

\begin{Thm}[Gelfand]\cite[Ch.V.1]{Vi} Every irreducible unitary representation of the $ax+b$ group is equivalent to one of the following (acting on the left):
\begin{itemize}
\item $R_+:=R_{-i}$ or $R_-:=R_{i}$ where $R_\l$ denote the representation of the $ax+b$ group on $L^2(\R_+, \frac{dx}{x})$ by
\Eq{R_\l(g)\cdot f(x) = e^{\l b x}f(ax);}
\item $T_\rho$, the representation on $\C$ by multiplication by $a^{i\rho}$.
\end{itemize}
\end{Thm}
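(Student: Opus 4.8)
The plan is to follow the classical Mackey machine / induced-representation argument specialized to the $ax+b$ group, which is a semidirect product $\R \rtimes \R_{>0}$ with $\R$ the normal abelian subgroup of translations $b$ and $\R_{>0}$ acting by dilation. First I would identify the dual group $\widehat{\R}$ with $\R$ via $b\mapsto e^{\i\xi b}$, and compute the action of $\R_{>0}$ on $\widehat{\R}$: the element $a$ sends the character $\xi$ to $a\xi$ (up to sign conventions matching $R_\l(g)\cdot f(x)=e^{\l bx}f(ax)$). The orbits of this action are exactly three: the positive half-line $\{\xi>0\}$, the negative half-line $\{\xi<0\}$, and the fixed point $\{0\}$.

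Next I would apply the theorem that irreducible unitary representations of a regular semidirect product are classified by pairs (an orbit, an irreducible representation of the stabilizer of a point on that orbit). For the two open half-line orbits the stabilizer of any point is trivial, so each contributes a single irreducible representation induced from the corresponding character; realizing the induced representation on $L^2$ of the orbit with its invariant measure $\tfrac{d\xi}{\xi}$ (which is $\R_{>0}$-invariant) and then transporting via the obvious identification of the orbit with $\R_{>0}$, one obtains precisely $R_+$ and $R_-$ acting on $L^2(\R_+,\tfrac{dx}{x})$ by $R_\l(g)\cdot f(x)=e^{\l b x}f(ax)$ with $\l=\mp\i$. For the fixed-point orbit $\{0\}$ the translations act trivially, so the representation factors through the quotient $\R_{>0}$, whose irreducible unitary representations are the one-dimensional characters $a\mapsto a^{\i\rho}$, $\rho\in\R$; these are the $T_\rho$. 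One then checks these are pairwise inequivalent (distinct orbits give disjoint representations, and $R_+\not\cong R_-$ since one cannot be carried to the other by any automorphism implemented in the regular representation; the $T_\rho$ are distinguished by their central character).

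The key technical point needing care is verifying that $\R_{>0}$ acts \emph{regularly} on $\widehat{\R}$, i.e. the orbit space is countably separated / standard, so that Mackey's theorem applies and there are no further ``exotic'' irreducibles; here this is immediate since there are finitely many orbits and each is locally closed. A second point is checking that the abstractly induced representations $\mathrm{Ind}_{\R}^{G}\chi_{\pm}$ really coincide, after unitary identification, with the explicit formula $e^{\l bx}f(ax)$ on $L^2(\R_+,\tfrac{dx}{x})$ — this is a direct computation with the Mackey realization of the induced representation on functions over the orbit, using the invariance of $\tfrac{dx}{x}$ to see the representation is unitary without a cocycle correction.

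The main obstacle I expect is purely bookkeeping rather than conceptual: pinning down the Fourier-transform sign and normalization conventions so that the induced representation comes out in exactly the stated form $R_\l(g)\cdot f(x)=e^{\l b x}f(ax)$ with $R_+=R_{-\i}$ and $R_-=R_{\i}$, rather than some reflected or rescaled variant. Alternatively — and this may be the cleaner route for the paper — one can bypass the Fourier picture entirely and argue directly: realize any irreducible representation on a space where the self-adjoint generator $\D$ of the translation subgroup is diagonalized, use the commutation relation coming from the group law to show the dilation subgroup permutes its spectral projections transitively on each of $\{>0\},\{0\},\{<0\}$, conclude the spectrum is one of these three sets by irreducibility, and then read off the action on $L^2(\R_+,\tfrac{dx}{x})$ (or on $\C$) explicitly; Schur's lemma finishes inequivalence. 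Either way the heart of the proof is the orbit analysis, and everything else is routine.
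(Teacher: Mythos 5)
Your proposal is correct: the orbit analysis for the semidirect product $\R\rtimes\R_{>0}$ (two open half-line orbits with trivial stabilizer giving $R_\pm$, the fixed point $\{0\}$ giving the characters $T_\rho$ of the quotient $\R_{>0}$), together with the regularity check and the explicit realization of the induced representations on $L^2(\R_+,\frac{dx}{x})$, is the standard and complete route. Note, however, that the paper itself offers no proof of this statement — it is quoted directly from Vilenkin \cite[Ch.V.1]{Vi} — so there is nothing internal to compare against; your Mackey-machine argument (equivalently, the direct spectral analysis of the translation generator that you sketch as an alternative) is essentially the classical Gelfand--Naimark proof reproduced in that reference.
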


Similarly, the left action of the transpose group is given by the
action of the inverse element

\Eq{ g\inv=\veca{a\inv&-\frac{c}{a}\\0&1}}

\Eq{R_\l(g^T)\cdot f(x)=e^{-\l c x/a}f(a\inv x) = R_\l(g\inv)\cdot
f(x).}

Let us recall the method of Mellin transform, which gives us an explicit expression of the matrix coefficients in terms of the gamma function:

\begin{Thm}Let $f(x)$ be a continuous function on the half line $0<~x<~\oo$. Then its Mellin transform is defined by
\Eq{\phi(s):=(\cM f)(s)=\int_0^\oo x^{s-1} f(x)dx,} whenever the
integration is absolutely convergent for $a<\Re(s)<b$. By the Mellin
inversion theorem, $f(x)$ is recovered from $\phi(s)$ by
\Eq{f(x):=(\cM\inv\phi)(x)=\frac{1}{2\pi}\int_{c-i\oo}^{c+i\oo}x^{-s}\phi(s)ds}
where $c\in\R$ is any value in between $a$ and $b$.
\end{Thm}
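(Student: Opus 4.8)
The plan is to reduce both the definition and the inversion formula to the classical Fourier transform on $\R$ via the exponential substitution $x=e^{-u}$. First I would record that $\phi(s)$ is holomorphic in the vertical strip $a<\Re(s)<b$: writing $s=c+it$ and differentiating under the integral sign (justified by the absolute convergence hypothesis together with Morera's theorem and Fubini) gives holomorphy, while the pointwise bound $|x^{s-1}f(x)|\le x^{c-1}|f(x)|$ shows the integral defining $\phi$ converges uniformly on compact sub-strips.

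Next, fix $c\in(a,b)$ and substitute $x=e^{-u}$, $u\in\R$, in the Mellin integral. A short computation turns it into
$$\phi(c+it)=\int_{-\oo}^{\oo}e^{-itu}\,g(u)\,du,\qquad g(u):=e^{-cu}f(e^{-u}),$$
so that $\phi(c+i\,\cdot\,)$ is precisely the Fourier transform of $g$, and $g\in L^1(\R)$ exactly because $c$ lies in the convergence strip. The classical Fourier inversion theorem then recovers $g$, hence $f$, as
$$g(u)=\frac{1}{2\pi}\int_{-\oo}^{\oo}e^{itu}\phi(c+it)\,dt,$$
and undoing the substitution $u=-\log x$ and re-parametrizing the $t$-integral by $s=c+it$ yields the claimed inversion formula $f(x)=\frac{1}{2\pi}\int_{c-i\oo}^{c+i\oo}x^{-s}\phi(s)\,ds$. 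Finally, to see that the value of $c$ is immaterial, I would shift the contour between two admissible lines $\Re(s)=c_1$ and $\Re(s)=c_2$: holomorphy of $x^{-s}\phi(s)$ in the strip together with the Riemann--Lebesgue decay $\phi(c+it)\to 0$ as $|t|\to\oo$ (uniform for $c$ in compact subsets of $(a,b)$) makes the horizontal connecting segments vanish in the limit, so Cauchy's theorem gives equality.

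The main obstacle is the analytic bookkeeping rather than any single deep step: one must pin down the regularity hypotheses on $f$ under which \emph{pointwise} Fourier inversion is valid (for instance $g$ and its Fourier transform both in $L^1$, or $g$ of bounded variation with the usual average-of-one-sided-limits convention at jump points), and one must control the decay of $\phi$ along vertical lines well enough to justify both the contour shift and the convergence of the inversion integral itself. These are exactly the places where the bare hypotheses ``$f$ continuous and the Mellin integral absolutely convergent on a strip'' have to be reinforced; everything else is the routine dictionary between the multiplicative group $\R_{>0}$ and the additive group $\R$.
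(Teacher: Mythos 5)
The paper states this as a recalled classical fact (with a pointer to the literature) and gives no proof, so there is nothing internal to compare against; your reduction to Fourier inversion via $x=e^{-u}$, $g(u)=e^{-cu}f(e^{-u})$, is the standard and correct argument, and you rightly flag that pointwise recovery of $g$ from $\phi(c+i\,\cdot)$ needs a hypothesis beyond ``$f$ continuous and the Mellin integral absolutely convergent'' (e.g.\ $\phi(c+it)\in L^1(dt)$, which suffices at points of continuity, or a local bounded-variation/Dini condition). One small bookkeeping point: with the parametrization $s=c+it$ one has $ds=i\,dt$, so the inversion constant is $\frac{1}{2\pi i}\int_{c-i\oo}^{c+i\oo}(\cdot)\,ds$ if $ds$ is the genuine complex differential, or $\frac{1}{2\pi}\int_{-\oo}^{\oo}(\cdot)\,dt$; the paper's $\frac{1}{2\pi}\,ds$ is the latter convention in disguise, and your derivation lands on the correct normalization either way.
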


Here we also record some analytic properties for the Mellin transform. For further details see \cite{PK}.
\begin{Prop}\label{MTpole}
(Strip of analyticity) If $f(x)$ is a locally integrable function on $(0,\oo)$ such that it has decay property:
\Eq{\label{growth}f(x)=\left\{\begin{array}{cc}O(x^{-a-\e})&x\to 0^+,\\O(x^{-b+\e})&x\to +\oo, \end{array}\right.} for
every $\e>0$ and some $a<b$, then the Mellin transform defines an analytic function $(\cM f)(s)$ in the strip
$$a<\Re(s)<b.$$

(Analytic continuation) Assume $f(x)$ behaves algebraically for $x\to 0^+$, i.e. \Eq{f(x)\sim \sum_{k=0}^\oo A_k
x^{a_k}} where $\Re(a_k)$ increases monotonically to $\oo$ as $k\to\oo$. Then the Mellin transform $(\cM f)(s)$ can be
analytically continued into $\Re(s)\leq a=-\Re(a_0)$ as a meromorphic function with simple poles at the points $s=-a_k$
with residue $A_k$.

A similar analytic property holds for the continuation to the right half plane.

(Growth) Let $f(x)$ be a holomorphic function of the complex variable $x$ in the sector $-\a<\arg x<\b$ where
$0<\a,\b\leq \pi$, and satisfies the growth property \eqref{growth} uniformly in any sector interior to the above
sector.

Then $(\cM f)(s)$ has exponential decay in $a<\Re(s)<b$ with
\Eq{(\cM f)(s)=\left\{\begin{array}{cc}O(e^{-(\b-\e)t})&t\to
+\oo,\\O(e^{(\a-\e)t})&t\to -\oo, \end{array}\right.} for any $\e>0$
uniformly in any strip interior to $a<\Re(s)<b$.

(Parseval's formula) We have \Eq{\int_0^\oo f(x)g(x)x^{z-1}dx =
\frac{1}{2\pi i}\int_{c-i\oo}^{c+i\oo} \cM f(s)\cM g(z-s)ds,} where
$\Re(s)=c$ lies in the common strip for $\cM f$ and $\cM g$. In
particular we have \Eq{\int_0^\oo |f(x)|^2 dx =
\frac{1}{2\pi}\int_{-\oo}^\oo |\cM f(\s+it)|^2dt.}
\end{Prop}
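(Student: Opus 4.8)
The plan is to establish the four assertions in turn, each being a standard property of the Mellin transform; the arguments follow \cite{PK}. For the \emph{strip of analyticity}, one splits $\int_0^\oo x^{s-1}f(x)\,dx$ at $x=1$: on $(0,1)$ the decay hypothesis gives $|x^{s-1}f(x)|\le C_\e\,x^{\Re(s)-1-a-\e}$, which is integrable whenever $\Re(s)>a+\e$, and since $\e>0$ is arbitrary this covers all $\Re(s)>a$; symmetrically the tail $\int_1^\oo$ converges for $\Re(s)<b$. On every substrip $a+\delta\le\Re(s)\le b-\delta$ these bounds are uniform, so $(\cM f)(s)$ is holomorphic on $a<\Re(s)<b$ by Morera's theorem (or by differentiation under the integral sign). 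For the \emph{analytic continuation} to the left, fix $N$ and write $f(x)=\sum_{k=0}^N A_k x^{a_k}+R_N(x)$ with $R_N(x)=O(x^{\Re(a_{N+1})-\e})$ as $x\to0^+$. Each elementary term contributes $\int_0^1 x^{s-1}A_k x^{a_k}\,dx=\frac{A_k}{s+a_k}$, meromorphic on $\C$ with a simple pole at $s=-a_k$ of residue $A_k$; the remainder $\int_0^1 x^{s-1}R_N(x)\,dx$ is holomorphic for $\Re(s)>-\Re(a_{N+1})$, and $\int_1^\oo x^{s-1}f(x)\,dx$ is holomorphic throughout $\Re(s)<b$. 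Adding these and letting $N\to\oo$ produces the meromorphic continuation past $\Re(s)=a$ with exactly the claimed poles and residues; the continuation to the right is identical, using instead the algebraic asymptotics at $+\oo$.

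For the \emph{growth} estimate, fix $s$ with $a<\Re(s)<b$ and write $t=\Im(s)$. By Cauchy's theorem and the uniformity of the growth bound inside the sector --- which is precisely what lets one discard the connecting circular arcs near $0$ and near $\oo$ --- the contour of integration may be rotated from the positive real axis to the ray $\arg x=\theta$ for any $\theta\in(-\a,\b)$, giving $(\cM f)(s)=e^{i\theta s}\int_0^\oo r^{s-1}f(re^{i\theta})\,dr$. Since $|e^{i\theta s}|=e^{-\theta t}$ and the remaining integral stays bounded (uniformly for $\Re(s)$ in a compact substrip, again by the growth bound), we get $|(\cM f)(s)|\le C\,e^{-\theta t}$; letting $\theta\uparrow\b$ yields $(\cM f)(s)=O(e^{-(\b-\e)t})$ as $t\to+\oo$, and letting $\theta\downarrow-\a$ yields $O(e^{(\a-\e)t})$ as $t\to-\oo$, uniformly on interior substrips.

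For \emph{Parseval's formula}, substitute the Mellin inversion formula $f(x)=\frac{1}{2\pi i}\int_{c-i\oo}^{c+i\oo}x^{-s}(\cM f)(s)\,ds$ into the left-hand side and interchange the two integrations (legitimate by absolute convergence on the common strip, which uses the decay just established), obtaining $\int_0^\oo f(x)g(x)x^{z-1}\,dx=\frac{1}{2\pi i}\int_{c-i\oo}^{c+i\oo}(\cM f)(s)\Big(\int_0^\oo g(x)x^{(z-s)-1}\,dx\Big)ds=\frac{1}{2\pi i}\int_{c-i\oo}^{c+i\oo}(\cM f)(s)\,(\cM g)(z-s)\,ds$. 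Specializing to $z=1$, $g=\overline{f}$, and the line $\Re(s)=\tfrac12$, and using $(\cM\overline{f})(1-s)=\overline{(\cM f)(1-\overline{s})}$, which equals $\overline{(\cM f)(s)}$ on that line, collapses the right-hand side to $\frac{1}{2\pi}\int_{-\oo}^\oo|(\cM f)(\tfrac12+it)|^2\,dt$; this is the stated $L^2$ identity (with $\s=\tfrac12$, the general case following after replacing $f$ by $x^{\s-1/2}f$).

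The routine ingredients are the elementary pointwise estimates and the explicit integrals of power functions. The two genuine technical points are: rotating the contour in the growth estimate, where the sector hypothesis and the uniform growth bound are essential for controlling the arcs at $0$ and $\oo$; and the Fubini interchange in Parseval's formula, which needs the iterated integral to be absolutely convergent and is exactly the place where the exponential decay of $\cM f$ and $\cM g$ from the growth part is invoked. I expect the contour rotation to be the main obstacle to carry out with full rigor.
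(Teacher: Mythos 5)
The paper offers no proof of this proposition --- it is quoted as standard background with a pointer to \cite{PK} --- so there is nothing internal to compare against; your argument is the standard one from that reference (splitting the integral at $x=1$, subtracting partial sums of the asymptotic expansion to continue meromorphically, rotating the contour within the sector of holomorphy, and Fubini combined with Mellin inversion for Parseval), and it is correct. The only points worth tightening are the ones you already flag yourself --- the vanishing of the arc contributions at $0$ and $\infty$ when rotating the contour, and the absolute convergence needed for the Fubini interchange --- together with your accurate observation that the displayed $L^2$ identity holds literally only for $\sigma=\tfrac12$.
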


Throughout the paper, we will restrict to a special class of functions that is dense in $L^2(\R)$.
\begin{Def}\label{cW} Let $\cW$ denote the finite $\C$-linear combinations of functions of the form
\Eq{e^{-Ax^2+Bx}P(x)} where $P(x)$ is a polynomial in $x$, $A\in
\R_{>0}$ and $B\in\C$.
\end{Def}

\begin{Prop}\label{MTprop} We have the following properties for $\cW$:
\begin{enumerate}[(a)]
\item Every function $f(z)\in \cW$ is entire analytic in $z$, and $F_y(x):=f(x+iy)$ is of rapid decay in $x$.

\item The space $\cW$ is closed under Fourier transform.

\item $\cW$ is dense in $L^2(\R)$.

\item \cite[Lemma 7.2]{Sch} $\cW$ is a core for the unbounded operator $e^{\a x}$ and $e^{\b p}$ on $L^2(\R)$ where $\a,\b\in\R$ and $p=\frac{1}{2\pi i}\frac{d}{dx}$.
\end{enumerate}
\end{Prop}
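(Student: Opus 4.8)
The plan is to establish the four assertions essentially independently, since each reduces to an elementary property of Gaussian-times-polynomial functions. For (a), I would note that $e^{-Az^2+Bz}$ is entire and that multiplication by a polynomial preserves entireness, so every $f\in\cW$ extends to an entire function. For the decay of $F_y(x)=f(x+iy)$, expand $-A(x+iy)^2=-Ax^2+Ay^2-2iAxy$ and pass to moduli: $|F_y(x)|\le e^{-Ax^2}\,e^{Ay^2+\Re(B)x-\Im(B)y}\,|P(x+iy)|$, and since $A>0$ the factor $e^{-Ax^2}$ dominates the remaining exponential, which is only linear in $x$, as well as the polynomial factor. Hence $x^NF_y(x)\to0$ as $|x|\to\oo$ for every $N$, uniformly for $y$ in compact sets, which is the claimed rapid decay.

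For (b) it suffices to treat a single term $e^{-Ax^2+Bx}P(x)$. Completing the square shows that the Fourier transform of the Gaussian $e^{-Ax^2+Bx}$ is again of the form $c\,e^{-A'\xi^2+B'\xi}$ with $A'>0$ (for the convention $\cF f(\xi)=\int e^{-2\pi i x\xi}f(x)\,\d x$ compatible with $p=\frac{1}{2\pi i}\frac{\d}{\d x}$, one gets $A'=\pi^2/A$) and $B'\in\C$, the contour shift needed to absorb the complex linear term $Bx$ being justified by the decay from (a). Since the Fourier transform converts multiplication by $x$ into a constant multiple of $\frac{\d}{\d\xi}$, the transform of $e^{-Ax^2+Bx}P(x)$ is a polynomial differential operator applied to a Gaussian, hence again a Gaussian times a polynomial. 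Therefore $\cF(\cW)\subseteq\cW$; since $\cF\inv$ has the same shape, $\cF\inv(\cW)\subseteq\cW$ as well, giving equality.

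For (c) I would invoke the Hermite functions $h_n(x)=H_n(x)e^{-x^2}$, with $H_n$ the $n$-th Hermite polynomial: these lie in $\cW$ (take $A=1$, $B=0$), their span equals the span of $\{x^ne^{-x^2}\}_{n\ge0}$, and it is classical that this span is dense in $L^2(\R)$; since $\cW$ contains it, $\cW$ is dense. For (d) the statement is \cite[Lemma 7.2]{Sch}, so it may simply be cited, but the reasoning is short: by (a) every $f\in\cW$ lies in the domain of the multiplication operator $e^{\a x}$, and a truncation-and-mollification argument approximating an arbitrary domain element in the graph norm of $e^{\a x}$ by elements of $\cW$ shows $\cW$ is a core. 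For $e^{\b p}$ with $p=\frac{1}{2\pi i}\frac{\d}{\d x}$, conjugating by the Fourier transform turns $e^{\b p}$ into multiplication by $e^{\b\xi}$, and because $\cW$ is Fourier-invariant by (b), being a core for $e^{\b p}$ reduces to the multiplication-operator case.

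The only step requiring genuine care is the core property in (d): for a general $f$ in the maximal domain of $e^{\a x}$ one must produce $f_n\in\cW$ with $f_n\to f$ and $e^{\a x}f_n\to e^{\a x}f$ in $L^2(\R)$, which one does by cutting $f$ off to a bounded interval, convolving with a narrow Gaussian, and expanding the result in the Gaussian-times-polynomial basis; this is precisely the content of the cited lemma of Schm\"udgen. Everything else amounts to routine Gaussian computations, so I would keep those brief.
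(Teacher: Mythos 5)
Your proposal is correct. The paper in fact states this proposition without proof, citing \cite[Lemma 7.2]{Sch} only for part (d), and your arguments are exactly the standard ones that fill the gap: entireness and Gaussian domination for (a), the completed-square computation plus $x\leftrightarrow\frac{\d}{\d\xi}$ for (b), density of the span of $\{x^n e^{-x^2}\}$ for (c) (a minor quibble: the usual Hermite functions carry the weight $e^{-x^2/2}$, but the density of $\mathrm{span}\{x^n e^{-x^2}\}$ is what you actually use and it holds by the standard vanishing-of-Taylor-coefficients argument), and Fourier conjugation to reduce $e^{\b p}$ to the multiplication case for (d). The only step with real content, the graph-norm approximation establishing the core property, you correctly identify and defer to Schm\"udgen's lemma, which is precisely what the paper does.
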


Under the Mellin transform, the representations $R_\l$ can be expressed by the following:
\begin{Prop} \cite[V.1]{Vi} The action of the $ax+b$ group on $\cW\sub L^2(\R)$ is given by
\Eq{ R_\l(g)F(w)=\int_{\R+i0} K(w,z;g)F(z)dz} where
\Eq{K(w,z;g)=\frac{\G(iw-iz)a^{-iw}}{2\pi}\left(-\frac{\l
b}{a}\right)^{iz-iw}.}

Similarly, the left action of the transposed group will be given by

\Eq{R_\l(g^T)F(w)=\int_{\R+i0} K(w,z;g)F(z)dz} where

\Eq{K(w,z;g)=\frac{\G(iw-iz)a^{iw}}{2\pi}(\l b)^{iz-iw}.}

Here the branch of the factor is chosen so that $|\arg(-\l b)|<\pi$ and the contour of integration goes above the pole at $z=w$.
\end{Prop}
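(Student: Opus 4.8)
The plan is to pass to the Mellin picture and reduce the entire statement to one elementary integral. Transport $R_\l$ from $L^2(\R_+,\tfrac{dx}{x})$ to $L^2(\R)$ through the Mellin transform $f\mapsto F$, $F(w):=(\cM f)(iw)$, which along the critical line is a constant multiple of a unitary isomorphism; the asserted formula is then exactly the expression of $\cM\,R_\l(g)\,\cM^{-1}$ on the dense subspace $\cW$. Since $R_\l(g)f(x)=e^{\l bx}f(ax)$ factors as the dilation $f(x)\mapsto f(ax)$ followed by the multiplication operator $f(x)\mapsto e^{\l bx}f(x)$, it suffices to compute the Mellin transform of each and then compose.

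The dilation is immediate: $\cM[f(a\,\cdot)](s)=a^{-s}(\cM f)(s)$ by the substitution $x\mapsto x/a$. For the multiplication operator I would invoke the multiplicative convolution (product) theorem for the Mellin transform --- the multiplicative analogue of Parseval's formula recorded in Proposition~\ref{MTpole} --- which reduces everything to the classical integral $\cM[e^{-\mu x}](s)=\int_0^\oo x^{s-1}e^{-\mu x}\,dx=\G(s)\mu^{-s}$. This holds first for $\Re\mu>0$, and since for $0<\Re s<1$ the integral still converges as an improper oscillatory integral (one integration by parts near $x=\oo$), it extends to $\Re\mu=0$ with the branch fixed by $|\arg\mu|\le\tfrac{\pi}{2}<\pi$. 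Taking $\mu=-\l b$ and composing with the dilation gives $\cM[R_\l(g)f](s)=\tfrac{1}{2\pi i}\int\G(s-z)(-\l b)^{z-s}a^{-z}(\cM f)(z)\,dz$; parametrizing the vertical contour by $z=iu$ and setting $s=iw$ turns $\tfrac{dz}{2\pi i}$ into $\tfrac{du}{2\pi}$ and yields
\[ \cM[R_\l(g)f](iw)=\frac{1}{2\pi}\int\G(iw-iu)\,(-\l b)^{iu-iw}\,a^{-iu}\,F(u)\,du . \]
Finally $a^{-iu}=a^{-iw}a^{iw-iu}$ gives $(-\l b)^{iu-iw}a^{-iu}=a^{-iw}\bigl(-\tfrac{\l b}{a}\bigr)^{iu-iw}$, so the integrand equals $K(w,u;g)F(u)$, which (after renaming $u$ as $z$) is the claimed formula. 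The transposed case will come out of the identical computation applied to $R_\l(g^T)f(x)=e^{-\l cx/a}f(a^{-1}x)$, or formally from the case of $g$ via $a\mapsto a^{-1}$, $b\mapsto -c/a$.

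The step that requires genuine care --- and which I expect to be the main obstacle --- is the analytic bookkeeping making this rigorous. First, the interchange of the convolution integral with the Mellin inversion integral (Fubini) must be justified: one uses that $F=\cM f\in\cW$ is entire and of rapid decay along horizontal lines (Proposition~\ref{MTprop}(a), equivalently the growth estimate of Proposition~\ref{MTpole}) together with the exponential decay of $\G(iw-iu)$ in the imaginary direction from Stirling's formula, so that once the $u$-contour is shifted into the strip $0<\Re(iw-iu)<1$ --- i.e. placed just above the point $u=w$, hence above all the poles $u=w,\,w-i,\,w-2i,\dots$ of $\G(iw-iu)$ --- the double integral converges absolutely. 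Second, one checks by Cauchy's theorem, using the same decay, that the result is independent of the admissible contour; this is precisely why the prescriptions ``the contour goes above the pole at $z=w$'' and $|\arg(-\l b)|<\pi$ are exactly the data pinning down $K(w,z;g)$, and the same remarks apply verbatim to the transposed kernel.
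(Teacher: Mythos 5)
The paper offers no proof of this Proposition at all --- it is quoted verbatim from Vilenkin \cite[V.1]{Vi} --- so there is nothing internal to compare against; your derivation is the standard one and is correct. The reduction to $\cM[e^{-\mu x}](s)=\G(s)\mu^{-s}$ via the Mellin convolution theorem, the bookkeeping $(-\l b)^{iu-iw}a^{-iu}=a^{-iw}\bigl(-\tfrac{\l b}{a}\bigr)^{iu-iw}$, the location of the poles of $\G(iw-iz)$ at $z=w-in$ (so that a contour just above $z=w$ clears all of them), and the specialization $a\mapsto a^{-1}$, $b\mapsto -c/a$ for the transposed group all check out, and the analytic caveats you flag (oscillatory convergence at $\Re\mu=0$, Fubini against the rapid decay of $F$ and the Stirling decay of $\G(i\xi)$) are exactly the right ones.
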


\subsection{Tensor product decomposition}\label{sec:ax+b:tensor}
Using the above expressions, we can construct explicit intertwiners for the tensor product decomposition of the irreducible representation $R_+,R_-$ and $T_\rho$:
\begin{Thm}

(a) We have \Eq{R_\pm\ox R_\pm \simeq L^2(\R^+, \frac{d\a}{\a})\ox R_\pm,}
where the unitary isomorphism is given by
\begin{eqnarray}
F(\a,x)&:=&f(\frac{\a x }{\a+1}, \frac{x}{\a+1}),\\
f(x_1,x_2)&:=&F(\frac{x_1}{x_2}, x_1+x_2).
\end{eqnarray}
(This formula also holds for $R_\l\ox R_\l$ for all $\l\in\C$.)

(b) We have \Eq{R_\pm\ox R_\mp \simeq L^2(\R_{<1}, \frac{d\a}{\a})\ox
R_\mp\o+ L^2(\R_{>1}, \frac{d\a}{\a})\ox R_\pm,} where the unitary
isomorphism is given by
\begin{eqnarray}
F(\a,x)&:=&f(\frac{\a x }{|\a-1|}, \frac{x}{|\a-1|}),\\
f(x_1,x_2)&:=&F(\frac{x_1}{x_2}, |x_1-x_2|).
\end{eqnarray}

(c) We have \Eq{R_\pm \ox T_\rho \simeq R_\pm,} where the unitary isomorphism
is given by
\begin{eqnarray}
F(w)&:=&f(w-\rho),\\
f(x)&:=&F(x+\rho)
\end{eqnarray}
in the space of the Mellin transform of $R_\pm$.
\end{Thm}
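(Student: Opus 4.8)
The plan is to verify, for each of the three cases, that the stated map $f \leftrightarrow F$ is a well-defined unitary isomorphism of Hilbert spaces and that it intertwines the two group actions. The key observation is that all three maps are nothing but changes of variables, so the ``unitary'' part is a Jacobian computation and the ``intertwining'' part is the associativity/compatibility of the affine action with that change of variables.

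For part (a): the representation $R_\l \ox R_\l$ acts on $L^2(\R_+ \x \R_+, \frac{dx_1}{x_1}\frac{dx_2}{x_2})$ by $(R_\l(g)\ox R_\l(g)) f(x_1,x_2) = e^{\l b(x_1+x_2)} f(a x_1, a x_2)$, using $\D(A)=A\ox A$, $\D(B)=B\ox A+1\ox B$ exponentiated (so that $B\ox A + 1\ox B$ acts as multiplication by $x_1 + x_2$ — here I would be careful to record which tensor leg carries which, matching the coproduct). Setting $\a = x_1/x_2$ and $x = x_1 + x_2$, I would first check that $(x_1,x_2)\mapsto(\a,x)$ is a bijection $\R_+^2 \to \R_+ \x \R_+$ with inverse $x_1 = \frac{\a x}{\a+1}$, $x_2 = \frac{x}{\a+1}$; then compute the Jacobian and confirm $\frac{dx_1}{x_1}\frac{dx_2}{x_2} = \frac{d\a}{\a}\frac{dx}{x}$, which is the statement that the map is unitary onto $L^2(\R_+,\frac{d\a}{\a})\ox L^2(\R_+,\frac{dx}{x})$. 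Finally, since under $g$ both $x_1,x_2$ scale by $a$ while the additive factor depends only on $x = x_1+x_2$, the variable $\a$ is invariant and $x$ transforms exactly as in a single copy of $R_\l$; hence the action becomes $\mathrm{id}\ox R_\l$. Part (b) is identical except one stratifies $\R_+^2$ according to $x_1 \gtrless x_2$, i.e. $\a \gtrless 1$, uses $|x_1-x_2|$ and $|\a-1|$, and tracks the sign: on $\{x_1 > x_2\}$ the additive generator $x_1 - x_2 = \pm x$ carries a sign that flips $R_\pm \leftrightarrow R_\mp$ (this is where the opposite-sign combination $R_\pm \ox R_\mp$ enters), so one gets the claimed direct sum decomposition over the two strata. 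Part (c) is the easiest: $T_\rho$ is one-dimensional, acting by $a^{i\rho}$, so on the Mellin side $(\cM f)(w)$ the tensor factor $a^{i\rho} = a^{iw}/a^{i(w-\rho)}$ just shifts the spectral parameter $w \mapsto w - \rho$; I would check that $w\mapsto f(w-\rho)$ is unitary on $L^2$ of the appropriate vertical line and intertwines via the explicit kernel $K(w,z;g)$ from the previous Proposition, using $\G(iw-iz) = \G(i(w-\rho)-i(z-\rho))$ and the matching powers of $a$.

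The routine but slightly delicate point throughout is bookkeeping: getting the branch of $(-\l b/a)^{iz-iw}$ and the contour ``above the pole at $z=w$'' to be preserved under the change of variables in the Mellin picture, and in part (b) being careful that the two strata $\R_{<1}$ and $\R_{>1}$ really do reassemble into the orthogonal direct sum with no boundary contribution from $\a = 1$ (the set $x_1 = x_2$ has measure zero, so this is harmless, but it should be noted). I expect the main obstacle to be part (b): correctly identifying which of $R_\pm$ appears on which stratum requires tracking the sign of $x_1 - x_2$ through the exponential $e^{\l b(x_1-x_2)}$ and matching it against the normalization $R_+ = R_{-i}$, $R_- = R_i$, and one must also confirm the decomposition is genuinely into \emph{irreducibles} tensored with multiplicity spaces rather than something coarser — this follows because on each stratum $\a$ is a complete invariant for the diagonal scaling action and the residual action on $x$ is the standard irreducible one. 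Everything else reduces to the Jacobian identity $\frac{dx_1}{x_1}\frac{dx_2}{x_2} = \frac{d\a}{\a}\frac{dx}{x}$ and elementary manipulations with the affine group law recorded at the start of Section~\ref{sec:ax+b}.
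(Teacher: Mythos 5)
Your proposal is correct and follows essentially the same route as the paper: a direct change of variables $(x_1,x_2)\mapsto(\a,x)$, with unitarity reduced to the Jacobian identity $\frac{dx_1}{x_1}\frac{dx_2}{x_2}=\frac{d\a}{\a}\frac{dx}{x}$ (the paper phrases this as a norm computation), the intertwining checked by noting $\a$ is scale-invariant while the exponential factor depends only on $x_1+x_2$ (resp.\ $x_1-x_2$, whose sign stratifies $\a\gtrless 1$ in part (b)), and part (c) handled by a shift of the Mellin variable against the explicit kernel $K(w,z;g)$. No substantive differences.
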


\begin{proof}
Let us prove (a) for the case $R_+$, while the case for $R_-$ is similar. First of all it is obvious that the maps given are inverse of each other. To check that they are intertwiners, we compare the actions on the two spaces:
\begin{eqnarray*}
R_+(g)\cdot F(\a,x)&=&R_+(g)\cdot f(\frac{\a x}{\a+1},\frac{x}{\a+1})\\
&=&e^{-ibx} f(\frac{\a ax}{\a+1},\frac{ax}{\a+1})\\
&=&e^{-ib(x_1+x_2)}f(\frac{(\frac{x_1}{x_2} a (x_1+x_2)}{\frac{x_1}{x_2}+1},\frac{a(x_1+x_2)}{\frac{x_1}{x_2}+1})\\
&=&e^{-ib x_1}e^{-ib x_2}f(ax_1,ax_2)\\
&=&R_+\ox R_+(g)\cdot f(x_1,x_2).
\end{eqnarray*}
Finally to check that it is unitary, we compute the norm after transformation:
\begin{eqnarray*}
||F(\a,x)||^2&=&\iint |f(\frac{\a x}{\a+1},\frac{x}{\a+1})|^2 \frac{dx}{x}\frac{d\a}{\a}\\
&=&\iint |f(\a x_2,x_2)|^2 \frac{dx_2}{x_2}\frac{d\a}{\a}\\
&=&\iint |f(x_1,x_2)|^2 \frac{dx_2}{x_2}\frac{d x_1}{x_1}\\
&=&||f(x_1,x_2)||^2.
\end{eqnarray*}

For (b) the argument is similar, where we split into the case $\a<1$ and $\a>1$:
\begin{eqnarray*}
R_+\ox R_-(g)\cdot f(x_1,x_2)&=&e^{-ibx_1}e^{ibx_2}f(ax_1,ax_2)\\
&=&e^{-ibx_1}e^{ibx_2}F(\frac{x_1}{x_2},a|x_1-x_2|)\\
&=&e^{-ib\frac{\a x}{|\a-1|}}e^{ib\frac{x}{|\a-1|}}F(\a,ax)\\
&=&\left\{\begin{array}{cc} e^{-ibx}F(\a, ax)& \a>1 \\e^{ibx}F(\a,ax)& \a<1 \end{array}\right..\\
\end{eqnarray*}
as required.

Finally for (c) we use the Mellin transform expression to obtain:
\begin{eqnarray*}
R_+\ox T_\rho (g)\cdot F(w)&=&\frac{a^{i\rho}}{2\pi}\int_{-\oo}^{\oo} \G(iw-iz)a^{-iw}\left(\frac{ib}{a}\right)^{iz-iw}F(z)dz\\
&=&\frac{a^{i\rho}}{2\pi}\int_{-\oo}^{\oo} \G(iw-iz-i\rho)a^{-iw}\left(\frac{ib}{a}\right)^{iz-iw+i\rho}F(z+\rho)dz\\
&=&\frac{1}{2\pi}\int_{-\oo}^{\oo} \G(ix-iz)a^{-ix}\left(\frac{ib}{a}\right)^{iz-ix}f(z)dz\\
&=&R_+(g)\cdot f(x).
\end{eqnarray*}
\end{proof}

We will focus mainly on the case $R_+\ox R_+$. Under the Mellin transform, we can rewrite the intertwiners in terms of gamma functions as follow.

\begin{Prop}\label{classical intertwiner}Let $f(\l,t)\in \cW\ox\cW\sub L^2(\R)\ox R_+$ and $f(t_1,t_2)\in \cW\ox\cW\sub R_+\ox R_+$ where $R_+=L^2(\R)$ in the Mellin transformed picture. Then the isomorphism $R_+\ox R_+\simeq  L^2(\R)\ox R_+$ can be expressed as
\begin{eqnarray}
F(\l,t)&=&\frac{1}{2\pi}\int_{C}\frac{\G(it_2-it+i\l)\G(-it_2-i\l)}{\G(-it)}f(t-t_2,t_2)dt_2\\
f(t_1,t_2)&=&\frac{1}{2\pi}\int_{C'}\frac{\G(-i\l+it_1)\G(i\l+it_2)}{\G(it_1+it_2)}F(\l,t_1+t_2)d\l
\end{eqnarray}
where $C$ is the contour going along $\R$ that goes above the poles
of $\G(-it_2-i\l)$ and below the poles of $\G(it_2-it+i\l)$, and
similarly $C'$ is the contour along $\R$ that goes above the poles
of $\G(-i\l+it_1)$ and below the poles of $\G(i\l+it_2)$.

Hence formally we can write the above transforms as integral transformations
\begin{eqnarray}
F(\l, t)&=&\iint_{\R^2} \ffloor{\l&t\\t_1&t_2}f(t_1,t_2)dt_1dt_2\\
f(t_1, t_2)&=&\iint_{\R^2}\cceil{\l&t\\t_1&t_2}F(\l,t)d\l dt
\end{eqnarray}
where
\begin{eqnarray}
\ffloor{\l&t\\t_1&t_2}&=&\frac{1}{2\pi}\d(t_1+t_2-t)\frac{\G(i\l-it_1)\G(-it_2-i\l)}{\G(-it)}\\
\cceil{\l&t\\t_1&t_2}&=&\frac{1}{2\pi}\d(t-t_1-t_2)\frac{\G(-i\l+it_1)\G(it_2+i\l)}{\G(it)}\\
&=&\overline{\ffloor{\l&t\\t_1&t_2}}\nonumber
\end{eqnarray}
\end{Prop}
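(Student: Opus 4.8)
The plan is to read off both integral formulas directly from the substitution formulas of the previous theorem by composing with the (double) Mellin transform, the only analytic ingredient being the classical Mellin--Barnes beta integral quoted in the introduction, $(x+y)^{it}=\frac{1}{2\pi}\int_C \frac{\G(-is)\G(-it+is)}{\G(-it)}\,x^{is}y^{it-is}\,ds$ (equivalently $\int_0^\oo u^{a-1}(1+u)^{-a-b}\,du=\G(a)\G(b)/\G(a+b)$ in Mellin--Barnes form), where $C$ runs along $\R$ separating the poles of $\G(-is)$ from those of $\G(-it+is)$, and the branch of $x^{is}y^{it-is}$ is the principal one.

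For the first formula I would start from $F(\a,x)=f\!\left(\frac{\a x}{\a+1},\frac{x}{\a+1}\right)$, take the Mellin transform in $\a$ and in $x$, and change variables to $x_1=\frac{\a x}{\a+1}$, $x_2=\frac{x}{\a+1}$; the multiplicative Jacobian identity $\frac{d\a}{\a}\frac{dx}{x}=\frac{dx_1}{x_1}\frac{dx_2}{x_2}$ already verified in the unitarity computation turns the Mellin transform of $F$ into $\iint (x_1/x_2)^{i\l}(x_1+x_2)^{it}\,f(x_1,x_2)\frac{dx_1}{x_1}\frac{dx_2}{x_2}$. Expanding $(x_1+x_2)^{it}$ by the beta integral and interchanging the order of integration, the inner double integral becomes the Mellin transform of $f$ restricted to the hyperplane $t_1+t_2=t$ — this constraint being forced by homogeneity, since a joint rescaling of $x_1,x_2$ acts with weight $t$ on $F$ and with total weight $t_1+t_2$ on $f$ — while the remaining spectral integral carries precisely the kernel $\frac{1}{2\pi}\frac{\G(i\l-it_1)\G(-it_2-i\l)}{\G(-it)}$ with $t_2=t-t_1$. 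Re-expressing in the variable $t_2$ gives the stated formula for $F(\l,t)$, and the delta-function form $\ffloor{\l&t\\t_1&t_2}$ is just this one-dimensional integral rewritten as a two-dimensional integral over $\R^2$ against $\d(t_1+t_2-t)$. The second formula is entirely parallel: starting from $f(x_1,x_2)=F\!\left(\frac{x_1}{x_2},x_1+x_2\right)$, applying the Mellin transform in $x_1,x_2$, substituting $\a=x_1/x_2$, $x=x_1+x_2$, and expanding the resulting factor $(1+\a)^{i(t_1+t_2)}$ by the same beta integral produces the kernel $\cceil{\l&t\\t_1&t_2}$.

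The point that I expect to require the most care is the placement of the contours $C$ and $C'$: the beta-integral representation is valid only along a contour separating the two towers of poles of $\G(-is)$ and $\G(-it+is)$, and one must follow this separation condition through the two changes of variable to confirm that it becomes exactly ``the contour runs along $\R$, above the poles of $\G(-it_2-i\l)$ and below the poles of $\G(it_2-it+i\l)=\G(i\l-it_1)$'', and the analogous statement for $C'$. This also presupposes that the relevant Mellin transforms are holomorphic and of rapid decay in a strip about $\R$, which is guaranteed here because we work in the Mellin-transformed picture with functions in $\cW\ox\cW$, entire and of rapid decay on horizontal lines by Proposition~\ref{MTprop}, so that all the integrals converge absolutely, Fubini is legitimate, and the contours may be slid onto $\R+i0$. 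Finally, the identity $\cceil{\l&t\\t_1&t_2}=\overline{\ffloor{\l&t\\t_1&t_2}}$ follows at once from $\overline{\G(iz)}=\G(-i\bar z)$, which flips each $i$ to $-i$ in the gamma factors, together with the reality of $\frac{1}{2\pi}\d(t_1+t_2-t)$; it is in any case forced, since the two maps are mutually inverse unitaries and hence have kernels that are complex conjugates of one another in the $L^2$-pairing.
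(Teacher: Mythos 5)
Your proposal is correct and follows essentially the same route as the paper: both reduce the claim to the Euler beta integral (in your case its Mellin--Barnes form, which the paper itself notes in Remark~\ref{qbirem} is the same identity) after the change of variables $(\a,x)\leftrightarrow(x_1,x_2)$, and both fix the contour by the same pole-separation condition. The only organizational difference is that you apply the forward Mellin transform to $F$ and expand the binomial factor, whereas the paper pushes the inverse Mellin representation of $f$ through the substitution and evaluates the $\a$-integral by the beta integral directly, continuing analytically back to real $t$ at the end --- the same computation run in the opposite direction.
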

\begin{proof}
We start with $$\int_{\R+ic_2}\int_{\R+ic_1} x_1^{-it_1}x_2^{-it_2}f_1(t_1)f_2(t_2)dt_1dt_2,$$
 and transform into
$$\int_{\R+ic_2}\int_{\R+ic_1} \left(\frac{\a x}{\a+1}\right)^{-it_1}\left(\frac{x}{\a+1}\right)^{-it_2}f_1(t_1)f_2(t_2)dt_1dt_2,$$
and Mellin transform back to the $(\a,t)$ space:
\begin{eqnarray}
F(\l,t)&=&\frac{1}{(2\pi)^2}\iint_{\R_+^2}\int_{\R+ic_2}\int_{\R+ic_1} x^{it-1}\a^{i\l-1}\left(\frac{\a x}{\a+1}\right)^{-it_1}\left(\frac{x}{\a+1}\right)^{-it_2}\cdot\nonumber\\
\label{MT1}&&f_1(t_1)f_2(t_2) dt_1dt_2dxd\a\\
&=&\frac{1}{(2\pi)^2}\iint_{\R_+^2}\int_{\R+ic_2}
x^{it-1}\a^{i\l-1}\left(\frac{x}{\a+1}\right)^{-it_2}\cM\inv\cdot\nonumber\\
&&f_1(\frac{\a x}{\a+1})f_2(t_2)dt_2dx d\a.\nonumber
\end{eqnarray}
From the Mellin transform properties (Proposition \ref{MTprop}), $\cM\inv
f_1(\frac{\a x}{\a+1})$ is of rapid decay in $x$. Hence the
integrand is absolutely convergent with respect to $x$ and $t_2$ and
we can interchange the order of integration in $\eqref{MT1}$ to
obtain
\begin{eqnarray*}
F(\l,t)&=&\frac{1}{(2\pi)^2}\iint_{\R_+^2}\int_{\R+ic_2}\int_{\R+ic_1} x^{it-1}\a^{i\l-1}\left(\frac{\a x}{\a+1}\right)^{-it_1}\left(\frac{x}{\a+1}\right)^{-it_2}\cdot\\
&&f_1(t_1)f_2(t_2) dt_1dxdt_2d\a\\
&=&\frac{1}{(2\pi)^2}\int_{\R_+}\int_{\R+ic_2}\int_{\R_+}\int_{\R+ic_1}  x^{it-it_1-it_2-1}\a^{i\l-it_1-1}(\a+1)^{it_1+it_2}\cdot\\
&&f_1(t_1)f_2(t_2) dt_1dxdt_2d\a\\
&=&\frac{1}{2\pi}\int_0^\oo\int_{\R+ic_2} \a^{i\l-it+it_2-1}(\a+1)^{it}f_1(t-t_2)f_2(t_2) dt_2d\a\\
\end{eqnarray*}
by the Mellin transform property.

Next from the gamma-beta integral \cite[V.1.6(7)]{Vi}, we have
\Eq{\frac{\G(w+u)\G(-u)}{\G(w)}=\int_0^\oo t^{w+u-1}(1+t)^{-w}dt,}
where $\Re(w+u)>0, \Re(u)<0$. Assuming $\l\in\R$, we see that the
integrand is absolutely convergent in $\a$ when
$$\Re(it_2-it)>0,\tab \Re(it_2)<0.$$
Hence for $c_2>0$ and $\Im(t)>c_2$, we can interchange the order of integration to obtain
\begin{eqnarray}
F(\l,t)\nonumber &=&\frac{1}{2\pi}\int_{\R+ic_2}\int_0^\oo \a^{i\l-it+it_2-1}(\a+1)^{it}f_1(t-t_2)f_2(t_2) d\a dt_2\\
&=&\frac{1}{2\pi}\int_{\R+ic_2}\frac{\G(it_2-it+i\l)\G(-it_2-i\l)}{\G(-it)}f(t-t_2,t_2)dt_2,
\end{eqnarray}
which holds for $\Im(t)>c_2>0$.
Finally we can deform the contour of $t_2$ so that it goes under $t_2=t-\l$ and above $t_2=-\l$. Then the above expression can be analytically extended to $\Im(t)=0$, and we obtain our desired formula.

Similarly, we start with
$$\int_{\R+ic_\l}\int_{\R+ic_t}\a^{-i\l}x^{-it}F_\l(\l)F_t(t)dtd\l,$$
and transform into
$$\int_{\R+ic_\l}\int_{\R+ic_t}\left(\frac{x_1}{x_2}\right)^{-i\l}(x_1+x_2)^{-it}F_\l(\l)F_t(t) dtd\l,$$
and Mellin transform back to the $(t_1,t_2)$ space:
\begin{eqnarray}
\nonumber f(t_1,t_2)&=&\frac{1}{(2\pi)^2}\iint_{\R_+^2}\int_{\R+ic_\l}\int_{\R+ic_t}x_1^{it_1-1}x_2^{it_2-1}\left(\frac{x_1}{x_2}\right)^{-i\l}(x_1+x_2)^{-it}\\
         &&F_\l(\l)F_t(t) dtd\l dx_2dx_1\\
\nonumber&&\mbox{(replacing $x_1$ by $x_1x_2$:) }\\
\nonumber&=&\frac{1}{(2\pi)^2}\iint_{\R_+^2}\int_{\R+ic_\l}\int_{\R+ic_t}x_2^{it_1+it_2-it-1}x_1^{it_1-i\l-1}(x_1+1)^{-it}\cdot\\
\nonumber&& F_\l(\l)F_t(t)dtd\l dx_2dx_1.
\end{eqnarray}
By the same arguments, we can interchange the order of integration with respect to $d\l$ and $dx_2$, and involve the Mellin transform in $x_2$ and $t$, to obtain
$$=\frac{1}{2\pi}\int_0^\oo\int_{\R+ic_\l}x_1^{it_1-i\l-1}(x_1+1)^{-it-it_2} F_\l(\l)F_t(t_1+t_2)d\l dx_1\\$$
Finally, assuming $\t_1\in\R$, the integrand is absolutely
convergent when
$$\Re(-i\l)>0,\tab\Re(-i\l-it_2)<0.$$
Hence for $0<c_\l<-\Im(t_2)$ we can interchange the order of
integration, and obtain
\Eq{f(t_1,t_2)=\frac{1}{2\pi}\int_{\R+ic_\l}\frac{\G(-i\l+it_1)\G(i\l+it_2)}{\G(it_1+it_2)}F(\l,t_1+t_2)d\l.}
Again by shifting the contours for $\l$ so that it goes above
$\l=-t_2$ and below $\l=t_1$, the expression can be analytically
extended to $\Im(t_2)=0$, and we obtain the desired formula.
\end{proof}
These expressions will play an important role in the comparison with the quantum case.

\section{$q$-Special Functions}\label{sec:special}
\subsection{Definitions}\label{sec:special:def}
Throughout this section, we let $q=e^{\pi i b^2}$ where $b\in \R\setminus\Q$ and $0<b^2<1$, so that $|q|=1$ is not a root of unity.

We will consider the quantum dilogarithm $G_b(x)$ defined in
\cite{PT1, PT2} throughout the paper. The reason is that it admits a
nice classical limit towards the gamma function, as will be shown in
the next section, and a lot of classical formula has a straightforward $q$-analogue using $G_b(x)$, where the proofs are nearly
identical. Here we recall its definition.

Let $\w:=(w_1,w_2)\in\C^2$.

\begin{Def}The double zeta function is defined as
\Eq{\ze_2(s,z|\w):=\sum_{m_1,m_2\in\Z_{\geq0}}(z+m_1w_1+m_2w_2)^{-s}.}

The double gamma function is defined as
\Eq{\G_2(z|\w):=\exp\left(\frac{\partial}{\partial
s}\ze_2(s,z|\w)|_{s=0}\right).}

Let
\Eq{\G_b(x):=\G_2(x|b,b\inv,}
then the quantum dilogarithm is defined as the function:
\Eq{S_b(x):=\frac{\G_b(x)}{\G_b(Q-x)}.}

The following form is often useful, and will be used throughout
this paper: \Eq{G_b(x):=e^{\frac{\pi i}{2}x(x-Q)}S_b(x).}
\end{Def}

\begin{Prop} The quantum dilogarithm satisfies the following properties:

Self-duality:
\Eq{S_b(x)=S_{b\inv}(x),\;\;\;\;\;\;G_b(x)=G_{b\inv}(x).}

Functional equations: \Eq{S_b(x+b^{\pm1})=2\sin(\pi
b^{\pm1}x)S_b(x),\;\;\;\;\;\;G_b(x+b)=(1-e^{2\pi ibx})G_b(x).}

Reflection property:
\Eq{S_b(x)S_b(Q-x)=1,\;\;\;\;\;\;G_b(x)G_b(Q-x)=e^{\pi
ix(x-Q)}.}

Complex conjugation: \Eq{\overline{G_b(x)}=e^{\pi i
\bar{x}(Q-\bar{x})}G_b(\bar{x})=\frac{1}{G_b(Q-\bar{x})}.}

Analyticity:

$S_b(x)$ and $G_b(x)$ are meromorphic functions with poles at
$x=-nb-mb\inv$ and zeros at $x=Q+nb+mb\inv$, for $n,m\in\Z_{\geq0}$.

\label{asymp} Asymptotic properties:
\Eq{G_b(x)\sim\left\{\begin{array}{cc}\bar{\ze_b}&\Im(x)\to+\oo\\\ze_b
e^{\pi ix(x-Q)}&\Im(x)\to-\oo\end{array}\right.}
where
\Eq{\ze_b=e^{\frac{\pi i}{4}+\frac{\pi i}{12}(b^2+b^{-2})}.}

\label{residue} Residues:
\Eq{\lim_{x\to 0} xG_b(x)=\frac{1}{2\pi},} or more generally,
\Eq{Res\frac{1}{G_b(Q+z)}=-\frac{1}{2\pi}\prod_{k=1}^n(1-q^{2k})\inv\prod_{l=1}^m(1-\widetilde{q}^{-2l})\inv}
at $z=nb+mb\inv, n,m\in\Z_{\geq0}$ and $\widetilde{q}=e^{-\pi i
b^{-2}}$.
\end{Prop}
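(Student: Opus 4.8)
The statement collects several standard facts about the double sine/double gamma functions, so the plan is to derive each item from the defining integral/zeta representation $\zeta_2(s,z\mid\w)$ and the product formula for $G_b$, treating the functional equation as the workhorse. First I would record the shift relation. Differentiating $\zeta_2(s,z\mid\w) - \zeta_2(s,z+w_1\mid\w) = \sum_{m_2\ge 0}(z+m_2 w_2)^{-s}$ at $s=0$ and exponentiating gives $\Gamma_2(z\mid\w)/\Gamma_2(z+w_1\mid\w)$ in terms of the Hurwitz-zeta derivative, which evaluates to a $\Gamma$-function factor; specializing $\w=(b,b^{-1})$ yields $S_b(x+b) = 2\sin(\pi b x)\,S_b(x)$ and, via $G_b(x)=e^{\frac{\pi i}{2}x(x-Q)}S_b(x)$ together with the elementary identity $e^{\frac{\pi i}{2}((x+b)(x+b-Q)-x(x-Q))} = -e^{\pi i b x}$, the stated $G_b(x+b)=(1-e^{2\pi i b x})G_b(x)$. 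By $b\leftrightarrow b^{-1}$ symmetry (which is itself manifest from the symmetry of $\zeta_2$ in $w_1,w_2$) one gets the $b^{-1}$-shift, hence self-duality $S_b=S_{b^{-1}}$, $G_b=G_{b^{-1}}$.

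Next, the reflection property $S_b(x)S_b(Q-x)=1$ is immediate from the definition $S_b(x)=\Gamma_b(x)/\Gamma_b(Q-x)$, and the $G_b$ version follows by multiplying the two exponential prefactors: $e^{\frac{\pi i}{2}x(x-Q)}e^{\frac{\pi i}{2}(Q-x)(-x)} = e^{\pi i x(x-Q)}$ after simplification. Complex conjugation I would obtain from the fact that $\overline{\zeta_2(s,z\mid\w)} = \zeta_2(\bar s,\bar z\mid\bar\w)$ with real $\w$, giving $\overline{G_b(x)} = e^{\pi i\bar x(Q-\bar x)}G_b(\bar x)$, and then rewriting via the reflection identity to get $1/G_b(Q-\bar x)$. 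For analyticity, the sum $\zeta_2(s,z\mid\w)$ converges for $\Re(s)>2$ and extends meromorphically; the poles and zeros of $S_b$ (hence $G_b$, since the prefactor is entire and nonvanishing) at $x=-nb-mb^{-1}$ and $x=Q+nb+mb^{-1}$ can be read off inductively from the functional equation: the factor $(1-e^{2\pi i b x})$ vanishes exactly at $x\in b^{-1}\Z$, propagating the single pole at $x=0$ (see below) to the full lattice in the lower-left, and the reflection identity moves zeros to the upper-right.

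For the asymptotics and residues I would argue as follows. The residue $\lim_{x\to 0}xG_b(x)=\frac{1}{2\pi}$ comes from the known normalization $\Gamma_2(\w_1+\w_2\mid\w)$-type value, or more concretely from the Fourier/integral representation $\log G_b(x) = $ (an explicit integral) whose residue at $x=0$ is computable directly; the general residue at $z=nb+mb^{-1}$ then follows by iterating $G_b(x+b)=(1-e^{2\pi i b x})G_b(x)$ and its $b^{-1}$-companion, picking up the products $\prod(1-q^{2k})^{-1}$ and $\prod(1-\widetilde q^{-2l})^{-1}$. The asymptotic behavior as $\Im(x)\to\pm\infty$ is obtained from the integral representation of $\log G_b(x)$ by contour-shifting/stationary estimates, isolating the quadratic term $e^{\pi i x(x-Q)}$ in one direction and a constant in the other, with the constant $\zeta_b = e^{\frac{\pi i}{4}+\frac{\pi i}{12}(b^2+b^{-2})}$ pinned down by consistency with the reflection property (apply $G_b(x)G_b(Q-x)=e^{\pi i x(x-Q)}$ and send $\Im(x)\to\infty$: this forces $\zeta_b\bar\zeta_b=1$ and fixes the phase).

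\medskip
\noindent\emph{Main obstacle.} The only genuinely non-formal part is pinning down the exact constants in the asymptotic formula and the general residue — the functional equations and reflection propagate everything once a single normalization is known, but establishing that normalization (the value of $\lim_{x\to0}xG_b(x)$ and the phase $\zeta_b$) requires either citing the known integral representation of $\log G_b$ and doing an honest asymptotic/residue computation, or bootstrapping from the reflection identity together with one boundary value. Everything else is bookkeeping with the shift relations.
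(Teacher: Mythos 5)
The paper itself offers no proof of this Proposition: it is recalled as a list of standing facts about $G_b$ from the quantum-dilogarithm literature (\cite{PT1,PT2}), so there is no in-paper argument to compare against. Your sketch is the standard derivation from the $\zeta_2$-definition and is essentially sound: the telescoping identity $\ze_2(s,z|\w)-\ze_2(s,z+w_1|\w)=\sum_{m_2\ge0}(z+m_2w_2)^{-s}$ together with Lerch's formula $\ze_H'(0,a)=\log(\G(a)/\sqrt{2\pi})$ does give $\G_b(x)/\G_b(x+b)=\frac{\G(bx)}{\sqrt{2\pi}}b^{1/2-bx}$, whence $S_b(x+b)=2\sin(\pi bx)S_b(x)$ via Euler reflection, and the rest (self-duality, reflection, conjugation, location of poles and zeros) follows by the bookkeeping you describe. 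Two small corrections. First, the prefactor identity is $e^{\frac{\pi i}{2}((x+b)(x+b-Q)-x(x-Q))}=e^{\pi i bx-\pi i/2}=-i\,e^{\pi i bx}$, not $-e^{\pi i bx}$; with the correct factor one gets $-ie^{\pi ibx}\cdot 2\sin(\pi bx)=1-e^{2\pi ibx}$ as stated, so the slip is harmless but worth fixing. Second, the consistency check of the reflection formula against the asymptotics only yields $\ze_b\overline{\ze_b}=1$, i.e.\ that $\ze_b$ is a unimodular constant; it does not determine the specific phase $e^{\frac{\pi i}{4}+\frac{\pi i}{12}(b^2+b^{-2})}$. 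You acknowledge this in your closing paragraph, and indeed that constant must come from an honest asymptotic evaluation of the integral representation of $\log G_b$ (or, equivalently, from the infinite-product form of Lemma \ref{infprod} together with the modular behaviour of $(q^2;q^2)_\oo$, which is exactly how the paper later exploits $\ze_b$ in Section \ref{sec:special:limit}). The same remark applies to the normalization $\lim_{x\to0}xG_b(x)=\frac{1}{2\pi}$: your shift relation only propagates this residue to the full lattice, producing the stated products $\prod(1-q^{2k})^{-1}\prod(1-\widetilde q^{-2l})^{-1}$, so the single value at $x=0$ is the one irreducible input. With those two points made explicit, the proposal is a complete and correct proof strategy.
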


Let us introduce another important variant of the quantum dilogarithm function:
\Eq{g_b(x):=\frac{\bar{\ze_b}}{G_b(\frac{Q}{2}+\frac{1}{2\pi i b}\log x)}.}
\begin{Lem} Let $u, v$ be positive self-adjoint operators with $uv=q^2vu$, $q=e^{\pi i b^2}$. Then
\Eq{\label{qexp}g_b(u)g_b(v)=g_b(u+v)}
\Eq{\label{qpenta}g_b(v)g_b(u)=g_b(u)g_b(q\inv uv)g_b(v)}
\eqref{qexp} and \eqref{qpenta} are often referred to as the quantum exponential and the quantum pentagon relations.
\end{Lem}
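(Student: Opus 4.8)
The plan is to reduce both identities to known integral/functional-equation properties of the quantum dilogarithm $G_b$, exploiting the fact that $g_b$ is, up to normalization, the composition $G_b$ with an exponential change of variable. First I would record the Fourier/beta-type integral representation of $g_b$ (the standard one, $g_b(e^{2\pi b x}) = \int_{\R+i0} \frac{G_b(Q+i\tau)}{G_b(Q)} e^{2\pi \tau x}\, \frac{\d\tau}{\text{something}}$, i.e. the Fourier transform of a ratio of $G_b$'s). Writing $u = e^{2\pi b p}$, $v = e^{2\pi b x}$ for self-adjoint $p, x$ with $[x,p] = \frac{1}{2\pi i}$, the relation $uv = q^2 vu$ becomes the Weyl form of the canonical commutation relation, so that everything can be carried out inside the functional calculus of the pair $(x,p)$. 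The key analytic input is the Ramanujan-type integral evaluation (the ``quantum beta integral'', essentially the non-compact $q$-binomial theorem already quoted as Lemma~\ref{qbi} in the introduction), which expresses $\int G_b$-ratios against $e^{2\pi\tau(\cdot)}$ as another $G_b$-ratio.

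For \eqref{qexp}: substitute the integral representations of $g_b(u)$ and $g_b(v)$, so that $g_b(u)g_b(v)$ becomes a double integral $\iint (\cdots) e^{2\pi\tau_1 p} e^{2\pi\tau_2 x}\, \d\tau_1 \d\tau_2$; commute $e^{2\pi\tau_1 p}$ past $e^{2\pi\tau_2 x}$ using the Weyl relation (this produces only a scalar factor $e^{\text{const}\,\tau_1\tau_2}$), then change variables to $\tau = \tau_1 + \tau_2$ and do the inner integral via the quantum beta integral. The result should collapse to the integral representation of $g_b(u+v)$, where $u+v$ is interpreted as the positive self-adjoint operator $e^{2\pi b x} + e^{2\pi b p}$ — here one should cite that this sum is essentially self-adjoint on a natural core (e.g.\ $\cW$, by Proposition~\ref{MTprop}(d)), so the functional calculus $g_b(u+v)$ is unambiguous. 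For \eqref{qpenta}: the cleanest route is to deduce it formally from \eqref{qexp} by a ``three-term'' manipulation — one shows both sides, when paired against test vectors in $\cW\ox\cW$, satisfy the same recursion in one variable coming from the functional equation $G_b(x+b) = (1-e^{2\pi ibx})G_b(x)$, together with matching asymptotics as $\Im(\cdot)\to\pm\infty$ from Proposition~\ref{asymp}; alternatively, one verifies it directly by substituting integral representations and applying the beta integral twice. I would present the functional-equation-plus-asymptotics uniqueness argument, since it avoids a genuinely triple integral.

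The main obstacle is not the formal algebra but the analytic bookkeeping: justifying the interchange of the (iterated, non-absolutely-convergent) integrals and the commutation of unbounded exponentials requires working on the dense core $\cW$, controlling contour positions so that all poles of the $G_b$-ratios stay on the correct side (exactly the kind of contour shift done in the proof of Proposition~\ref{classical intertwiner}), and invoking the exponential decay of $G_b$ along vertical lines (Proposition~\ref{asymp}) to get convergence. A secondary subtlety is the essential self-adjointness and positivity of $u+v$ and of $q^{-1}uv$, needed to make sense of $g_b$ applied to them; for $q^{-1}uv = e^{2\pi b(x+p)} \cdot(\text{phase})$ this is immediate, while for $u+v$ it is the standard fact that a sum of two positive Weyl-pair exponentials is positive essentially self-adjoint. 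Once these domain issues are dispatched, both relations follow from a single application (resp.\ two applications) of the quantum beta integral.
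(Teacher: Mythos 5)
The paper states this lemma without proof --- it is quoted as a known property of the quantum dilogarithm (going back to Faddeev--Kashaev, proved e.g.\ in the appendices of \cite{BT} and \cite{FK}) --- so your sketch can only be measured against the standard argument. For \eqref{qexp} your outline essentially \emph{is} that argument and is sound: the integral representation you want is exactly the paper's Lemma \ref{FT}, namely $g_b(e^{2\pi b r})=\int_{\R+i0} e^{2\pi i tr}\,e^{-\pi i t^2}\,G_b(Q+it)^{-1}\,\mathrm{d}t$ (note that $G_b$ sits in the denominator with a Gaussian prefactor $e^{-\pi i t^2}$, not as the ratio in the numerator you wrote; this prefactor is precisely what makes the cross term $e^{2\pi i\tau_1\tau_2}$ produced by the Weyl commutation combine with the two Gaussians into the kernel of the tau--beta/$q$-binomial integral of Lemmas \ref{tau} and \ref{qbi}, so getting it right is not cosmetic). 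After the substitution $\tau=\tau_1+\tau_2$ the inner integral is Lemma \ref{tau} and the expression collapses to $g_b(u+v)$; the domain and contour issues you flag are genuine and are handled on the core $\cW$ exactly as you describe.

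The weak point is your preferred route to \eqref{qpenta}. A ``same functional equation plus matching asymptotics'' uniqueness argument does not work with only the $b$-difference equation $f(x+b)=(1-e^{2\pi i bx})f(x)$: that equation has an enormous solution space (multiply any solution by a $b$-periodic function), and prescribed asymptotics as $\Im(x)\to\pm\infty$ do not cut it down. To make such an argument rigorous one must invoke \emph{both} functional equations (for $b$ and $b^{-1}$, via the self-duality $G_b=G_{b^{-1}}$), the irrationality of $b^2$, and an analyticity/growth hypothesis forcing a bounded doubly-quasi-periodic entire function to be constant --- none of which appears in your sketch. The far cleaner route is to deduce \eqref{qpenta} from \eqref{qexp} directly: since $uf(v)u^{-1}=f(q^2v)$, the functional equation $G_b(x+b)=(1-e^{2\pi ibx})G_b(x)$ together with $e^{2\pi i b(Q/2-b)}=-q^{-1}$ gives the conjugation identity $g_b(v)\,u\,g_b(v)^{-1}=u+q^{-1}uv$; the operators $u$ and $q^{-1}uv$ again satisfy the $q^2$-commutation relation, so functional calculus and \eqref{qexp} yield $g_b(v)g_b(u)g_b(v)^{-1}=g_b(u+q^{-1}uv)=g_b(u)\,g_b(q^{-1}uv)$, which is \eqref{qpenta}. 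Your fallback (two applications of the beta integral) also works but is a genuinely heavier computation.
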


We will also use the following useful Lemma:
\begin{Lem}\label{infprod}\cite[Prop 5]{Sh} for $\Im (b^2)>0$, $G_b(x)$ admits an infinite product description given by
\Eq{G_b(x)=\bar{\ze_b}\frac{\prod_{n=1}^\oo (1-e^{2\pi
ib\inv(x-nb\inv)})}{\prod_{n=0}^\oo(1-e^{2\pi ib(x+nb)})}.}
\end{Lem}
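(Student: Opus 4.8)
The plan is to derive the infinite product formula directly from the double sine / double gamma definition of $G_b(x)$ by relating $\log G_b(x)$ to the double zeta function $\ze_2(s,z|\w)$ and then resumming one of the two lattice directions. First I would use the hypothesis $\Im(b^2)>0$: this is exactly what makes $|e^{2\pi ib^2}|<1$ and $|e^{2\pi ib^{-2}}|<1$ — equivalently $|q|<1$ and $|\widetilde q|<1$ in the notation of the excerpt — so that both the numerator product $\prod_{n=1}^\oo(1-e^{2\pi ib^{-1}(x-nb^{-1})})$ and the denominator product $\prod_{n=0}^\oo(1-e^{2\pi ib(x+nb)})$ converge absolutely and locally uniformly in $x$, defining a meromorphic function of $x$. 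So the right-hand side is a well-defined meromorphic function; the task is to show it equals $G_b(x)$.

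The key steps, in order, are: (1) Verify that the proposed right-hand side $R(x):=\bar\ze_b\,\frac{\prod_{n=1}^\oo(1-e^{2\pi ib^{-1}(x-nb^{-1})})}{\prod_{n=0}^\oo(1-e^{2\pi ib(x+nb)})}$ satisfies the first functional equation $R(x+b)=(1-e^{2\pi ibx})R(x)$: shifting $x\mapsto x+b$ multiplies each factor $e^{2\pi ib(x+nb)}$ by nothing in index (the denominator product over $n\ge 0$ just reindexes, picking up the $n=0$ factor $1-e^{2\pi ibx}$ in the denominator, hence $1-e^{2\pi ibx}$ in $R$), while the numerator factors $e^{2\pi ib^{-1}(x-nb^{-1})}$ are multiplied by $e^{2\pi i}=1$ and are unchanged. (2) Dually verify the $b^{-1}$-functional equation $R(x+b^{-1})=(1-e^{2\pi ib^{-1}x})R(x)$ — note the factor $e^{2\pi ib^2}$ appearing under $x\mapsto x+b^{-1}$ in the denominator is a genuine shift, not trivial, so this requires the companion functional equation $G_b(x+b^{-1})=(1-e^{2\pi ib^{-1}x})G_b(x)$, which follows from the self-duality $G_b=G_{b^{-1}}$ together with the stated $G_b(x+b)$-equation. (3) Check that $R$ and $G_b$ have the same zeros and poles with the same multiplicities — $R$ has zeros exactly where some numerator factor vanishes, i.e. $x-nb^{-1}\in b\Z$, that is $x=nb^{-1}+mb$ with... more carefully $x\in b^{-1}+b^{-1}\Z_{\ge 0}^{?}$... one reads off zeros at $x=Q+nb+mb^{-1}$ and poles at $x=-nb-mb^{-1}$, $n,m\ge0$, matching the Analyticity clause of the preceding Proposition. (4) Conclude that $R(x)/G_b(x)$ is entire, nowhere vanishing, and doubly quasi-periodic with trivial multipliers under $x\mapsto x+b$ and $x\mapsto x+b^{-1}$; since $b^2\notin\Q$ these two periods generate a dense subgroup of $\C$... actually since $\Im(b^2)>0$ the lattice $b\Z+b^{-1}\Z$ is a genuine rank-two lattice in $\C$, so $R/G_b$ descends to a nowhere-zero holomorphic function on the compact torus $\C/(b\Z+b^{-1}\Z)$, hence is a constant $c$. (5) Pin down $c=1$ by comparing a single value or asymptotic: evaluate both sides as $\Im(x)\to+\oo$, where $G_b(x)\to\bar\ze_b$ by the Asymptotic property in the preceding Proposition, and where every factor $e^{2\pi ib^{-1}(x-nb^{-1})}\to0$ and $e^{2\pi ib(x+nb)}\to0$ (using $\Im b>0$, $\Im b^{-1}>0$ which hold since $\Im b^2>0$), so $R(x)\to\bar\ze_b$ as well; hence $c=1$.

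The main obstacle I expect is step (5), controlling the limit of the infinite products as $\Im(x)\to+\oo$: one must justify interchanging the limit with the infinite product (dominated convergence for products, using that $|e^{2\pi ib(x+nb)}|$ is summable in $n$ uniformly for $\Im(x)$ large) and one must be careful that the $b\leftrightarrow b^{-1}$ asymmetry of the formula (numerator indexed from $n=1$, denominator from $n=0$) is exactly what produces the constant $\bar\ze_b$ rather than $\ze_b$ — matching the $\Im(x)\to+\oo$ asymptotic and not the $\Im(x)\to-\oo$ one. A secondary subtlety is step (4): I should confirm the torus argument is legitimate, i.e. that once the multipliers are trivial the quotient really is elliptic with no poles, so Liouville on the torus forces constancy; this is where having matched \emph{all} zeros and poles in step (3), with multiplicities, is essential. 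Since the cited reference \cite{Sh} presumably gives a direct computation from the $\ze_2$ representation (expanding $(z+m_1w_1+m_2w_2)^{-s}$ and summing the geometric series in $m_2$ before differentiating at $s=0$), an alternative cleaner route is to do exactly that resummation: write $\log G_b(x)=\partial_s\ze_2(s,x|b,b^{-1})|_{s=0}$, perform the $m_2$-sum using $\sum_{m\ge0}r^m=(1-r)^{-1}$ after a Mellin/Lerch manipulation, and recognize the resulting single sum as $\sum_{n\ge0}\log(1-e^{2\pi ib(x+nb)})$ up to the $b^{-1}$-contribution and the constant $\bar\ze_b$; but the functional-equation-plus-Liouville argument above is self-contained given only what the excerpt has already established, so that is the route I would write up.
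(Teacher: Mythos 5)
The paper does not prove this lemma at all: it is quoted verbatim from Shintani \cite{Sh}, so there is no internal argument to compare against. Your functional--equation--plus--Liouville strategy is therefore a genuinely independent, self-contained route, and in outline it is correct: the two products converge for $\Im(b^2)>0$, the right-hand side $R(x)$ satisfies the same two shift equations as $G_b$, the divisors agree, $R/G_b$ descends to a nowhere-vanishing holomorphic function on the torus $\C/(b\Z+b^{-1}\Z)$ (a genuine lattice precisely because $\Im(b^2)>0$), and the constant is fixed by an asymptotic comparison. What this buys over the resummation of $\ze_2$ is that it uses only properties already recorded in the paper; what it costs is that the divisor and asymptotic bookkeeping must be done honestly, and that is where your sketch currently has holes.

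Three repairs are needed. First, and most substantively, step (3) cannot be done by ``reading off'' zeros factor by factor: the single denominator factor $1-e^{2\pi ib(x+nb)}$ vanishes on the \emph{entire} translate $x\in-nb+b^{-1}\Z$, and the numerator factor $1-e^{2\pi ib^{-1}(x-nb^{-1})}$ on $x\in nb^{-1}+b\Z$. Writing $x=\a b+\b b^{-1}$ (uniquely, since $b,b^{-1}$ are $\R$-linearly independent), the denominator has simple zeros exactly on $\{\a\in\Z_{\le0},\ \b\in\Z\}$ and the numerator on $\{\a\in\Z,\ \b\in\Z_{\ge1}\}$; only after cancelling the common simple zeros on $\{\a\le0,\ \b\ge1\}$ does $R$ have poles exactly on $-\Z_{\ge0}b-\Z_{\ge0}b^{-1}$ and zeros exactly on $Q+\Z_{\ge0}b+\Z_{\ge0}b^{-1}$, matching $G_b$. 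Without this cancellation the Liouville step appears to fail, so it must be made explicit. Second, two sign assertions are wrong as written: $\Im(b^2)>0$ gives $|e^{2\pi ib^2}|<1$ but $|e^{2\pi ib^{-2}}|>1$ (what the numerator actually contains, and what you need, is $|e^{-2\pi ib^{-2}}|<1$); likewise $\Im(b)$ and $\Im(b^{-1})$ always have \emph{opposite} signs, so in step (5) you cannot let $\Im(x)\to+\oo$ along the imaginary axis and expect both $e^{2\pi ibx}$ and $e^{2\pi ib^{-1}x}$ to tend to $0$; you must send $x\to\oo$ along a ray in the open sector where both $\Im(bx)$ and $\Im(b^{-1}x)$ increase (nonempty under the standard normalization that the periods $b,b^{-1}$ lie in a common half-plane), and check that the $\Im(x)\to+\oo$ asymptotic $G_b\to\bar{\ze_b}$, stated in the paper for real $b$, persists in that sector after continuation in $b$. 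Third, a small mechanical point: under $x\mapsto x+b^{-1}$ the denominator is literally invariant (the factor acquired is $e^{2\pi ib\cdot b^{-1}}=1$) and it is the \emph{numerator} that reindexes and emits $1-e^{2\pi ib^{-1}x}$; your aside about a nontrivial shift by $e^{2\pi ib^2}$ in the denominator describes the wrong product. None of this derails the strategy, but the first item is a real gap in the proposal as it stands.
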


\begin{Lem}\label{FT} \cite[(3.31), (3.32)]{BT} We have the following Fourier transformation formula:
\Eq{\int_{\R+i0} dt e^{2\pi i t r}\frac{e^{-\pi i t^2}}{G_b(Q+i
t)}=\frac{\bar{\ze_b}}{G_b(\frac{Q}{2}-ir)}=g_b(e^{2\pi b r}),}
\Eq{\int_{\R+i0} dt e^{2\pi i t r}\frac{e^{-\pi Qt}}{G_b(Q+i
t)}=\ze_b G_b(\frac{Q}{2}-ir)=\frac{1}{g_b(e^{2\pi br})},} where the contour goes above the pole
at $t=0$.

Using the reflection properties, we also obtain \Eq{\int_{\R-i0} dt
e^{-2\pi i t r}e^{-\pi Q t}G_b(i
t)=\frac{\bar{\ze_b}}{G_b(\frac{Q}{2}-ir)},}
\Eq{\int_{\R-i0} dt
e^{-2\pi i t r}e^{\pi it^2}G_b(it)=\ze_b G_b(\frac{Q}{2}-ir),}
where
the contour goes below the pole at $t=0$.
\end{Lem}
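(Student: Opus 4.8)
The final statement to prove is Lemma~\ref{FT}, which asserts four Fourier transformation formulas for the quantum dilogarithm $G_b$. The plan is to establish the first formula directly by contour integration and residue summation, then obtain the remaining three by elementary manipulations (the functional and reflection properties of $G_b$, together with a change of variables).

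First I would prove the identity
\[
\int_{\R+i0} dt\, e^{2\pi i t r}\frac{e^{-\pi i t^2}}{G_b(Q+it)}=\frac{\bar{\ze_b}}{G_b(\tfrac{Q}{2}-ir)}=g_b(e^{2\pi b r}).
\]
The last equality is just the definition of $g_b$ after substituting $x=e^{2\pi br}$, so the real content is the integral evaluation. The function $\frac{1}{G_b(Q+it)}$ has poles exactly where $G_b(Q+it)$ vanishes, i.e.\ at $Q+it=Q+nb+mb\inv$, equivalently $t=-i(nb+mb\inv)$, for $n,m\in\Z_{\geq0}$; these lie in the lower half-plane. Using the asymptotic properties of $G_b$ from Proposition~\ref{asymp}, one checks that $\frac{e^{-\pi i t^2}}{G_b(Q+it)}$ decays fast enough in the lower half-plane (when $r$ lies in an appropriate range) to close the contour downward, so the integral equals $-2\pi i$ times the sum of residues at $t=-i(nb+mb\inv)$. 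Using the residue formula for $\frac{1}{G_b(Q+z)}$ given in Proposition~\ref{residue}, together with the Gaussian factor $e^{-\pi i t^2}$ evaluated at these points, the sum collapses into a product expansion. I would then recognize this product, via the infinite product formula in Lemma~\ref{infprod} for $G_b(\tfrac{Q}{2}-ir)$, as exactly $\bar{\ze_b}/G_b(\tfrac Q2-ir)$; this matching of the residue series with the Shintani-type product is the crux of the computation. (Alternatively, one can cite \cite{BT} for this, since the lemma is attributed there, but I would want to record at least the skeleton of the residue argument.)

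Next, the second formula,
\[
\int_{\R+i0} dt\, e^{2\pi i t r}\frac{e^{-\pi Q t}}{G_b(Q+it)}=\ze_b G_b(\tfrac{Q}{2}-ir)=\frac{1}{g_b(e^{2\pi br})},
\]
I would derive from the first by exploiting the reflection property $G_b(x)G_b(Q-x)=e^{\pi i x(x-Q)}$. Writing $\frac{1}{G_b(Q+it)}=e^{-\pi i(Q+it)(it)}G_b(-it)=e^{\pi i t^2-\pi Qt}G_b(-it)$ converts the $e^{-\pi i t^2}$-weighted integral of $1/G_b(Q+it)$ into an $e^{-\pi Qt}$-weighted integral of $G_b(-it)$, and conversely; combined with the complex-conjugation relation $\overline{G_b(x)}=1/G_b(Q-\bar x)$ and the substitution $t\mapsto -t$, one transports the first identity to the second. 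Finally, the last two displayed formulas,
\[
\int_{\R-i0} dt\, e^{-2\pi i t r}e^{-\pi Q t}G_b(it)=\frac{\bar{\ze_b}}{G_b(\tfrac{Q}{2}-ir)},\qquad
\int_{\R-i0} dt\, e^{-2\pi i t r}e^{\pi it^2}G_b(it)=\ze_b G_b(\tfrac{Q}{2}-ir),
\]
follow from the first two by substituting $t\mapsto -t$ (which sends the contour $\R+i0$ to $\R-i0$) and once more applying the reflection identity to rewrite $1/G_b(Q+it)$ in terms of $G_b(it)$; the pole at $t=0$ of $G_b(it)$ is the image of the pole of $1/G_b(Q+it)$ under reflection, accounting for the contour passing below rather than above.

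The main obstacle is the first formula: justifying the contour closure requires care about the range of $r$ for which the Gaussian-times-$1/G_b$ integrand actually decays in the lower half-plane (the asymptotics in Proposition~\ref{asymp} must be combined with the $e^{-\pi i t^2}$ factor, whose modulus is $e^{2\pi \Im(t)\Re(t)}$ and so is not uniformly decaying), and then resumming the residue series and identifying it with the Shintani product of Lemma~\ref{infprod}. Once that identification is in hand, everything else is bookkeeping with the functional equations. Since the result is quoted from \cite{BT}, it is legitimate to present this as a sketch and defer the full convergence estimates and the product-matching to that reference, but I would at least indicate the residue computation so the reader sees why the answer takes the form $\bar\ze_b/G_b(\tfrac Q2-ir)$.
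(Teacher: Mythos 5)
The paper offers no proof of this lemma at all: it is quoted verbatim from \cite{BT} (equations (3.31), (3.32)), so there is no internal argument to compare yours against. Your sketch is nonetheless a sound reconstruction of the standard derivation, with two remarks. First, the four identities are less independent than your ordering suggests: by the reflection property, $e^{-\pi i t^2}/G_b(Q+it)=e^{\pi Qt}G_b(-it)$ and $e^{-\pi Qt}/G_b(Q+it)=e^{\pi i t^2}G_b(-it)$, so the first and third displayed formulas (and likewise the second and fourth) are \emph{literally the same integral} after the substitution $t\mapsto -t$; the only genuinely new step beyond the single residue computation is passing from the first pair to the second, which for real $b$ and $r$ is immediate from complex conjugation $\overline{G_b(x)}=1/G_b(Q-\bar x)$ applied to the first identity. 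Your outline contains both ingredients, just phrased more circuitously than necessary. Second, identifying the residue series with $\bar{\ze_b}/G_b(\frac{Q}{2}-ir)$ cannot directly invoke Lemma \ref{infprod} for real $b$, since that product representation is stated only for $\Im(b^2)>0$; one either carries out the computation for $\Im(b^2)>0$ and analytically continues in $b$, or matches the two functional equations in $b$ and $b^{-1}$ together with the asymptotics of Proposition \ref{asymp}. Given that the lemma is explicitly a citation, deferring the convergence estimates and the product matching to \cite{BT}, as you propose, is entirely appropriate.
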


\begin{Lem}\label{tau} \cite[Lemma 15]{PT2}We have the tau-beta theorem:
\Eq{\int_C d\t e^{-2\pi \t \b}
\frac{G_b(\a+i\t)}{G_b(Q+i\t)}=\frac{G_b(\a)G_b(\b)}{G_b(\a+\b)},}
where the contour $C$ goes along $\R$ and goes above the poles of
$G_b(Q+i\t)$ and below those of $G_b(\a+i\t)$.
\end{Lem}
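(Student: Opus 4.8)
The plan is to show that both sides of the identity satisfy the same first-order difference equations in $\a$ and in $\b$, and then to remove the remaining periodic ambiguity by a Liouville-type argument together with one residue computation. Write $I(\a,\b)$ for the left-hand side. First I would record that, by the asymptotic properties of Proposition~\ref{asymp} (equivalently the infinite-product formula of Lemma~\ref{infprod}), for $\Re\a>0$, $\Re\b>0$ and $\Re(\a+\b)<Q$ the contour $C$ can be drawn as prescribed and the integrand decays: as $\t\to+\oo$ along $C$ one has $G_b(\a+i\t)/G_b(Q+i\t)\to1$, so $e^{-2\pi\t\b}$ forces decay, while as $\t\to-\oo$ the ratio behaves like $e^{-2\pi\t(\a-Q)}$, which combines with $e^{-2\pi\t\b}$ to give decay precisely when $\Re(\a+\b)<Q$. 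Thus $I(\a,\b)$ is holomorphic on that region, and the general identity will follow by analytic continuation once the pole/zero structure is matched to that of $G_b(\a)G_b(\b)/G_b(\a+\b)$.

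For the difference equations I would apply $G_b(x+b)=(1-e^{2\pi ibx})G_b(x)$ with $x=\a+i\t$, using $e^{2\pi ib(\a+i\t)}=e^{2\pi ib\a}e^{-2\pi b\t}$; this gives directly from the integrand
\Eq{I(\a+b,\b)=I(\a,\b)-e^{2\pi ib\a}\,I(\a,\b+b).}
A second relation comes from the change of variables $\t\to\t+ib$ in $I(\a+b,\b)$, which turns $G_b(\a+b+i\t)$ into $G_b(\a+i\t)$ and $G_b(Q+i\t)$ into $G_b(b^{-1}+i\t)=G_b(Q+i\t)/(1-e^{-2\pi b\t})$, and moves the integration to $C-ib$. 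Expanding $(1-e^{-2\pi b\t})$ produces two integrals over $C-ib$, with weights $e^{-2\pi\t\b}$ and $e^{-2\pi\t(\b+b)}$; moving each back to $C$ crosses only the pole of $1/G_b(Q+i\t)$ at $\t=0$, whose residue enters the two terms with the same coefficient (the exponential is $1$ there) and therefore cancels in the difference, leaving
\Eq{I(\a+b,\b)=e^{-2\pi ib\b}\big(I(\a,\b)-I(\a,\b+b)\big).}
Eliminating $I(\a+b,\b)$ between the two displays yields $I(\a,\b+b)=\frac{1-e^{2\pi ib\b}}{1-e^{2\pi ib(\a+\b)}}\,I(\a,\b)$ and hence $I(\a+b,\b)=\frac{1-e^{2\pi ib\a}}{1-e^{2\pi ib(\a+\b)}}\,I(\a,\b)$; a short computation shows $G_b(\a)G_b(\b)/G_b(\a+\b)$ obeys exactly these two recursions, and by the self-duality $G_b=G_{b^{-1}}$ everything holds verbatim with $b$ replaced by $b^{-1}$.

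Consequently the quotient $\Phi(\a,\b):=I(\a,\b)\,\dfrac{G_b(\a+\b)}{G_b(\a)G_b(\b)}$ is holomorphic where defined and invariant under the translations $\a\mapsto\a+b^{\pm1}$ and $\b\mapsto\b+b^{\pm1}$. Since $b^2\notin\Q$, the subgroup $b\Z+b^{-1}\Z$ is dense in $\R$, so, using the growth bounds from Proposition~\ref{asymp} to keep $\Phi$ bounded on vertical strips, the standard Liouville/Phragm\'en--Lindel\"of argument forces $\Phi$ to be constant. To evaluate it I would let $\b\to0^+$: on the right-hand side $G_b(\a+\b)/G_b(\a)\to1$ while $G_b(\b)\sim\frac1{2\pi\b}$ by the residue relation of Proposition~\ref{residue}, and on the left-hand side the integrand tends to $G_b(\a+i\t)/G_b(Q+i\t)\approx1$ for large $\t$, so the leading behaviour of $I(\a,\b)$ is that of $\int^{+\oo}e^{-2\pi\t\b}\,d\t=\frac1{2\pi\b}$; comparing, $\Phi\equiv1$, which is the assertion.

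I expect the main obstacle to be the analytic bookkeeping rather than any single idea: pinning down the exact domain of convergence and the admissible contour, justifying the $\t\to\t+ib$ shift and the claim that only $\t=0$ is crossed, and above all extracting strip-uniform growth estimates for $I(\a,\b)$ strong enough to run the Liouville step. A more computational alternative that bypasses Liouville is to represent $1/G_b(Q+i\t)$ and $G_b(\a+i\t)$ by their Fourier integrals via Lemma~\ref{FT} (together with the reflection identity $G_b(x)G_b(Q-x)=e^{\pi ix(x-Q)}$), interchange the order of integration so that $I(\a,\b)$ becomes an iterated Gaussian-type integral, and evaluate directly; this uses only results already recorded above but is heavier.
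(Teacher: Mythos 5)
The paper does not prove this lemma: it is quoted verbatim from \cite[Lemma 15]{PT2}, so there is no in-paper argument to compare against. Your proposal is essentially the standard proof of the tau--beta integral found in the literature (difference equations in $\a$ and $\b$ from $G_b(x+b)=(1-e^{2\pi ibx})G_b(x)$, quasi-periodicity of the quotient under $b$ and $b\inv$ shifts, density of $b\Z+b\inv\Z$, and normalization by the residue $\lim_{x\to0}xG_b(x)=\tfrac1{2\pi}$ as $\b\to0^+$), and the computations you display check out: the two recursions you derive are correct, the only pole crossed in the $\t\to\t+ib$ shift is indeed $\t=0$ (in fact the prefactor $1-e^{-2\pi b\t}$ cancels that pole of $1/G_b(Q+i\t)$ before you split the integral, which is the cleanest way to see the cancellation), and the right-hand side satisfies the same recursions. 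The genuinely delicate point is the one you name yourself: the naive domain of convergence $\Re(\a+\b)<Q$ shrinks when you replace $\b$ by $\b+b$, so to iterate the shift equations and run the dense-translation argument you must first establish the meromorphic continuation of $I(\a,\b)$ (by deforming $C$ and subtracting residues) and check that its pole set, being discrete and invariant under a dense group of real translations, is empty where the quotient is claimed to be holomorphic. With that bookkeeping supplied, the argument is complete.
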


\begin{Lem}[ $q$-binomial theorem] \label{qbi}\cite[B.4]{BT}
Let $u, v$ be positive self-adjoint operators with $uv=q^2vu$. We have:
\Eq{(u+v)^{it}=b\int_{C}d\t \veca{t\\\t}_b u^{i(t-\t)}v^{i\t},}
where
\Eq{\veca{t\\\t}_b=\frac{e^{2\pi
ib^2\t(t-\t)}G_b(Q+ibt)}{G_b(Q+ibt-ib\t)G_b(Q+ib\t)}=\frac{G_b(-ib\t)G_b(ib\t-ibt)}{G_b(-ibt)},}
and $C$ is the contour along $\R$ that goes above the pole at $\t=0$
and below the pole at $\t=t$.

Similarly, for $uv=q^{-2}vu$, we have:
\Eq{(u+v)^{it}=b\int_{C}d\t \veca{t\\\t}^b u^{i\t}v^{i(t-\t)},} where
\Eq{\veca{t\\\t}^b=\frac{G_b(Q+ibt)}{G_b(Q+ibt-ib\t)G_b(Q+ib\t)},}
with the same contour $C$ as above.
\end{Lem}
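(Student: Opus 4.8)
The plan is to derive the integral formula for $(u+v)^{it}$ from the quantum exponential relation $g_b(u)g_b(v)=g_b(u+v)$ of the earlier Lemma, combined with the Fourier-transform identities of Lemma \ref{FT} that express $g_b$ in terms of $G_b$. First I would write $g_b(w)$ for a positive self-adjoint operator $w$ using the first formula of Lemma \ref{FT}, namely
\[
g_b(w) = \int_{\R+i0} dt\, w^{\frac{it}{2\pi b}\cdot 2\pi b}\,\frac{e^{-\pi i t^2}}{G_b(Q+it)},
\]
more precisely substituting $w=e^{2\pi b r}$ so that the Fourier kernel $e^{2\pi i t r}$ becomes the power $w^{it/(b \cdot 2\pi) \cdot 2\pi b}$; after a change of variables $t \mapsto bt$ this gives an expression for $g_b(w)$ as an integral of $w^{i\tau}$ against a ratio of $G_b$-functions. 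The key computational step is then to take the quantum exponential identity $g_b(u)g_b(v) = g_b(u+v)$, expand all three factors as such integrals, and on the left-hand side use the $q^2$-commutation relation $uv = q^2 vu$ to normal-order the product $u^{i\sigma}v^{i\rho}$ (picking up a factor $q^{2\sigma\rho}=e^{2\pi i b^2 \sigma\rho}$ from moving powers past each other). Matching the coefficient of a fixed monomial $u^{i(t-\tau)}v^{i\tau}$ on both sides then forces the claimed identity, with the coefficient $\binom{t}{\tau}_b$ emerging as a convolution integral of two $G_b$-ratios, which is evaluated in closed form precisely by the tau-beta theorem (Lemma \ref{tau}).

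In more detail: after normal ordering, the left side becomes $\int\!\int d\sigma\, d\rho\; c(\sigma)c(\rho) e^{2\pi i b^2 \sigma \rho}\, u^{i\sigma} v^{i\rho}$ where $c(\cdot)$ is the $G_b$-ratio kernel; substituting $\sigma = t-\tau$ and collecting the $\rho$-integral (which localizes via the Fourier inversion implicit in the right-hand $g_b(u+v)$), one is left with a single integral over $\tau$ whose kernel is
\[
e^{2\pi i b^2 \tau(t-\tau)} \int_C d\xi\, \frac{G_b(\cdots + i\xi)}{G_b(Q+i\xi)} \;=\; e^{2\pi i b^2 \tau(t-\tau)}\,\frac{G_b(Q+ibt)}{G_b(Q+ibt-ib\tau)G_b(Q+ib\tau)},
\]
the last equality being the tau-beta theorem. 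This is exactly $\binom{t}{\tau}_b$; the second expression for $\binom{t}{\tau}_b$ in the statement follows by applying the reflection property $G_b(x)G_b(Q-x)=e^{\pi i x(x-Q)}$ to each factor and simplifying the resulting exponential prefactor. The $q^{-2}$-case is obtained by the substitution $b \to$ the opposite orientation, equivalently by taking adjoints: if $uv=q^{-2}vu$ then $vu = q^2 uv$, so one applies the already-proven formula with the roles of $u,v$ swapped, and the absence of the exponential prefactor in $\binom{t}{\tau}^b$ reflects that no self-dual symmetrization factor $e^{\frac{\pi i}{2}x(x-Q)}$ is needed.

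The main obstacle I anticipate is \emph{analytic}: justifying the interchange of the order of integration when expanding $g_b(u)g_b(v)$ as a double integral and normal-ordering, since the kernels $1/G_b(Q+it)$ only have Gaussian-type decay in one imaginary direction (by the asymptotics in Proposition \ref{asymp}), and the operator powers $u^{i\sigma}, v^{i\rho}$ are unbounded. One must argue on a dense domain — here the space $\cW$ of Proposition \ref{MTprop}, which is a core for $e^{\alpha x}$ and $e^{\beta p}$ — and control the contour positions (the prescriptions "above the pole at $\tau=0$, below the pole at $\tau=t$") so that all shifts are legitimate; the contour $C$ in the tau-beta theorem must be compatible with these. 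A secondary, purely bookkeeping obstacle is tracking the branch/phase conventions so that the two displayed forms of $\binom{t}{\tau}_b$ genuinely agree, which amounts to a careful application of the reflection formula. Since the statement is quoted from \cite{BT}, I would in practice cite that reference for the analytic details and present only the formal computation sketched above.
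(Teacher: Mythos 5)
The paper itself offers no proof of this lemma---it is quoted from \cite[B.4]{BT}---so your proposal can only be judged on its own merits. The overall route, namely representing $g_b(w)=b\int_{\R+i0}w^{i\tau}e^{-\pi i b^2\tau^2}G_b(Q+ib\tau)^{-1}\,d\tau$ via Lemma \ref{FT}, feeding this into $g_b(u)g_b(v)=g_b(u+v)$, and reading off the binomial kernel, is viable (it is essentially Volkov's derivation of the noncommutative binomial from the noncommutative exponential), and within this paper it is not circular since the quantum exponential relation is itself taken as given. But the step you call ``matching the coefficient of a fixed monomial $u^{i(t-\tau)}v^{i\tau}$'' has a genuine gap: equating the two sides gives, for each $(\sigma,\rho)$, only the single relation $c(\sigma)c(\rho)=\int c(\kappa)K(\kappa;\sigma,\rho)\,d\kappa$ for the unknown two-variable kernel $K$ of $(u+v)^{i\kappa}$, which is underdetermined; the ``localization'' you allude to does not come for free from Fourier inversion. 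The missing idea is homogeneity: replace $(u,v)$ by $(\lambda u,\lambda v)$ for $\lambda>0$ (which preserves the commutation relation), so that $g_b(\lambda u)g_b(\lambda v)=g_b(\lambda(u+v))$ holds for all $\lambda$, and Mellin-invert in $\lambda$. This forces $K$ to be supported on $\sigma+\rho=\kappa$ and immediately yields $\veca{t\\\tau}_b=c(t-\tau)c(\tau)/c(t)$ with $c(\tau)=e^{-\pi ib^2\tau^2}/G_b(Q+ib\tau)$; the Gaussian factors combine to exactly $e^{2\pi ib^2\tau(t-\tau)}$. (One also needs linear independence of the Weyl monomials $u^{i\sigma}v^{i\rho}$, e.g.\ in the realization $u=e^{2\pi bx}$, $v=e^{2\pi bp}$, to justify equating kernels at all.)

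Two further points suggest the computation was not actually carried through. Once the scaling argument is in place no convolution integral survives, so Lemma \ref{tau} is not needed; moreover the display you attribute to it cannot be an instance of the tau-beta theorem as stated, since that produces $G_b(\a)G_b(\b)/G_b(\a+\b)$ (two factors upstairs) whereas your right-hand side has two factors downstairs---one must first pass to the reflected form $G_b(-ib\tau)G_b(ib\tau-ibt)/G_b(-ibt)$. Likewise the quadratic phase does not come from normal-ordering: the product $g_b(u)g_b(v)$ already yields monomials in the order $u^{i\sigma}v^{i\rho}$, and the phase comes from the Gaussians in $c$. Your reduction of the $q^{-2}$ case to the $q^{2}$ case by swapping $u$ and $v$ is correct, but the prefactor disappears because reordering $v^{i(t-\tau)}u^{i\tau}$ into $u^{i\tau}v^{i(t-\tau)}$ produces exactly the cancelling factor $e^{-2\pi ib^2\tau(t-\tau)}$, not for the reason you give. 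The analytic caveats you raise (working on the core $\cW$, and the contour passing above $\tau=0$ and below $\tau=t$ to match the pole structure of $1/G_b(Q+ib\tau)$ and $1/G_b(Q+ibt-ib\tau)$) are the right ones.
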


\begin{Rem} When $t$ approaches $-in$ for positive integer $n$, by first shifting the contour along the poles at $\t=t+ik$ for $0\leq k\leq n$, the integration vanishes and $n+1$ residues are left, which is precisely the terms in the usual $q$-binomial formula.
\end{Rem}

\begin{Rem}\label{qbirem} The $q$-binomial theorem is actually the $q$-analogue of the classical formula \cite[(3.3.9)]{PK}:
\Eq{\frac{1}{2\pi
i}\int_{c-i\oo}^{c+i\oo}\G(s)\G(a-s)x^{-s}ds=\frac{\G(a)}{(1+x)^a},}
when $0<c<\Re(a)$. After a change of variables with $x$ replaced by
$x/y$, $a$ by $-it$, $s$ by $-is$ and a suitable shift of contour,
we obtain
\Eq{(x+y)^{it}=\frac{1}{2\pi}\int_{-\oo}^\oo\frac{\G(-is)\G(-it+is)}{\G(-it)}x^{is}y^{it-is}ds,}
where the contour separates the poles of the two gamma functions. We can easily see that under the limiting process described in the next section, the $q$-binomial theorem reduces precisely to this classical formula.
\end{Rem}

\subsection{Limits of the Quantum Dilogarithm}\label{sec:special:limit}

Recall that the $b$-hypergeometric function (slightly modified from \cite{PT2}) is defined by:
\Eq{\label{Fb} F_b(\a,\b,\c; z):=\frac{G_b(\c)}{G_b(\a)G_b(\b)}\int_C (-z)^{ib\inv \t}e^{\pi i\t^2}\frac{G_b(\a+i\t)G_b(\b+i\t)G_b(-i\t)}{G_b(\c+i\t)}d\t}
where the contour along $\R$ separates the poles of $G_b(\a+i\t)G_b(\b+i\t)$ from those of
$\frac{G_b(-i\t)}{G_b(\c+i\t)}$.

In comparison with the classical formula:
\Eq{\label{2F1}{}_2F_1(a,b,c;z)=\frac{\G(c)}{\G(a)\G(b)}\frac{1}{2\pi
}\int_{C}(-z)^{is}\frac{\G(a+is)\G(b+is)\G(-is)}{\G(c+is)}ds,}
we see therefore that there is a strong analogy between the function $G_b(z)$ and the Gamma function $\G(x)$. However, we know that there is no direct classical limit $b\to 0$ because of the factor $Q=b+\frac{1}{b}$ involved in the definitions. It turns out that we can still define certain kind of limit of the function $G_b(x)$ that enables one to compare it with the classical Gamma function $\G(x)$.

Recall that by Lemma \ref{infprod}, if $\Im(b^2)>0$, then $G_b(x)$
can be expressed as a ratio of infinite product:

$$G_b(x)=\bar{\ze_b}\frac{\prod_{n=1}^\oo (1-e^{2\pi
ib\inv(x-nb\inv)})}{\prod_{n=0}^\oo(1-e^{2\pi ib(x+nb)})}$$

or after scaling:
\Eq{
G_b(bx)=\bar{\ze_b}\frac{\prod_{n=1}^\oo (1-e^{2\pi ix}e^{-2\pi i nb^{-2})})}{\prod_{n=0}^\oo(1-e^{2\pi ib^2(x+n)})}
}

In order to take the limit, we let $b^2=ir$ for real $r>0$ (more generally for $\Re(r)>0$). With respect to $q$, this means that we are going ``inside the circle", and approach $q=1$ from the interior of the unit disk.

\begin{eqnarray*}
G_b(bx)&=&\bar{\ze_b}\frac{\prod_{n=1}^\oo (1-e^{2\pi ix}e^{-2\pi  n/r)})}{\prod_{n=0}^\oo(1-e^{-2\pi r(x+n)})}\\
&=&\bar{\ze_b}\frac{(e^{2\pi i x-2\pi/r};e^{-2\pi/r})_\oo}{(q^{2x};q^2)_\oo}.
\end{eqnarray*}

Note that under $b^2=ir$, we also have
\Eq{\bar{\ze_b}=e^{-\frac{\pi i}{4}-\frac{\pi
i}{12}(b^2+b^{-2})}=e^{-\frac{\pi i}{4}+\frac{\pi r-\pi/r}{12}},} and
that when $r\to 0^+$, the term
$$(e^{2\pi ix-2\pi/r};e^{-2\pi/r})_\oo\to 1.$$

On the other hand, the denominator resembles the $q$-Gamma function:
\Eq{\G_q(x)=\frac{(q^2;q^2)_\oo}{(q^{2x};q^2)_\oo}(1-q^2)^{-x+1}.}

For the ratio $\frac{\bar{\ze_b}}{ (q^2;q^2)_\oo}$, we have the following observation:
\begin{Lem}
\Eq{\lim_{r\to 0} \frac{\bar{\ze_b}}{\sqrt{-i}|b|
(q^2;q^2)_\oo}=\lim_{r\to 0}e^{-\frac{\pi i}{4}}\frac{e^{\frac{\pi
r-\pi/r}{12}}}{\sqrt{-i}\sqrt{r} (q^2;q^2)_\oo}=1,} 
where we denote $e^{-\frac{\pi i}{4}}$ by $\sqrt{-i}$.
\end{Lem}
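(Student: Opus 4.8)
The plan is to reduce everything to the classical limit of the $q$-gamma function, namely $\lim_{q\to 1^-}\G_q(x)=\G(x)$ (valid here because $r\to0^+$ means $q=e^{-\pi r}\to 1$ along the real axis from below), evaluated at $x=1$: since $\G(1)=1$, one expects $(1-q^2)\inv (q^2;q^2)_\oo \big/ (q^2;q^2)_\oo$-type manipulations to collapse. Concretely, from the displayed identity $\G_q(x)=\frac{(q^2;q^2)_\oo}{(q^{2x};q^2)_\oo}(1-q^2)^{-x+1}$ we get at $x=1$ simply $\G_q(1)=1$ identically, so that is not quite the right handle; instead I would use the classical product formula $(q^2;q^2)_\oo \sim$ something with known asymptotics. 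The cleanest route is the \emph{Dedekind eta / modular} asymptotics: writing $q^2=e^{-2\pi r}$, the Euler function $\phi(e^{-2\pi r})=(q^2;q^2)_\oo$ satisfies, by the modular transformation of $\eta$,
\Eq{(q^2;q^2)_\oo = \phi(e^{-2\pi r}) = \frac{1}{\sqrt r}\,e^{-\frac{\pi}{12r}+\frac{\pi r}{12}}\,\phi(e^{-2\pi/r}),}
and $\phi(e^{-2\pi/r})\to 1$ as $r\to0^+$ since $e^{-2\pi/r}\to 0$.

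First I would substitute $b^2=ir$ and the given formula $\bar\ze_b = e^{-\frac{\pi i}{4}+\frac{\pi r-\pi/r}{12}}$ into the ratio, and also write $|b|=\sqrt r$. Then
\Eq{\frac{\bar\ze_b}{\sqrt{-i}\,|b|\,(q^2;q^2)_\oo} = e^{-\frac{\pi i}{4}}\cdot\frac{e^{\frac{\pi r}{12}-\frac{\pi}{12r}}}{\sqrt{-i}\,\sqrt r\,(q^2;q^2)_\oo}.}
Next I would insert the eta-asymptotic for $(q^2;q^2)_\oo$ displayed above. The factor $\frac1{\sqrt r}$ cancels $\sqrt r$ in the denominator — wait, it multiplies, so actually $\sqrt r\cdot(q^2;q^2)_\oo = e^{-\frac{\pi}{12r}+\frac{\pi r}{12}}\phi(e^{-2\pi/r})$, and then the exponential prefactors $e^{\frac{\pi r}{12}-\frac{\pi}{12r}}$ in numerator and $e^{-\frac{\pi}{12r}+\frac{\pi r}{12}}$ in denominator cancel exactly, leaving
\Eq{\frac{\bar\ze_b}{\sqrt{-i}\,|b|\,(q^2;q^2)_\oo} = \frac{e^{-\frac{\pi i}{4}}}{\sqrt{-i}\,\phi(e^{-2\pi/r})} \longrightarrow \frac{e^{-\frac{\pi i}{4}}}{\sqrt{-i}} = 1,}
using $\phi(e^{-2\pi/r})\to1$ and the convention $\sqrt{-i}=e^{-\pi i/4}$ stated in the lemma. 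That completes the argument.

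The main obstacle is purely bookkeeping of the modular transformation and the branch of the square root: one must be careful that the $\eta$-transformation $\eta(-1/\tau)=\sqrt{-i\tau}\,\eta(\tau)$, when specialized to $\tau = ir$ so that $-1/\tau = i/r$ and $q^2 = e^{2\pi i\tau}$, produces exactly the real prefactor $1/\sqrt r$ with no stray phase — this is where the hypothesis $r>0$ (rather than complex $r$ with $\Re r>0$) makes the square root unambiguous, and it is also why the author introduces the notation $\sqrt{-i}:=e^{-\pi i/4}$ to match. A secondary point is to justify $\phi(e^{-2\pi/r})\to1$: this is immediate from the infinite product since every factor $1-e^{-2\pi n/r}\to1$ uniformly, but one should note the convergence is dominated so the product converges to $1$. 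Everything else is elementary cancellation of exponentials.
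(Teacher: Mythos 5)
Your argument is correct and follows essentially the same route as the paper: both proofs rewrite $(q^2;q^2)_\oo$ via the Dedekind eta function, apply the modular transformation $\eta(-1/\tau)=\sqrt{-i\tau}\,\eta(\tau)$ at $\tau=ir$ to trade the $e^{-\pi/(12r)}$ divergence for $\phi(e^{-2\pi/r})$, and conclude from $\phi(e^{-2\pi/r})\to 1$. The only difference is cosmetic (your initial detour through $\G_q(1)$ is abandoned and plays no role), so there is nothing to add.
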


\begin{proof}
We write $\eta(ir)=e^{-\frac{\pi r}{12}} (q^2;q^2)_\oo$, the
Dedekind eta function. Then from the well-known functional equation:
\Eq{\eta(-\tau\inv)=\sqrt{-i\tau}\eta(\tau),} substituting $\tau=ir$,
we have:
\begin{eqnarray*}
\eta(\frac{i}{r})&=&\sqrt{r}\eta(ir)\\
e^{-\frac{\pi}{12r}}(e^{-\frac{2\pi}{r}};e^{-\frac{2\pi}{r}})_\oo &=& e^{-\frac{\pi r}{12}} \sqrt{r} (q^2;q^2)_\oo\\
\frac{e^{\frac{\pi r-\pi/r}{12}}}{\sqrt{r}{(q^2,q^2)_\oo}}&=&(e^{-\frac{2\pi}{r}};e^{-\frac{2\pi}{r}})_\oo^{-1}
\end{eqnarray*}
and taking the limit $r\to 0^+$, we have
 $$\lim_{r\to 0}(e^{-\frac{2\pi}{r}};e^{-\frac{2\pi}{r}})_\oo = 1$$ as required.
\end{proof}

Finally, combining with the obvious limit:
\Eq{\lim_{r\to 0^+}\frac{|b|^2}{1-q^2}=\frac{1}{2\pi},}
we have the following:

\begin{Thm}\label{limit} The limit holds for $b^2=ir\to i0^+$:
\Eq{\label{Glim}\lim_{r\to 0} \frac{(2\pi b)G_b(bx)}{(-2\pi ib^2)^x} = \G(x),}
where $(-2\pi ib^2)=2\pi r>0$, hence the denominator is well-defined. The limit converges uniformly for every compact set in $\C$. This gives another proof of a similar limit first observed in \cite{Ru1}.

A similar analysis shows that
\Eq{\label{Glim2}\lim_{r\to 0} \frac{(2\pi b)G_b(Q+bx)}{(-2\pi ib^2)^{x+1}} = (1-e^{2\pi i x})\G(x+1).}
\end{Thm}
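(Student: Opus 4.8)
The plan is to establish the primary limit \eqref{Glim} directly, and then read off the secondary limit \eqref{Glim2} from the functional equations of $G_b$. For \eqref{Glim} I would start from the infinite-product description of Lemma \ref{infprod}, which is available precisely because $\Im(b^2)=r>0$. After the substitution $b^2=ir$ and the rescaling $x\mapsto bx$ already carried out above, this reads
\[
G_b(bx)=\bar{\ze_b}\,\frac{(e^{2\pi ix-2\pi/r};e^{-2\pi/r})_\oo}{(q^{2x};q^2)_\oo},\qquad q^2=e^{-2\pi r}.
\]
I would fix once and for all the branch $b=e^{\frac{\pi i}{4}}\sqrt r$, so that $b^2=ir$, $|b|=\sqrt r$, $\sqrt{-i}=e^{-\frac{\pi i}{4}}$, and $(-2\pi ib^2)=2\pi r>0$ as the statement requires.

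The second step is to recognize a $q$-gamma function in the denominator. Inserting $\G_q(x)=\frac{(q^2;q^2)_\oo}{(q^{2x};q^2)_\oo}(1-q^2)^{1-x}$ to eliminate $(q^{2x};q^2)_\oo$ produces the factorization
\[
\frac{(2\pi b)G_b(bx)}{(2\pi r)^x}=\left[\frac{\bar{\ze_b}}{\sqrt{-i}\,|b|\,(q^2;q^2)_\oo}\right]\cdot(e^{2\pi ix-2\pi/r};e^{-2\pi/r})_\oo\cdot\G_q(x)\cdot\left(\frac{1-q^2}{2\pi r}\right)^{x-1},
\]
where I have used the \emph{exact} identity $(2\pi b)\,\sqrt{-i}\,|b|=2\pi r$ coming from the chosen branch. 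I would then let $r\to 0^+$ in each factor separately: the bracket tends to $1$ by the Dedekind-eta lemma proved just above; the numerator $q$-Pochhammer tends to $1$ since $e^{-2\pi/r}\to 0$; the last factor tends to $1$ by the elementary limit $\frac{|b|^2}{1-q^2}\to\frac{1}{2\pi}$ recorded above; and $\G_q(x)\to\G(x)$. Multiplying the four limits gives \eqref{Glim}.

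The one genuinely analytic ingredient is the classical convergence $\G_q(x)\to\G(x)$ as the base $q^2=e^{-2\pi r}\to 1^-$, which I would invoke as a known result; the key structural point making it applicable is that $b^2=ir$ keeps $q^2$ \emph{real} and in $(0,1)$, which is exactly what is meant by approaching $q=1$ from inside the circle. The main obstacle is uniformity. Each entire factor converges locally uniformly, and $\G_q\to\G$ holds locally uniformly away from $\Z_{\le0}$, so the product converges locally uniformly off the non-positive integers. To capture the poles and justify convergence on every compact set in $\C$, I would pass to reciprocals, noting that $1/\G$ is entire and that for small $r$ the only poles of $G_b(bx)$ inside a fixed compact set are the $x=-n$, $n\in\Z_{\ge0}$ (the remaining poles $x=-n+mi/r$, $m\ge1$, run off to $i\oo$), so they match those of $\G$ and no spurious zero or pole survives. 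The careful bookkeeping of the $r$-dependent constants and of the branch of $b$ is where a sign or phase slip would propagate, so I would keep $(2\pi b)\sqrt{-i}|b|=2\pi r$ as an exact identity rather than an asymptotic one.

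Finally, \eqref{Glim2} follows from \eqref{Glim} by two applications of the functional equation. Writing $Q+bx=b(x+1)+b\inv$ and using $G_b(y+b)=(1-e^{2\pi iby})G_b(y)$ with $y=bx$, together with its self-dual companion $G_b(y+b\inv)=(1-e^{2\pi ib\inv y})G_b(y)$ (valid since $G_b=G_{b\inv}$) with $y=b(x+1)$, I obtain
\[
G_b(Q+bx)=(1-e^{2\pi ix})(1-e^{-2\pi rx})\,G_b(bx).
\]
Dividing by $(2\pi r)^{x+1}$, pulling out the factor $\frac{1-e^{-2\pi rx}}{2\pi r}\to x$, and applying \eqref{Glim} to the residual $\frac{(2\pi b)G_b(bx)}{(2\pi r)^x}\to\G(x)$, I arrive at $(1-e^{2\pi ix})\,x\,\G(x)=(1-e^{2\pi ix})\G(x+1)$, which is \eqref{Glim2}.
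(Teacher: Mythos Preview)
Your argument for \eqref{Glim} is exactly the paper's: you start from the infinite product of Lemma \ref{infprod}, factor through the $q$-gamma function, invoke the Dedekind-eta lemma for $\bar{\ze_b}/(\sqrt{-i}\,|b|\,(q^2;q^2)_\oo)\to 1$ and the elementary limit $|b|^2/(1-q^2)\to 1/2\pi$, and then use $\G_q\to\G$; the exact identity $(2\pi b)\sqrt{-i}\,|b|=2\pi r$ you isolate is precisely the bookkeeping that makes the constants match.

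The one point of departure is \eqref{Glim2}: the paper simply says ``a similar analysis'' (i.e.\ rerun the infinite-product computation for $G_b(Q+bx)$), whereas you deduce it from \eqref{Glim} via the two functional equations $G_b(y+b^{\pm1})=(1-e^{2\pi ib^{\pm1}y})G_b(y)$, obtaining $G_b(Q+bx)=(1-e^{2\pi ix})(1-e^{-2\pi rx})G_b(bx)$ and then $\frac{1-e^{-2\pi rx}}{2\pi r}\to x$. Your route is shorter and cleaner, and has the virtue of making the compatibility with $x\G(x)=\G(x+1)$ transparent; the paper's route would give an independent check of \eqref{Glim} but is more laborious. Both are correct.
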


\begin{Prop} The two limits \eqref{Glim} and \eqref{Glim2} are compatible with the reciprocal relations
$$G_b(x)G_b(Q-x)=e^{\pi i x(x-Q)},$$
$$\G(x)\G(1-x)=\frac{\pi}{\sin(\pi x)}.$$
\end{Prop}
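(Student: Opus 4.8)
The plan is to substitute $x\mapsto bx$ in the $G_b$-reciprocal relation and then feed in the two limits \eqref{Glim} and \eqref{Glim2}, verifying that the identity one obtains in the limit is exactly the $\G$-reflection formula. Concretely, write the reflection property as $G_b(bx)\,G_b(Q-bx)=e^{\pi i\,bx(bx-Q)}$, and observe that $G_b(Q-bx)=G_b(Q+b(-x))$, so \eqref{Glim2} applies to this factor with $x$ replaced by $-x$, while \eqref{Glim} applies to $G_b(bx)$. Substituting both asymptotics, the left-hand side behaves, as $b^2=ir\to i0^+$, like
\[
G_b(bx)\,G_b(Q-bx)\;\sim\;\frac{(-2\pi ib^2)^x}{2\pi b}\,\G(x)\cdot\frac{(-2\pi ib^2)^{1-x}}{2\pi b}\,\bigl(1-e^{-2\pi ix}\bigr)\G(1-x).
\]

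Next I would collect the elementary factors. The two powers of $-2\pi ib^2$ multiply to $(-2\pi ib^2)^{x+(1-x)}=-2\pi ib^2$, which against the $(2\pi b)^{-2}$ leaves the constant prefactor $\tfrac{-2\pi ib^2}{4\pi^2b^2}=\tfrac{-i}{2\pi}$. On the right-hand side, $bx(bx-Q)=b^2x^2-b^2x-x\to -x$ because $b^2=ir\to0$, so $e^{\pi i\,bx(bx-Q)}\to e^{-\pi ix}$. Equating the limits of the two sides gives
\[
\frac{-i}{2\pi}\,\G(x)\,\G(1-x)\,\bigl(1-e^{-2\pi ix}\bigr)=e^{-\pi ix},
\]
and inserting $1-e^{-2\pi ix}=e^{-\pi ix}\bigl(e^{\pi ix}-e^{-\pi ix}\bigr)=2i\,e^{-\pi ix}\sin(\pi x)$ collapses this to $\G(x)\G(1-x)=\pi/\sin(\pi x)$, which is precisely the classical reflection formula. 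Running the same chain of equalities backwards shows that, conversely, the $G_b$-reciprocal relation is reproduced in the limit by the $\G$-reflection formula, so the asserted compatibility is genuinely two-sided and exhibiting one direction is enough; the parallel check based on $G_b(Q+bx)\,G_b(-bx)=e^{\pi i(Q+bx)bx}$ together with $\G(x+1)\G(-x)=-\pi/\sin(\pi x)$ is entirely analogous.

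Since the algebra above is essentially bookkeeping, the only points deserving attention are analytic. First, replacing $G_b$ by its asymptotic expression inside a product is legitimate because Theorem \ref{limit} guarantees that the convergence in \eqref{Glim}, and likewise in \eqref{Glim2}, is uniform on compact subsets of $\C$, so away from the poles of the two $\G$-factors the limit of the product equals the product of the limits. Second, one must pin down the branch of $(-2\pi ib^2)^x$: with $b^2=ir$ and $r>0$ the number $-2\pi ib^2=2\pi r$ is a genuine positive real, so this is the ordinary real principal power and the exponents $x$ and $1-x$ add with no hidden $2\pi i$-monodromy contribution --- this is exactly where the choice of approaching $q=1$ from inside the unit circle is used. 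I expect this branch-tracking, together with the already available uniformity, to be the only real content; everything else is direct substitution.
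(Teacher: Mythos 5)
Your proof is correct and follows essentially the same route as the paper: substitute $x\mapsto bx$ into the $G_b$-reflection identity, apply \eqref{Glim} to $G_b(bx)$ and \eqref{Glim2} (with $x\mapsto -x$) to $G_b(Q-bx)$, and track the elementary prefactors and the limit $e^{\pi i\,bx(bx-Q)}\to e^{-\pi ix}$. In fact your factor $(1-e^{-2\pi ix})$ is the correct one --- the paper's displayed $(1-e^{2\pi ix})$ is a sign typo, as its own subsequent line $\frac{e^{\pi ix}-e^{-\pi ix}}{2\pi i}$ only follows from your version --- and your remarks on uniform convergence and the positivity of $-2\pi ib^2$ correctly identify the only analytic points at stake.
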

\begin{proof} We have
\begin{eqnarray*}
1&=&G_b(bx)G_b(Q-bx)e^{-\pi ibx(bx-Q)}\\
&=&\left(\frac{(2\pi b)G_b(bx)}{(-2\pi ib^2)^x}\right)\left(\frac{(2\pi b)G_b(Q-bx)}{(-2\pi ib^2)^{-x+1}}\right)\frac{(-2\pi ib^2)}{(2\pi b)^2}e^{-\pi ibx(bx-Q)}\\
&\to&\G(x)\G(1-x)(1-e^{2\pi ix})\frac{-i}{2\pi}e^{\pi ix}\\
&=&\frac{\pi}{\sin(\pi x)}\frac{e^{\pi ix}-e^{-\pi ix}}{2\pi i}\\
&=&1,
\end{eqnarray*}
where we used
$$e^{-\pi ibx(bx-Q)}=e^{-\pi ix(b^2 x -b^2 -1)}=e^{-\pi ix r(x-1)}e^{\pi ix}\to e^{\pi ix}.$$
\end{proof}

\section{$q$-Intertwiners}\label{sec:intertwiner}
We begin with the definition of the quantum plane that is used in \cite{FK}.
\begin{Def}
The quantum plane $\cB_q$ for $|q|=1$ is generated by two positive self-adjoint operators $X,Y$ such that 
$$XY=q^2 YX$$ in the sense of \cite{Sch}, i.e. 
\Eq{X^{is}Y^{it}=q^{-2st}Y^{it}X^{is},} for every $s,t\in\R$ as relations between unitary operators. The coproduct is given by
\begin{eqnarray}
\label{DX}\D X&=&X\ox X,\\
\label{DY}\D Y&=&Y\ox X+1\ox Y.
\end{eqnarray}
\end{Def}

In \cite{FK}, this is realized by $X=~e^{-2\pi b p}$ and $Y=e^{2\pi bx}$, where $p=\frac{1}{2\pi i}\del[,x]$, acting as unbounded positive self-adjoint operators on $\cH=L^2(\R)$, such that
\begin{eqnarray}
\label{HKX}X\cdot f(x)&=&f(x+ib),\\
\label{HKY}Y\cdot f(x)&=&e^{2\pi b x}f(x),
\end{eqnarray}
which is well-defined for functions in the core $\cW\sub L^2(\R)$ (cf. Definition \ref{cW}). We remark that $\cB_q$ is ``dual" to the quantum plane $\cA_q$ generated by $A,B$ defined in the Section \ref{sec:corep}, due to the different coproducts.

In the study of tensor products of representations, the operators act by the coproduct \eqref{DX}, \eqref{DY}. It was shown in \cite{FK} that there is a quantum dilogarithm transform that gives a unitary isomorphism as representations of
$\cB_q$:
\Eq{\cH_1\ox \cH_2\simeq \cM\ox \cH}
where $\cM=L^2(\R)$ is
the parametrization space (or the multiplicity module), and carries the trivial representation.

\begin{Prop}\label{QDTrans} The quantum dilogarithm transform is defined on $f,\phi \in\cW\ox\cW$ by
\begin{eqnarray}
\phi(\a,x)&=&\int_\R\int_{\R-i0} \ffloor{\a&x\\x_1 & x_2}f(x_1,x_2)dx_2dx_1,\\
f(x_1,x_2)&=&\int_{\R-i0}\int_\R \cceil{\a&x\\x_1 & x_2}\phi(\a,x)d\a dx.
\end{eqnarray}

Here the kernel is given by:
\begin{eqnarray}
\ffloor{\a&x\\x_1 & x_2}&=&e^{2\pi i\a(x-x_1)}\cE_R(x-x_1,x_2-x_1),\\
\cceil{\a&x\\x_1 & x_2}&=&e^{-2\pi i\a(x-x_1)}\cE_L(x_2-x_1,x-x_1),
\end{eqnarray} 
where
\begin{eqnarray}
\cE_R(z,w)&=&e^{2\pi izw}S_R(z-w),\\
\cE_L(z,w)&=&e^{-2\pi izw}S_L(z-w),
\end{eqnarray}
and
\begin{eqnarray}
S_R(z)&=&G(z-ia)e^{i\chi+\frac{\pi}{2}(z-ia)^2},\\
S_L(z)&=&G(z-ia)e^{-i\chi-\frac{\pi}{2}(z-ia)^2},
\end{eqnarray}
where $\chi=\frac{\pi}{24}(b^2+b^{-2})$. The contour for $x_2$ goes below
the pole at $x_2=x$, and the contour for $x$ goes below the pole at
$x=x_2$.

The integral transforms are unitary, hence they extend to the whole of $\cH_1\ox~\cH_2$ and $\cM\ox\cH$ respectively.
\end{Prop}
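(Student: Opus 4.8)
\textbf{Proof proposal for Proposition \ref{QDTrans}.}

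The plan is to verify the two assertions separately: first that the stated kernels $\ffloor{\a&x\\x_1&x_2}$ and $\cceil{\a&x\\x_1&x_2}$ intertwine the $\cB_q$-action (given by the coproduct \eqref{DX}, \eqref{DY}) with the action on $\cM\ox\cH$ in which $\cM$ carries the trivial representation, and second that the resulting integral transform is unitary. For the intertwining property I would work on the dense core $\cW\ox\cW$, where all the operators $X^{is},Y^{it}$ act by the explicit formulas \eqref{HKX}, \eqref{HKY} (shift by $ib$ and multiplication by $e^{2\pi bx}$). Since $\cE_R$ and $\cE_L$ are built from $G_b$ (via $S_R,S_L$, which are essentially $G_b$ times a Gaussian), the functional equation $G_b(x+b)=(1-e^{2\pi ibx})G_b(x)$ from Proposition \ref{asymp}, together with the quantum exponential relation \eqref{qexp} $g_b(u)g_b(v)=g_b(u+v)$ from Lemma \ref{infprod}'s neighbourhood, is exactly what translates $\D Y = Y\ox X + 1\ox Y$ into the recursion satisfied by the kernel in the variables $x_1,x_2$; this is the conceptual heart of why $g_b$ (equivalently $G_b$) appears. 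The action of $\D X$ (a simultaneous imaginary shift in $x_1,x_2$) should transform into a shift in $x$ plus a compensating phase, and the phase is absorbed by the Gaussian factors $e^{\pm\frac{\pi}{2}(z-ia)^2}$ and $e^{\pm 2\pi izw}$ in $S_R,S_L,\cE_R,\cE_L$. I would carry this out by a direct change of variables inside the integral, pushing the shift through the kernel and invoking the $G_b$ functional equation; the contour prescriptions (for $x_2$ below the pole at $x_2=x$, for $x$ below the pole at $x=x_2$) are exactly the ones that make the shifts legitimate without crossing poles, using the analyticity statement in Proposition \ref{asymp} (poles at $x=-nb-mb\inv$).

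For unitarity I would first check that the two integral transforms are formal inverses of each other — equivalently that $\int \cceil{\a&x\\x_1&x_2}\ffloor{\a&x\\x_1'&x_2'}d\a dx = \d(x_1-x_1')\d(x_2-x_2')$ — and then that the transform preserves the $L^2$ norm. The inversion is essentially a Fourier-transform computation in the variable $\a$ (the factors $e^{\pm 2\pi i\a(x-x_1)}$ produce a delta function $\d((x-x_1)-(x-x_1'))$ upon integrating $\a$), after which the remaining $x$-integral collapses via the reflection property $G_b(x)G_b(Q-x)=e^{\pi ix(x-Q)}$ in Proposition \ref{asymp} — note $S_R$ and $S_L$ are designed so that $S_R(z)S_L(-z)$ (or a similar product) equals a pure Gaussian, since $S_b(x)S_b(Q-x)=1$ and the exponential factors are arranged to cancel. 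Alternatively, and more cleanly, I would deduce unitarity from the fact established in Lemma \ref{FT} and Lemma \ref{tau} that these kernels are precisely the integral kernels of the quantum dilogarithm transform appearing in \cite{FK} and \cite{BT}; the Fourier transform formulas of Lemma \ref{FT} and the tau-beta theorem of Lemma \ref{tau} are exactly the identities needed to evaluate the composition integrals. Since the transform is an isometry on the dense subspace $\cW\ox\cW$ (Proposition \ref{MTprop}(c)) with dense image, it extends uniquely to a unitary $\cH_1\ox\cH_2\to\cM\ox\cH$.

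The main obstacle I expect is the bookkeeping of contours and convergence. The kernels involve products of $G_b$-functions whose asymptotics (Proposition \ref{asymp}) are Gaussian in one imaginary direction and bounded in the other, so the integrals are only conditionally convergent and the interchanges of integration order — both in verifying the intertwining relations and in computing the composition of the two transforms — require the kind of rapid-decay arguments already used in the proof of Proposition \ref{classical intertwiner} (Fourier transforming functions in $\cW$ into rapidly decaying ones). Getting every contour placed consistently so that no pole of any $G_b$ factor is crossed when a shift is applied, and so that Fubini is justified at each step, is where the real work lies; by contrast, once the contours are fixed, the algebraic identities reduce to repeated use of the $G_b$ functional equation, the reflection property, and the tau-beta theorem. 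A secondary subtlety is checking that $\cM$ genuinely carries the trivial representation, i.e. that after the transform the $\cB_q$-action does not touch the $\a$ (or $\l$) variable at all — this follows from the $\a$-dependence of the kernels being purely through the exponential phase $e^{\pm 2\pi i\a(x-x_1)}$, which commutes with the shift and multiplication operators realizing $\D X,\D Y$.
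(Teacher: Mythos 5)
The paper offers no proof of this Proposition: it is imported from \cite{FK} (the sentence immediately preceding it reads ``It was shown in \cite{FK} that there is a quantum dilogarithm transform\dots''), and the text following the statement merely rewrites the kernels in terms of $G_b$. So there is no in-paper argument to compare against; what you have written is a reconstruction of the proof in \cite{FK} and \cite{Ka2}, and your outline of the intertwining half is correct and checkable. In the $G_b$-form of the kernel, $\bar{\ze}_b\, e^{2\pi i(x-x_1)(x_2-x_1+\a)}/G_b(Q+ix-ix_2)$, the simultaneous shift $x_j\mapsto x_j-ib$ reproduces $\ffloor{\a&x+ib\\x_1&x_2}$ exactly, so $\D X=X\ox X$ needs only the contour shift and no functional equation; for $\D Y=Y\ox X+1\ox Y$, shifting $x_2$ alone and applying $G_b(y-b)=G_b(y)/(1-e^{2\pi ib(y-b)})$ to the denominator produces $e^{2\pi bx}\bigl(1-e^{-2\pi b(x-x_2)}\bigr)+e^{2\pi bx_2}=e^{2\pi bx}$, which is exactly the telescoping you predict. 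Your observation that the $\a$-dependence sits entirely in the phase $e^{\pm 2\pi i\a(x-x_1)}$, so that $\cM$ genuinely carries the trivial action, is also right, as is the contour bookkeeping caveat.

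The one step I would not accept as written is the unitarity heuristic. The product $S_R(z)S_L(z)=G(z-ia)^2$ is not a Gaussian, and $S_R(z)S_L(-z)$ reduces to $G_b(iz)G_b(-iz)$ times a Gaussian, which the reflection property $G_b(x)G_b(Q-x)=e^{\pi ix(x-Q)}$ does \emph{not} cancel; there is no pointwise collapse of the composed kernel. What actually happens is distributional: after the $\a$-integration yields $\d(x_1-x_1')$, the remaining $x$-integral is $\int e^{2\pi i(x-x_1)(x_2'-x_2)}\,G_b(ix-ix_2)/G_b(Q+ix-ix_2')\,dx$, which the tau-beta theorem (Lemma \ref{tau}) evaluates to $G_b(ix_2'-ix_2)G_b(ix_2-ix_2')/G_b(0)$ up to a phase; the delta function $\d(x_2-x_2')$ then emerges only through the degenerate limit governed by $\lim_{x\to 0}xG_b(x)=\frac{1}{2\pi}$, i.e.\ from the pole of $G_b$ hitting the contour, via a regularization of the form $\frac{1}{2\pi}\frac{2\e}{\e^2+(x_2-x_2')^2}\to\d(x_2-x_2')$. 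So your ``alternative, cleaner'' route through Lemmas \ref{FT} and \ref{tau} is not an alternative but the actual mechanism; drop the reflection-property shortcut and carry out that computation, together with an isometry check on $\cW\ox\cW$ (the formal-inverse identity alone gives invertibility, not norm preservation, until the distributional composition is justified on the dense core).
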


Here the function $G(z)=G(b,b\inv;z)$ is the Ruijsenaars's definition of the quantum dilogarithm \cite{Ru}, and is given by
\Eq{\label{RuiG}G(z)=\exp\left(i\int_0^\oo \frac{dy}{y}\left(\frac{\sin(2yz)}{2\sinh(by)\sinh(b\inv
y)}-\frac{z}{y}\right)\right).} The relation between $G(z)$ and $G_b(z)$ is given by \Eqn[
G(z)&:=&G(b,b\inv;z)\\
S_2(z|a_+,a_-)&:=&G(a_+,a_-;-iz+ia)\\
a&:=&\frac{a_++a_-}{2}\\
S_b(z)&=&1/S_2(z|b,b\inv)\\
G_b(z)&=&e^{\frac{\pi i}{2}x(x-Q)}S_b(x)\\
Q&=&b+b\inv=2a,
]
i.e.
\Eq{\label{TeschG}G(b,b\inv,x)=e^{\pi i x^2/2}e^{\pi i Q^2/8}G_b(\frac{Q}{2}-ix).}
Hence we have, in terms of $G_b(x)$:
\begin{eqnarray}
\nonumber\ffloor{\a&x\\x_1&x_2}&=&\bar{\ze}_b e^{2\pi i(x-x_1)(x_2-x_1+\a)}e^{\pi i(x_2-x)^2}e^{\pi Q(x-x_2)}G_b(ix_2-ix)\\
&=&\bar{\ze}_b \frac{e^{2\pi i(x-x_1)(x_2-x_1+\a)}}{G_b(Q+ix-ix_2)},\\
\cceil{\a&x\\x_1&x_2}&=&\ze_b e^{-2\pi
i(x-x_1)(x_2-x_1+\a)}G_b(ix-ix_2),
\end{eqnarray}
where $$\ze_b=e^{\frac{\pi i}{4}+\frac{\pi i}{12}(b^2+b^{-2})},\tab \bar{\ze}_b=e^{-\frac{\pi i}{4}-\frac{\pi
i}{12}(b^2+b^{-2})}.$$
\section{Classical Limit of $q$-Intertwiners}\label{sec:mainresult}
In this section, we will compare the quantum dilogarithm transformation and the classical $ax+b$ group intertwiners, and show that they correspond to each other under the limiting procedures suggested in Section \ref{sec:special:limit}.

\subsection{Fourier transform of the $q$ Intertwiners}\label{sec:mainresult:FT}
In order to compare with the classical case, we need to take the Fourier transform of both function spaces $\cH_1\ox \cH_2$ and $\cM\ox \cH$. In order to do this correctly, it turns out that we need to modify the kernel by

\Eq{\ffloor{\a&x\\x_1&x_2}_*:=\frac{\bar{\ze_b}e^{-\pi i(x-x_1)^2}}{G_b(\frac{Q}{2}+i\a)}\ffloor{\a&x\\x_1&x_2},}
and
\Eq{\cceil{\a&x\\x_1&x_2}_*:=\ze_b e^{\pi i(x-x_1)^2}G_b(\frac{Q}{2}+i\a)\ffloor{\a&x\\x_1&x_2}.}

The extra factors depend only on $\a$ and $(x-x_1)$, hence the integral kernels are still an intertwiner. Note that $G_b(\frac{Q}{2}+i\a)$ is unitary by the complex conjugation property, so that the intertwiners are still unitary operators.

\begin{Thm} Under the Fourier transform, the intertwining maps defined on $f,\phi\in\cW\ox\cW$ become:

\Eq{\phi(\l,t)=\int_C \frac{G_b(it_2-it+i\l)G_b(-it_2-i\l)}{G_b(-it)} e^{\pi i \l(\l-2t+2t_2)}f(t-t_2,t_2)dt_2,}
\Eq{f(t_1,t_2)=\int_{C'} \frac{G_b(-i\l+it_1)G_b(i\l+it_2)}{G_b(it)} e^{\pi i \l(\l+2t_2)}e^{-2\pi i t_1 t_2}\phi(\l,t_1+t_2)d\l,}
where $C$ is the contour going along $\R$ that goes above
the poles of $\G_b(-it_2-i\l)$ and below the poles of $\G_b(it_2-it+i\l)$, and similarly $C'$ is the contour along $\R$ that goes above the poles of $\G_b(-i\l+it_1)$ and below the poles of $\G_b(i\l+it_2)$.

Hence formally we can write the above transformations as integral transformations:

\Eq{\phi(\l, t)=\iint \cF\ffloor{\l&t\\t_1&t_2}_*f(t_1,t_2)dt_1dt_2}
\Eq{f(t_1,t_2)=\iint\cF\cceil{\l&t\\t_1&t_2}_*\phi(\l,t)d\l dt} 
where the kernels are expressed as
\begin{eqnarray}\cF\ffloor{\l&t\\t_1&t_2}_*&=&\d(t_1+t_2-t)\frac{G_b(-it_1+i\l)G_b(-it_2-i\l)}{G_b(-it)} e^{\pi i \l(\l-2t_1)},\\
\cF\cceil{\l&t\\t_1&t_2}_*&=&\d(t-t_1-t_2)\frac{G_b(-i\l+it_1)G_b(it_2+i\l)}{G_b(it)} e^{\pi i \l(\l+2t_2)}e^{-2\pi i
t_1 t_2}.\nonumber\\
\end{eqnarray}

They are still intertwiners with respect to the Fourier transformed quantum plane
\Eq{\widehat{X}=e^{2\pi bx}\tab\widehat{Y}=e^{2\pi b p}} with the same coproduct.
\end{Thm}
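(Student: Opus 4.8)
The plan is to compute the Fourier transforms of the two sides of Proposition~\ref{QDTrans} directly, using the explicit realization $X = e^{-2\pi bp}$, $Y = e^{2\pi bx}$ on $\cH = L^2(\R)$, and to identify the resulting kernels via the Fourier transformation formulas of Lemma~\ref{FT} and the conversion \eqref{TeschG} between Ruijsenaars's $G(z)$ and Faddeev's $G_b(z)$. Concretely, on the $\cH_1 \ox \cH_2$ side the Fourier transform is taken in both variables $x_1, x_2$, and on the $\cM \ox \cH$ side in the variable $x$ only (the multiplicity variable $\a$ already playing the role of a Mellin/Fourier parameter, as in the classical Proposition~\ref{classical intertwiner}). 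First I would substitute the closed form
$$\ffloor{\a & x \\ x_1 & x_2} = \bar\ze_b \, \frac{e^{2\pi i(x-x_1)(x_2-x_1+\a)}}{G_b(Q+ix-ix_2)}$$
and its starred modification
$$\ffloor{\a & x \\ x_1 & x_2}_* = \frac{\bar\ze_b \, e^{-\pi i(x-x_1)^2}}{G_b(\frac{Q}{2}+i\a)}\,\ffloor{\a & x \\ x_1 & x_2}$$
into the transform, write out the defining Fourier integrals with variables renamed $(x_1, x_2, x) \mapsto$ dual variables $(t_1, t_2, t)$ and the multiplicity variable relabeled $\a \mapsto \l$, and collect the Gaussian phases.

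The key computational step is to recognize each one-dimensional integral as an instance of Lemma~\ref{FT}. The factor $1/G_b(Q + ix - ix_2)$ against the oscillatory exponential in $x$ (or $x_2$) should, after the Gaussian completion coming from the $e^{-\pi i(x-x_1)^2}$ correction and the $e^{\pi i t^2}$-type phases in $S_R, S_L$, match the left-hand side of the first formula in Lemma~\ref{FT}, namely $\int_{\R+i0} dt\, e^{2\pi i tr}\, e^{-\pi i t^2}/G_b(Q+it) = g_b(e^{2\pi br})$, which is why the correction factor was chosen to involve $e^{\mp\pi i(x-x_1)^2}$ and $G_b(\frac{Q}{2}+i\a)^{\mp 1}$ exactly: these are precisely the ingredients turning a $G_b$ in the "wrong" argument into one in the argument $\frac{Q}{2} + i(\cdot)$ demanded by the Fourier formula. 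After performing the $x$-integral (resp.\ the $x_1$-integral), the remaining $t_2$-integral (resp.\ $\l$-integral) should reorganize into the stated kernel
$$\frac{G_b(it_2 - it + i\l)\,G_b(-it_2 - i\l)}{G_b(-it)}\, e^{\pi i\l(\l - 2t + 2t_2)},$$
with the $\delta(t_1 + t_2 - t)$ appearing from the $x$-integral producing a delta in the total dual variable, matching the classical $\delta$-factor in Proposition~\ref{classical intertwiner}. I would then cross-check the result against Theorem~\ref{limit}: under $b^2 = ir \to i0^+$ with the rescalings of Theorem~\ref{interlimit}, $(2\pi b)G_b(bx)/(-2\pi ib^2)^x \to \G(x)$, so the quantum kernel should visibly degenerate to the classical gamma-function kernel of Proposition~\ref{classical intertwiner}; this is both a sanity check and, read backwards, a guide to getting all phase factors right.

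For the final sentence — that the transformed kernels are still intertwiners for $\what[X] = e^{2\pi bx}$, $\what[Y] = e^{2\pi bp}$ with the same coproduct — the argument is essentially formal: the Fourier transform $\cF$ on $L^2(\R)$ conjugates $X = e^{-2\pi bp}$ to $e^{2\pi bx}$ and $Y = e^{2\pi bx}$ to $e^{2\pi bp}$ (since $\cF$ exchanges $x$ and $p$ up to sign, and $\cW$ is a core for all these operators by Proposition~\ref{MTprop}(d)), so if $T : \cH_1 \ox \cH_2 \to \cM \ox \cH$ intertwines the $\cB_q$-action built from $\D X = X \ox X$, $\D Y = Y \ox X + 1 \ox Y$, then $(\cF \ox \cF)\, T\, (\cF \ox \cF)^{-1}$ (with $\cF$ acting trivially on $\cM$ beyond the single Fourier transform in $x$) intertwines the action built from the same coproduct formulas applied to $\what[X], \what[Y]$. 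One only needs to note that the starred modification multiplies the kernel by a function of $\a$ and of $x - x_1$ alone, and both $\a$ and $x - x_1$ are unchanged by the relevant coproduct action (the coproduct of $X, Y$ shifts $x_1, x_2$ and $x$ uniformly, preserving differences and the multiplicity variable), so the modified map remains an intertwiner and, since $G_b(\frac{Q}{2} + i\a)$ is a phase by the complex conjugation property in Proposition~\ref{asymp}, remains unitary; unitarity then extends it from $\cW \ox \cW$ to the whole Hilbert spaces.

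The main obstacle I anticipate is not conceptual but bookkeeping: correctly tracking the numerous Gaussian phase factors ($e^{\pm\pi i z^2}$, $e^{\pm i\chi}$, the $\bar\ze_b$ versus $\ze_b$ normalizations, the $e^{\pm 2\pi i zw}$ in $\cE_{R}, \cE_L$) through two nested contour integrals and two Fourier transforms, and verifying that the contour prescriptions for $x_2$ and $x$ (going below the respective poles) transform into the stated prescriptions for $C$ and $C'$ (above the poles of $G_b(-it_2 - i\l)$, below those of $G_b(it_2 - it + i\l)$, etc.) — in particular justifying the Fubini-type interchanges of integration order, which, exactly as in the proof of Proposition~\ref{classical intertwiner}, will require first working in a shifted strip $\Im(t) > c_2 > 0$ where absolute convergence holds (using the rapid decay of functions in $\cW$ from Proposition~\ref{MTprop}(a) and the asymptotics of $G_b$ from Proposition~\ref{asymp}) and only afterwards deforming the contours back to $\R$ by analytic continuation.
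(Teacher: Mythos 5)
Your proposal matches the paper's proof in all essentials: Fourier transform both sides, evaluate the inner $x_2$- and $x_1$-integrals via Lemma \ref{FT}, justify the Fubini interchanges by working first in shifted strips where the $G_b$-asymptotics and the rapid decay of functions in $\cW$ give absolute convergence, then deform the contours back to $\R$ by analytic continuation, and obtain the second kernel as the complex conjugate of the first. The one inaccuracy is your claim that every one-dimensional integral is an instance of Lemma \ref{FT}: in the paper the final integration over the multiplicity variable $\a$ is a tau-beta integral (Lemma \ref{tau}), followed by the reflection property $G_b(x)G_b(Q-x)=e^{\pi i x(x-Q)}$, and it is this last pair of steps that produces the stated ratio $G_b(it_2-it+i\l)G_b(-it_2-i\l)/G_b(-it)$.
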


\begin{proof} The intertwining properties are clear, since $\what[Y]\ox \what[X]$ are commutative with respect to $t_1, t_2$,
and Fourier transformation is linear, hence it preserves the action of$$\D\what[Y]=\what[Y]\ox \what[X]+1\ox \what[Y].$$
The delta distribution explains the intertwining property for $\D\what[X]=\what[X]\ox \what[X]$ explicitly.

We will calculate the integral transform using the Fourier transform property (Lemma \ref{FT}) and tau-beta integral (Lemma \ref{tau}) repeatedly. First we take the Fourier transform of $f(t_1,t_2)$:
$$\iint_{\R^2}e^{-2\pi i t_1 x_1}e^{-2\pi i t_2 x_2}f(t_1,t_2)dt_2dt_1,$$
applying the kernel:
$$\int_\R\int_{\R-i0}\iint_{\R^2}\frac{\bar{\ze_b^2}e^{-\pi i(x-x_1)^2}}{G_b(\frac{Q}{2}+i\a)}\frac{e^{2\pi i(x-x_1)(x_2-x_1+\a)}}{G_b(Q+ix-ix_2)}e^{-2\pi i t_1 x_1}e^{-2\pi i t_2 x_2}f(t_1,t_2)dt_2dt_1dx_2 dx_1,$$
and take the Fourier transform back to the target space
\begin{eqnarray}
\nonumber\phi(\l,t)&=&\iint_{\R^2}\int_\R\int_{\R-0}\iint_{\R^2}\frac{\bar{\ze_b^2}e^{-\pi i(x-x_1)^2}}{G_b(\frac{Q}{2}+i\a)}\frac{e^{2\pi i(x-x_1)(x_2-x_1+\a)}}{G_b(Q+ix-ix_2)}\cdot\\
\nonumber&&e^{-2\pi i t_1 x_1}e^{-2\pi i t_2 x_2}e^{2\pi i t x}e^{2\pi i\l\a}f(t_1,t_2)dt_2dt_1dx_2 dx_1dx d\a.\\
\end{eqnarray}

The integrand is absolutely convergent in $t_1$ and $t_2$ because $f(t_1,t_2)\in~\cW\ox~\cW$. With respect to $x_2$, using the asymptotic properties for $G_b$, we see that the absolute value of the integrand has the growth
$$\left\{\begin{array}{cc}e^{2\pi \Im(t_2)x_2}& x_2\to -\oo\\ e^{-\pi Qx_2}e^{2\pi \Im(t_2)x_2}&x_2\to +\oo\end{array}\right..$$

Hence it is absolutely convergent for $$0<\Im(t_2)<\frac{Q}{2},$$ and we can interchange the order of integration to obtain
\begin{align*}
\phi(\l,t)=&\int_{\R^5}\int_{\R-i0}\frac{\bar{\ze_b^2}e^{-\pi i(x-x_1)^2}}{G_b(\frac{Q}{2}+i\a)}\frac{e^{2\pi i(x-x_1)(x_2-x_1+\a)}}{G_b(Q+ix-ix_2)}\cdot\\
&e^{-2\pi i t_1 x_1}e^{-2\pi i t_2 x_2}e^{2\pi i t x}e^{2\pi i\l\a}dx_2  f(t_1,t_2)dt_2dt_1dx_1dx d\a\\
\intertext{Substituting $x_2$ by $x-x_2$:}
=&\int_{\R^5}\int_{\R+i0} \bar{\ze}_b^2 \frac{e^{-\pi i(x-x_1)^2}e^{2\pi i(x-x_1)(x-x_2-x_1+\a)}}{G_b(\frac{Q}{2}+i\a)G_b(Q+ix_2)}\cdot\\
&e^{-2\pi it_1x_1} e^{-2\pi i(x-x_2)t_2}e^{2\pi it x}e^{2\pi i \l\a}f(t_1,t_2)dt_2dt_1dx_1dx_2dxd\a.
\end{align*}

The relevant exponential with respect to $x_2$ is $$e^{2\pi ix_2(x_1+t_2-x)}.$$
Using Lemma \ref{FT}, integrating over $x_2$  with $r=x_1+t_2-x-iQ/2$, the integrand becomes
$$=\bar{\ze_b}\frac{G_b(ix-it_2-ix_1)}{G_b(\frac{Q}{2}+i\a)} e^{-\pi i(x-x_1)^2}e^{2\pi i(x-x_1)(x-x_1+\a)}e^{-2\pi it_1x_1}e^{-2\pi ixt_2}e^{2\pi it x}e^{2\pi i \l\a}f(t_1,t_2)$$
$$=\bar{\ze_b}\frac{G_b(ix-it_2-ix_1)}{G_b(\frac{Q}{2}+i\a)} e^{\pi i(x-x_1)^2}e^{2\pi i(x-x_1)\a}e^{-2\pi it_1x_1}e^{-2\pi ixt_2}e^{2\pi it x}e^{2\pi i \l\a}f(t_1,t_2).$$
Now the absolute value of this integrand with respect to $x_1$ has asymptotics
$$\left\{\begin{array}{cc}e^{2\pi \Im(t_1)x_1}& x_1\to -\oo\\ e^{-\pi Qx_1}e^{2\pi (\Im(t_1)+\Im(t_2))x_1}&x_1\to +\oo\end{array}\right..$$
Hence the integral with respect to  $x_1$ is absolutely convergent when
$$\Im(t_1)>0,\tab \Im(t_1+t_2)<\frac{Q}{2}.$$
So we now have
\begin{eqnarray*}
\phi(\l,t)&=&\int_{\R^5}\bar{\ze_b}\frac{G_b(ix-it_2-ix_1)}{G_b(\frac{Q}{2}+i\a)} e^{\pi i(x-x_1)^2}e^{2\pi i(x-x_1)\a}\cdot\\
&&e^{-2\pi it_1x_1}e^{-2\pi ixt_2}e^{2\pi it x}e^{2\pi i \l\a}f(t_1,t_2)dx_1dt_2dt_1dxd\a.\\
&&\mbox{Substitute $x_1$ by $-x_1-t_2+x$:}\\
&=&\int_{\R^4}\int_{\R-i\Im(t_2)}\bar{\ze_b}\frac{G_b(ix_1)}{G_b(\frac{Q}{2}+i\a)} e^{\pi i (x_1+t_2)^2} e^{2\pi i(x_1+t_2)\a}e^{-2\pi it_1(x-t_2-x_1)}\cdot\\
&&e^{-2\pi ixt_2}e^{2\pi it x}e^{2\pi i \l\a}f(t_1,t_2)dx_1dt_2dt_1dxd\a.
\end{eqnarray*}
The relevant exponential with respect to  $x_1$ is $$e^{-2\pi i x_1(-t_1-t_2-\a)}e^{\pi i x_1^2}.$$
Hence using Lemma \ref{FT}, integrating over $x_1$ (valid since $\Im(t_2)>0$) with $r=-t_1-t_2-\a$, the integrand becomes:
$$\frac{G_b(\frac{Q}{2}+it_1+it_2+i\a)}{G_b(\frac{Q}{2}+i\a)} e^{\pi i t_2^2}e^{2\pi i t_2\a}e^{2\pi i t_1t_2}e^{-2\pi i x(t_1+t_2-t)}e^{2\pi i \l\a}f_1(t_1)f_2(t_2).$$
Now we can simplify the integration with respect to  $t_1$ and $x$ using the factor $e^{-2\pi i x(t_1+t_2-t)}$, which is just a Fourier transform and its inverse, to obtain
$$\phi(\l,t)=\iint_{\R^2}\frac{G_b(\frac{Q}{2}+it+i\a)}{G_b(\frac{Q}{2}+i\a)} e^{\pi i t_2^2}e^{2\pi i t_2\a}e^{2\pi i (t-t_2)t_2}e^{2\pi i \l\a}f(t-t_2,t_2)dt_2d\a.$$
Now the absolute value of the integrand has asymptotics
$$\left\{\begin{array}{cc}e^{2\pi \Im(t)\a}& \a\to -\oo\\ e^{-2\pi \Im(t_2)\a}&\a\to +\oo\end{array}\right..$$
Hence it is absolutely convergent when $\Im(t)>0$. We do the final interchange of order of integration and integrate
with respect to  $\a$:
\begin{eqnarray*}
\phi(\l,t)&=&\iint_{\R^2}\frac{G_b(\frac{Q}{2}+it+i\a)}{G_b(\frac{Q}{2}+i\a)} e^{\pi i t_2^2}e^{2\pi i t_2\a}e^{2\pi i (t-t_2)t_2}e^{2\pi i \l\a}f(t-t_2,t_2)d\a dt_2\\
&&\mbox{Shifting the contour of $\a$ by $\a\to\a-i\frac{Q}{2}$ we get}\\
&=&\int_{\R}\int_{\R+i0}\frac{G_b(Q+it+i\a)}{G_b(Q+i\a)} e^{\pi i t_2^2}e^{2\pi i t_2\a}e^{\pi t_2 Q}e^{2\pi i (t-t_2)t_2}e^{2\pi i \l\a}e^{\pi \l Q}\\
&&f(t-t_2,t_2)d\a dt_2.
\end{eqnarray*}
The relevant exponential for $\a$ is $$e^{-2\pi \a(-it_2-i\l)},$$
therefore using the tau-beta integral (Lemma \ref{tau}) again, the integrand becomes:
$$\frac{G_b(Q+it)G_b(-it_2-i\l)}{G_b(Q+it-it_2-i\l)} e^{\pi i t_2^2}e^{\pi t_2 Q}e^{2\pi i (t-t_2)t_2}e^{\pi \l Q}f(t-t_2,t_2).$$
Finally using the reflection property $G_b(x)G_b(Q-x)=e^{\pi i x(x-Q)}$, we obtain
$$\frac{G_b(it_2-it+i\l)G_b(-it_2-i\l)}{G_b(-it)} e^{\pi i \l(\l-2t+2t_2)}f(t-t_2,t_2).$$

Therefore we have the expression
$$\phi(\l,t)=\int_{\R+ic_2}\frac{G_b(it_2-it+i\l)G_b(-it_2-i\l)}{G_b(-it)} e^{\pi i \l(\l-2t+2t_2)}f(t-t_2,t_2)dt_2,$$
valid for $0<c_2<\frac{Q}{2}$ and $\Im(t)>0$.

By a shift of contour on $t_2$ so that it goes below the pole at $t_2=t-\l$ and above the poles at $t_2=-\l$, the expression can be analytically continued to $t\in\R$, hence we can rewrite it as
$$\phi(\l,t)=\int_{C}\frac{G_b(it_2-it+i\l)G_b(-it_2-i\l)}{G_b(-it)} e^{\pi i \l(\l-2t+2t_2)}f(t-t_2,t_2)dt_2\in \cM\ox\cH$$
with the desired contour.

Working formally, for $\cF\cceil{\l&t\\t_1&t_2}_*$, the target space
is $f(\l, t)$ and the domain space is $f(t_1,t_2)$. Since Fourier transform
of complex conjugation is the complex conjugation of the inverse
Fourier transform, $\cF\cceil{\l&t\\t_1&t_2}_*$ is just the complex
conjugation of $\cF\ffloor{\l&t\\t_1&t_2}_*$. Hence we have
\begin{eqnarray*}
\cF\left\lceil\begin{array}{cc}\l&t\\t_1&t_2\end{array}\right\rceil_*&=&\d(t_1+t_2-t)\frac{G_b(-i\l+it_1)G_b(it_2+i\l)}{G_b(it)}e^{-\pi i\l(\l-2t_1)}\cdot\\
&&e^{\pi i(-it_1+i\l)(Q+it_1-i\l)}e^{\pi i(it_2+i\l)(Q-it_2-i\l)}e^{-\pi i(it_1+it_2)(Q-it_1-it_2)}\\
&=&\d(t_1+t_2-t)\frac{G_b(-i\l+it_1)G_b(it_2+i\l)}{G_b(it)} e^{\pi i \l(\l+2t_2)}e^{-2\pi i t_1 t_2}.
\end{eqnarray*}

Alternatively we can work through the integrations as in the proof above using similar techniques of interchanging orders of integration and shifting of contours.
\end{proof}

\subsection{Classical Limit}\label{sec:mainresult:limit}
\begin{Thm}\label{interlimit} Under a suitable rescaling, as $b^2\to i0^+$, or more generally, as $q\to1$ from inside the unit disk, the quantum intertwining operator has a limit towards the classical intertwining
transformation given by Proposition \ref{classical intertwiner}.
\end{Thm}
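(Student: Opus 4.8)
The plan is to read the limit in Theorem~\ref{interlimit} directly off the explicit Fourier–transformed kernels of the previous theorem, whose structure is already in position-by-position bijection with the $\G$-kernels of Proposition~\ref{classical intertwiner}: the three factors $G_b(-it_1+i\l),\,G_b(-it_2-i\l),\,G_b(-it)$ in $\cF\ffloor{\l&t\\t_1&t_2}_*$ mirror $\G(i\l-it_1),\,\G(-it_2-i\l),\,\G(-it)$, and similarly for the second kernel $\cF\cceil{\l&t\\t_1&t_2}_*$. Accordingly I would carry out the rescaling $(\l,t,t_1,t_2)\mapsto(b\l,bt,bt_1,bt_2)$ of all arguments and multiply the kernel by $b^2$; equivalently, in the one–dimensional integral form $\phi(\l,t)=\int_C\cdots f(t-t_2,t_2)\,dt_2$ one substitutes $t_2\mapsto bt_2$, $\l\mapsto b\l$, $t\mapsto bt$, so that $dt_2\mapsto b\,dt_2$, and reparametrises $f$ by $f(bt_1,bt_2)$.

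Next, to each rescaled factor I would apply Theorem~\ref{limit} in the form $(2\pi b)G_b(bY)\sim(-2\pi ib^2)^Y\G(Y)$, which holds uniformly on compact subsets of $\C$. The three resulting power factors combine to $(-2\pi ib^2)^{i(t-t_1-t_2)}$, whose exponent is proportional to $t-t_1-t_2$ and therefore vanishes on the support of $\d(t_1+t_2-t)$ — equivalently, on the constraint imposed by the $t_2$-integral — so they disappear in the limit. The residual non-$\G$ exponentials are $e^{\pi i\l(\l-2t_1)}\mapsto e^{\pi ib^2\l(\l-2t_1)}=e^{-\pi r\l(\l-2t_1)}\to1$ and, for the second kernel, also $e^{-2\pi it_1t_2}\mapsto e^{-2\pi ib^2t_1t_2}=e^{2\pi rt_1t_2}\to1$; there are no surviving unimodular constants, the $\ze_b$ factors having already cancelled in the derivation of these kernels. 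Finally the powers of $b$ collect: the net $(2\pi b)^{-1}$ from the three $G_b$-asymptotics, together with the $b$ from $dt_2\mapsto b\,dt_2$ (respectively, with the prefactor $b^2$ against the formal $b^{-1}$ from rescaling the argument of $\d$), leave exactly the constant $\tfrac1{2\pi}$ — with $b=e^{\pi i/4}\sqrt r$ the phases cancel precisely — in front of $\frac{\G(i\l-it_1)\G(-it_2-i\l)}{\G(-it)}\,\d(t_1+t_2-t)$, which is the classical kernel $\ffloor{\l&t\\t_1&t_2}$ of Proposition~\ref{classical intertwiner}. The statement $\cF\cceil{\l&t\\t_1&t_2}_*\to\cceil{\l&t\\t_1&t_2}$ then follows in the same way, or simply by taking complex conjugates, since in both the quantum and the classical setting the $\cceil{\cdot}$ kernel is the conjugate of the $\ffloor{\cdot}$ kernel.

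To promote this (locally uniform) convergence of kernels to convergence of the intertwining transformations on the dense domain $\cW\ox\cW$, I would justify interchanging the limit $r\to0$ with the contour integral by dominated convergence: the rapid decay of $f\in\cW$ along the relevant rays (Proposition~\ref{MTprop}) combined with the asymptotics $G_b(x)\to\bar\ze_b$ as $\Im x\to+\oo$ and $G_b(x)\sim\ze_be^{\pi ix(x-Q)}$ as $\Im x\to-\oo$ should provide a dominating function that is integrable uniformly for $r$ near $0$, after first deforming $C$ so that it separates the relevant poles of the $G_b$-factors for all small $r$. As $r\to0$ the poles of $G_b(b\,\cdot)$, located at $-n-mb^{-2}=-n+im/r$, escape to $+i\oo$ for $m\ge1$, while the $m=0$ family $-n$ remains fixed and coincides with the poles of $\G$; hence $C$ can be taken to limit onto the contour of Proposition~\ref{classical intertwiner}, and the analytic continuation in $t$ (respectively $t_2$) used there is compatible with the one used in the quantum derivation.

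The step I expect to be the main obstacle is precisely this last one. Because the rescaling multiplies all arguments by the complex number $b=e^{\pi i/4}\sqrt r$, the contour of integration is rotated off the real axis, so that the Gaussian pieces both in the asymptotics of $G_b$ and in the $\cW$-functions no longer produce decay in an obvious way; one must track the argument of $bx$ carefully along $C$ and show that the exponential growth of $G_b(bx)$ in the ``bad'' direction is dominated by the decay of $f(b(t-t_2),bt_2)$, uniformly in $r$, before dominated convergence can be invoked. Once this uniform integrability is in place, the remaining ingredients — term-by-term application of Theorem~\ref{limit}, the vanishing of the power factors on the $\d$-support, and the bookkeeping of $b$-powers — are routine.
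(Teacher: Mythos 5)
Your proposal matches the paper's proof essentially exactly: the paper rescales all variables by $b$, multiplies by $b^2$, writes each of the three $G_b$-factors in the normalized form $(2\pi b)G_b(b\,\cdot)/(-2\pi ib^2)^{\cdot}$ so that the power factors cancel on the support of the delta, notes $e^{\pi ib^2\l(\l-2t_1)}\to1$, and collects the powers of $b$ into $\tfrac{1}{2\pi}$, obtaining the classical kernel (with the $\cceil{\cdot}$ case handled the same way). The only difference is that the paper stops at the formal kernel-level limit, simply asserting that the contours agree, so your closing discussion of dominated convergence and the rotated contour addresses a point the paper leaves unjustified rather than diverging from its method.
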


\begin{proof}
 The contour of integration is the same for the quantum and the classical intertwining transform. Therefore it
suffices to do the limit formally for the intertwiners. First of all we need to rescale the function space
$\cH=~L^2(\R)$ by $b$ on all the variables (including the parameter $\l$), hence the kernel is now given by

\begin{eqnarray*}
b^2\cF\ffloor{b\l&bt\\bt_1&bt_2}_*&=&b^2\d(b(t_1+t_2-t))\frac{G_b(-ibt_1+ib\l)G_b(-ibt_2-ib\l)}{G_b(-ibt)} e^{\pi i
b^2\l(\l-2t_1)}\\
&=&\frac{b\d(t_1+t_2-t)}{2\pi b}\frac{(2\pi b)G_b(-ibt_1+ib\l)}{(-2\pi ib^2)^{-it_1+i\l}}\frac{(2\pi b)G_b(-ibt_2-ib\l)}{(-2\pi ib^2)^{-it_2-i\l}}\cdot\\
&&\frac{(-2\pi ib^2)^{-it_1-it_2}}{(2\pi b)G_b(-ibt_1-ibt_2)} e^{\pi i b^2\l(\l-2t_1)}\\
&&\mbox{taking the limit using Theorem \ref{limit}:}\\
&\to&\frac{\d(t_1+t_2-t)}{2\pi}\frac{\G(-it_1+i\l)\G(-it_2-i\l)}{\G(-it_1-it_2)}\\
&=&\frac{1}{2\pi}\d(t_1+t_2-t)\frac{\G(-it_1+i\l)\G(-it_2-i\l)}{\G(-it)},
\end{eqnarray*}
which is precisely the classical intertwiner
$\ffloor{\l&t\\t_1&t_2}_{classical} $.

Similarly, we have
$$b^2\cF\cceil{b\l&bt\\bt_1&bt_2}_*\to\frac{1}{2\pi}\d(t_1+t_2-t)\frac{\G(-i\l+it_1)\G(it_2+i\l)}{\G(it)}=\cceil{\l&t\\t_1&t_2}_{classical}.$$
\end{proof}
\section {Corepresentation}\label{sec:corep}

In order to compare the classical representation of the $ax+b$ group, and shed light on what kind of intertwiners the above transforms are, as explained in the introduction we need to find a corepresentation of the quantum plane  $\cA_q$ generated by positive self-adjoint elements $A,B$ with $AB=q^2BA$, $|q|$=1, dual to $\cB_q$, with the same coproduct given by
 \Eqn[
\D(A)&=&A\ox A,\\
\D(B)&=&B\ox A+1\ox B. ]
The corepresentation should possess a limit that goes to the classical representation. Since the action of $\cB_q$ above is a left action, we expect to obtain a right corepresentation of $\cA_q$.

The basic idea is to define a $C^*$-algebra $C_\oo(\cA_q)$ of ``functions vanishing at infinity" of the quantum plane $\cA_q$. The technical details are given in \cite{Ip}. Here we will briefly recall the motivation and its construction.

\subsection{Algebra of Continuous Functions Vanishing at Infinity}\label{sec:corep:cont}
Before defining $C_\oo(\cA_q)$, let's look at the classical $ax+b$ group again. Denote the group by $G$ and the positive semigroup by $G_+=\{(a,b)|a>0, b>0\}$.

Consider the restriction of a rapidly decreasing analytic function $f(a,b)$ of $G$, to the semigroup $G_+$. Then the function is continuous at $b=0$, hence it has at most $O(1)$ growth as $b\to 0^+$.

Hence using the Mellin transform we can write
\Eq{f(a,b)=\int_{-i\oo}^{i\oo}\int_{c-i\oo}^{c+i\oo}F(s,t)a^{-s}b^{-t}dtds} where $c>0$ and
\Eq{F(s,t)=\frac{1}{(2\pi)^2}\int_0^\oo\int_0^\oo f(a,b)a^{s-1}b^{t-1}dadb}
 is entire analytic with respect to $s$, and
holomorphic on $\Im(t)>0$. According to Proposition \ref{MTpole}, $F(s,t)$ has rapid decay in $s,t$ in the imaginary
direction, and can be analytically continued to $\Im(t)\leq 0$ such that it is meromorphic with simple poles. Since the
function $f(a,b)$ is analytic at $b=0$, the analytic structure of $f(a,b)$ on $b$ is given by $\sum_{k=0}^\oo A_k b^k$
for some constant $A_k$, hence according to Proposition \ref{MTpole}, $F(s,t)$ has possible simple poles at $t=-n$ for
$n=0,1,2,...$.

Therefore (changing the integration to the real axis) we conclude that

\begin{Prop}
The continuous functions of $G_+$, continuous at $b=0$ and vanishing at infinity, is given by
$$C_\oo(G)|_{G_+}=\mbox{sup norm closure of $\cA^\oo(G_+)$ }$$
where
\Eq{\cA^\oo(G_+):=\mbox{Linear span of} \left\{\int_\R\int_{\R+i0}f_1(s)f_2(t)a^{is}b^{it}dsdt\right\}}
for
$f_1(s)$ entire analytic in $s$, $f_2(t)$ meromorphic in $t$ with possible simple poles at $t\in-in$, $n=0,1,2...$, and
for fixed $v>0$, both the function $f_1(s+iv)$ and $f_2(t+iv)$ is of rapid decay.
\end{Prop}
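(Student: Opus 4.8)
The plan is to establish the two halves of the statement separately: first identify the precise analytic class of functions one obtains by restricting $C_\oo(G)$ to the positive semigroup $G_+$, and then recognize that this class is exactly the sup-norm closure of the span $\cA^\oo(G_+)$ described in the Proposition. I would start from the observation already made in the preceding discussion: a rapidly decreasing analytic function $f(a,b)$ on the full group $G$, when restricted to $G_+=\{(a,b):a>0,\,b>0\}$, stays analytic across $b=0$, so its Taylor expansion in $b$ at the origin reads $f(a,b)\sim\sum_{k\ge 0}A_k(a)b^k$ with $\Re(k)$ increasing to $\oo$. Applying the analytic-continuation clause of Proposition~\ref{MTpole} in the $b$-variable, the Mellin transform $F(s,t)=\frac{1}{(2\pi)^2}\int_0^\oo\int_0^\oo f(a,b)a^{s-1}b^{t-1}\,da\,db$ is holomorphic for $\Im(t)>0$ and continues meromorphically into $\Im(t)\le 0$ with at worst simple poles located precisely at $t=-in$, $n=0,1,2,\dots$, with residues governed by the $A_k$. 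In the $a$-variable there is no such constraint — $f$ is rapidly decreasing as $a\to 0^+$ and $a\to\oo$ in every direction — so $F(s,t)$ is entire in $s$. The growth clause of Proposition~\ref{MTpole} then gives rapid decay of $F(s,t)$ in the imaginary directions of both variables, uniformly on strips, which is exactly the decay condition ``$f_1(s+iv)$ and $f_2(t+iv)$ of rapid decay'' once one writes $F$ as a (limit of) products $f_1(s)f_2(t)$.

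The second half is to run this correspondence backwards: given any $f_1(s)$ entire with rapid imaginary decay and $f_2(t)$ meromorphic with simple poles only at $t\in -i\N_{\ge 0}$ and rapid imaginary decay on shifted lines, the integral $\int_\R\int_{\R+i0}f_1(s)f_2(t)a^{is}b^{it}\,ds\,dt$ — by Mellin inversion and the pole structure — reproduces a function on $G_+$ that is continuous (indeed analytic) at $b=0$, extends continuously to all of $G$, and vanishes at infinity. Here I would invoke the strip-of-analyticity and growth parts of Proposition~\ref{MTpole} in the inverse direction: the rapid decay in $s$ makes the $a$-integral converge and produce a Schwartz-type function in $\log a$; the rapid decay in $t$ on the line $\R+i0$ makes the $b$-integral converge to a bounded continuous function, and shifting the $t$-contour upward past no poles (since all poles lie at $\Im(t)\le 0$) while picking up the residues at $t=-in$ when shifting \emph{downward} exhibits the analytic Taylor behavior $\sum A_k b^k$ at $b=0$. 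That both constructions are mutually inverse is the Mellin inversion theorem quoted earlier. Taking sup-norm closure on both sides then matches $C_\oo(G)|_{G_+}$ with the closure of $\cA^\oo(G_+)$.

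The step I expect to be the main obstacle is making rigorous the passage between ``functions'' and ``functions written as a single separated product $f_1(s)f_2(t)$'': a general $F(s,t)$ satisfying the analyticity and decay constraints is not literally a product, only a limit of linear combinations of products, so one must argue that finite sums $\sum_j f_1^{(j)}(s)f_2^{(j)}(t)$ are dense (in the relevant topology, ultimately sup norm after Mellin inversion) among all admissible $F$. This is where one needs a density argument — e.g. approximating $F$ on strips by tensor products via a Fourier/Fej\'er-type argument in the imaginary variables, using the rapid decay to control tails — and care that the approximants keep the pole locations confined to $t\in -i\N_{\ge 0}$. A secondary technical point is checking ``vanishing at infinity'' uniformly: one must verify that the sup-norm of the tail of the integral over large $|a|$ or $b\to\oo$ genuinely goes to zero, which again follows from the Mellin growth estimates but should be stated. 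Once the density and the uniform decay are in hand, the rest is bookkeeping with Proposition~\ref{MTpole} and the Mellin inversion formula, so I would present those as the load-bearing lemmas and keep the product-approximation argument as the one genuinely new ingredient.
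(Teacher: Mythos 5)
Your proposal follows essentially the same route as the paper: the paper's ``proof'' is precisely the Mellin-transform discussion preceding the Proposition (entirety in $s$, meromorphic continuation in $t$ with simple poles forced at $t=-in$ by the Taylor expansion of $f(a,b)$ at $b=0$, and the decay estimates of Proposition~\ref{MTpole}), after which the statement is simply asserted. You are in fact more thorough than the paper, which treats only the forward direction and does not address the converse inclusion or the density of separated products $\sum_j f_1^{(j)}(s)f_2^{(j)}(t)$ among admissible kernels $F(s,t)$ --- the point you correctly identify as the genuine technical gap.
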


Note that this also coincides with
$$C_\oo(G)|_{G_+}=\mbox{sup norm closure of } \left\{g(\log a)f(b)|g\in C_\oo(\R);f\in C_\oo[0,\oo)\right\}$$
where $C_\oo$ denote functions vanishing at infinity.

We can also introduce an $L^2$ norm on functions of $G_+$ given by
\Eq{\|f(a,b)\|_2=\int_\R\int_{\R+\frac{1}{2}i}|f_1(s)f_2(t)|^2dtds}
according to the Parseval's formula for the Mellin transform.

Due to the appearance of the quantum dilogarithm function $G_b(iz)$ in the expression of the corepresentation in the next section, following the same line above, we define $C_\oo(\cA_q)$ as follows.

\begin{Def} The $C_\oo(\cA_q)$ space is the (operator) norm closure of $\cA^\oo(\cA_q)$ where
\Eq{\cA^\oo(\cA_q):=\mbox{ Linear span of } \left\{\int_\R\int_{\R+i0}f_1(s)f_2(t)A^{ib\inv s}B^{ib\inv t}dsdt\right\}} for
$f_1(s)$ entire analytic in $s$, $f_2(t)$ meromorphic in $t$ with possible simple poles at
$$t=-ibn-i\frac{m}{b},\tab n,m=0,1,2,...$$
and for fixed $v>0$, the function $f_1(s+iv)$ and $f_2(t+iv)$ is of rapid decay.
To define the norm, we realize $A^{ib\inv s}f(x)=e^{2\pi is}f(x)$ and $B^{ib\inv t}f(x)=e^{2\pi ip}f(x)=f(x+1)$ as unitary operators on $L^2(\R)$.
\end{Def}

As discussed in \cite{Ip}, we can also introduce an $L^2$ norm given by
\Eq{\|f(A,B)\|_2=\int_\R\int_{\R+\frac{iQ}{2}} |f_1(s)f_2(t)|^2dtds,}
where $Q=b+b\inv$. However we will focus on the $C^*$ theory in the remaining sections.

\begin{Rem}\label{modulardouble}
The above space $\cA^\oo(\cA_q)$ can be rewritten, according to the Mellin transform, as
$$\cA^\oo(\cA_q):=\mbox{ Linear span of }\left\{ g(\log A)f(B) \right\}$$
where $g(x)$ is entire analytic in $x$ and for every fixed $v$, $g(x+iv)$ is of rapid decay in $x$; $f(y)$ is a smooth
function in $y$ of rapid decay such that it admits a Puiseux series representation
\Eq{f(y)\sim\sum_{n,m=0}^\oo \a_{mn}y^{n+m/b^2}}
 at $y=0$.

Recall that the modular double elements \cite{Fa} are given by non-integral power
$$\til[A]=A^{\frac{1}{b^2}}\tab \til[B]=B^{\frac{1}{b^2}}.$$ Together with the fact that $g(x)$ is entire analytic in $\log A$, it suggests that the space $\cA^\oo(\cA_q)$ actually includes ``$\cA^\oo$ functions" on the space of the modular double $\cA_{q\til[q]}$ as well. See \cite{Ip} for further details.
\end{Rem}


\subsection{Multiplicative Unitary}\label{sec:corep:MU}

Given a $C^*$-algebra $\cA$, we will denote by $$M(\cA)=\{B\in\cB(\cH)|B\cA\sub\cA,\cA B\sub\cA\}$$ the multiplier algebra of $\cA$ viewed as a subset of
$\cB(\cH)$, and we let $\cK(\cH)\sub~\cB(\cH)$ denotes the compact operators acting on $\cH$.

Multiplicative unitaries are fundamental to the theory of quantum groups in the setting of $C^*$-algebras and von Neumann algebras. It is one single map that encodes all structure maps of a quantum group and of its generalized Pontryagin dual simultaneously \cite{Ti}. In particular, we can construct out of the multiplicative unitary a coproduct as well as a corepresentation of the quantum group. Here we recall the basic properties of the multiplicative unitary, and the construction of the multiplicative unitary defined in \cite{WZ} on the $ax+b$ quantum group $\cA$ (see also \cite{PW}).

\begin{Def}
A unitary element $W\in \cA\ox \cA$ is called a multiplicative unitary if it satisfies the pentagon equation
\Eq{\label{pentagon}W_{23}W_{12}=W_{12}W_{13}W_{23}.}

A multiplicative unitary provides us with the coproduct $\D:\cA\to M(\cA\ox \cA)$ given by
\Eq{\label{coproduct}\D(c)=W(c\ox 1)W^*.}
\end{Def}

\begin{Prop}
The pentagon equation \eqref{pentagon} implies the coassociativity of the coproduct defined by \eqref{coproduct}.
\end{Prop}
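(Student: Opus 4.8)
The plan is to evaluate the two iterated coproducts $(\D\ox\mathrm{id})\D$ and $(\mathrm{id}\ox\D)\D$ on an arbitrary $c\in\cA$ and to show that they produce the same element of $M(\cA\ox\cA\ox\cA)$, the coincidence being exactly what the pentagon relation \eqref{pentagon} delivers. First I would record the elementary consequences of the definition \eqref{coproduct}: since $c\mapsto W(c\ox 1)W^*$ is conjugation by a unitary, $\D$ is a $*$-homomorphism, hence extends strictly to multiplier algebras and the slice maps $\D\ox\mathrm{id}$ and $\mathrm{id}\ox\D$ make sense as $*$-homomorphisms on $M(\cA\ox\cA)$. In leg-numbering notation, applying $\D$ to the $i$-th leg of an element amounts to conjugating by $W$ placed in that leg together with a newly inserted one; concretely, for $Z\in M(\cA\ox\cA)$ one has $(\mathrm{id}\ox\D)(Z)=W_{23}Z_{12}W_{23}^*$ and $(\D\ox\mathrm{id})(Z)=W_{12}Z_{13}W_{12}^*$, using that $W$ supported on one pair of legs commutes with any operator supported away from that pair.

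With these conventions, writing $\D(c)=W_{12}(c\ox 1\ox 1)W_{12}^*$ inside $M(\cA\ox\cA\ox\cA)$, I would compute
\Eq{(\mathrm{id}\ox\D)\D(c)&=W_{23}W_{12}(c\ox1\ox1)W_{12}^*W_{23}^*,\\ (\D\ox\mathrm{id})\D(c)&=W_{12}W_{13}(c\ox1\ox1)W_{13}^*W_{12}^*,}
the second equality using $(W_{12}(c\ox1)W_{12}^*)_{13}=W_{13}(c\ox1\ox1)W_{13}^*$. Now I substitute the pentagon identity $W_{23}W_{12}=W_{12}W_{13}W_{23}$ together with its adjoint $W_{12}^*W_{23}^*=W_{23}^*W_{13}^*W_{12}^*$ into the first expression, obtaining $W_{12}W_{13}W_{23}(c\ox1\ox1)W_{23}^*W_{13}^*W_{12}^*$, and then cancel the inner factor using $W_{23}(c\ox1\ox1)W_{23}^*=c\ox1\ox1$, which holds because $c\ox1\ox1$ is supported on the first leg only, on which $W_{23}$ acts trivially. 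The first expression thus becomes $W_{12}W_{13}(c\ox1\ox1)W_{13}^*W_{12}^*$, which is precisely the second. Since the two iterated coproducts agree on $\cA$ and all maps involved are strictly continuous, coassociativity extends to all of $M(\cA)$.

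The main obstacle is not conceptual but careful bookkeeping with leg-numbering on multipliers that are themselves built out of $W$: one must justify the intermediate identities, such as $(\mathrm{id}\ox\D)(W_{12})=W_{23}W_{12}W_{23}^*=W_{12}W_{13}$, before the symbolic manipulation above is legitimate, and one must keep track of the fact that $\D$ takes values in $M(\cA\ox\cA)$ rather than $\cA\ox\cA$, so that the slice maps appearing are the strict extensions and distributing them over products is valid. Once these points are settled the computation is purely formal.
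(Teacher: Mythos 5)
Your argument is correct and is the standard one (conjugation formulas for the slice maps, substitution of the pentagon relation and its adjoint, cancellation of $W_{23}$ against $c\ox1\ox1$); the paper itself states this proposition without proof, as a well-known fact from the theory of multiplicative unitaries. The only caveats are the routine ones you already flag yourself: the slice maps must be understood as strict extensions to $M(\cA\ox\cA)$, which is harmless here since $\D$ is conjugation by a unitary.
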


By representing the first copy of $\cA$ in $W$ as bounded operator on a Hilbert space $\cH$, we obtain a unitary
element $V\in M(\cK(\cH)\ox\cA)$ which represents a (right) corepresentation $\cH\to \cH\ox M(\cA)$. More precisely:

\begin{Prop}
The unitary element $V\in M(\cK(\cH)\ox\cA)$ satisfies
\Eq{(1\ox\D)V=V_{12}V_{13}} or formally
\Eq{(1\ox \D)\circ\Pi=(\Pi\ox 1)\circ \Pi }
where $\D$ is given by \eqref{coproduct} and $\Pi: \cH\to \cH\ox M(\cA)$ is given by
\Eq{\Pi(v):=V(v\ox 1).}
\end{Prop}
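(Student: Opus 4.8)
The plan is to recognise $V$ as the image of the multiplicative unitary $W$ under a representation of $\cA$ on its first leg, and then read off the corepresentation identity $(1\ox\D)V=V_{12}V_{13}$ directly from the pentagon equation \eqref{pentagon}. Throughout I label leg $1$ by the Hilbert space $\cH$ and legs $2,3$ by copies of $\cA$. First I would fix the nondegenerate representation $\pi$ of $\cA$ on $\cH$ (extended to $M(\cA)$) through which the first copy of $\cA$ in $W\in\cA\ox\cA$ is realised, so that $V=(\pi\ox\mathrm{id})(W)$. Under $\pi$ applied to leg $1$ one has $W_{12}\mapsto V_{12}$ and $W_{13}\mapsto V_{13}$, while $W_{23}=1_{\cB(\cH)}\ox W$ is unaffected, its first leg being trivial.

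Next I would rewrite the left-hand side. From the definition \eqref{coproduct} of the coproduct, $\D(c)=W(c\ox1)W^{*}$, one obtains for an elementary tensor $V=\sum_i x_i\ox c_i$ that
\[
(\mathrm{id}\ox\D)(V)=\sum_i x_i\ox W(c_i\ox1)W^{*}=(1\ox W)\Bigl(\sum_i x_i\ox c_i\ox1\Bigr)(1\ox W^{*})=W_{23}\,V_{12}\,W_{23}^{*},
\]
with $W_{23}=1_{\cB(\cH)}\ox W$; extending this from algebraic tensors to the multiplier algebra by strict continuity yields $(1\ox\D)V=W_{23}\,V_{12}\,W_{23}^{*}$. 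Now I would rearrange the pentagon equation \eqref{pentagon}, which in $\cA\ox\cA\ox\cA$ reads $W_{23}W_{12}=W_{12}W_{13}W_{23}$, into the conjugation form $W_{23}\,W_{12}\,W_{23}^{*}=W_{12}W_{13}$ using unitarity of $W$. Applying $\pi$ to the first leg and using the substitutions above turns this into $W_{23}\,V_{12}\,W_{23}^{*}=V_{12}V_{13}$, that is, $(1\ox\D)V=V_{12}V_{13}$. The formal statement $(1\ox\D)\circ\Pi=(\Pi\ox1)\circ\Pi$ for $\Pi(v)=V(v\ox1)$ is then just a transcription of this operator identity.

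The only delicate point is functional-analytic bookkeeping rather than algebra: one must know that $\D$ extends to a nondegenerate $*$-morphism $M(\cA)\to M(\cA\ox\cA)$ which is strictly continuous on bounded sets, that the slice $V_{13}$ and the factor $W_{23}$ make sense as unitary multipliers of $\cK(\cH)\ox\cA\ox\cA$, and that the leg-numbering manipulations together with the passage $(\mathrm{id}\ox\D)(V)=W_{23}V_{12}W_{23}^{*}$ are legitimate in that setting. These are the standard facts in the theory of multiplicative unitaries, so the substantive content of the proof is the one-line deduction from the pentagon equation, and the main work is merely checking that the identity $(1\ox\D)V=V_{12}V_{13}$ is compatible with the concrete realisation of $\cA$, $W$ and $\cH$ entering \eqref{coproduct} and the construction of $V$.
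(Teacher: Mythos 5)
Your argument is correct: it is the standard one-line deduction $(1\ox\D)V=W_{23}V_{12}W_{23}^{*}=V_{12}V_{13}$ from the pentagon equation, which is exactly the general fact the paper invokes here without proof (it states the Proposition as part of the standard theory of multiplicative unitaries). Note that the paper does later verify the coassociativity identity $(1\ox\D)\circ\Pi=(\Pi\ox1)\circ\Pi$ by a direct computation with $G_b$ for the concrete quantum-plane coaction (Corollary \ref{corepdelta}), but for the abstract statement your route is the intended one and the functional-analytic caveats you flag are the right ones.
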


We will now focus on the case where $\cA$ is the quantum plane.
\begin{Prop}\cite{WZ}
Restricting $\cA_q$ to the quantum semigroup generated by positive self-adjoint elements $A,B\in\cA_q$ with $AB=q^2BA$
and coproduct
\Eq{\D(A)=A\ox A,\tab \D(B)=B\ox A+1\ox B} the multiplicative unitary $W$ is given by:
\Eq{\label{WW}W=V_\h(\log(\widehat{B}\ox s q\inv BA\inv))^* e^{\frac{i}{\hbar}\log\widehat{A}\ox\log A\inv}\in C_{\oo}(\cA_q)\ox C_{\oo}(\cA_q)}
where
$q=e^{-i\hbar}, \h=\frac{2\pi}{\hbar}$, the admissible pair $\what[B]=B\inv$ and $\what[A]=q AB\inv$, and $s\in\R_+$
is a constant. Note that in our case $\hbar=2\pi b^2$.

Here the special function $V_\h(z)$ is defined as
\Eq{V_\h(z)=\exp \left\{\frac{1}{2\pi i}\int_0^\oo \log(1+a^{-\h})\frac{da}{a+e^{-z}}\right\}.}

\end{Prop}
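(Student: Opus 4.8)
The claim packages three assertions: $W$ is a unitary in $M\big(C_\oo(\cA_q)\ox C_\oo(\cA_q)\big)$; it satisfies the pentagon equation \eqref{pentagon}; and the induced coproduct $\D(c)=W(c\ox1)W^{*}$ restricts on generators to $\D(A)=A\ox A$ and $\D(B)=B\ox A+1\ox B$. The plan is to check these in turn. The only analytic input is the functional-equation package for Woronowicz's function $V_\h$: being a version of the noncompact quantum dilogarithm, $V_\h$ obeys the quantum exponential and pentagon operator identities, i.e. (after the standard change of variable relating $V_\h$ to $g_b$) the relations \eqref{qexp}--\eqref{qpenta}, so I may quote them directly. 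For unitarity one only needs that $V_\h$ is unimodular on the real axis (its integral representation has purely imaginary exponent there), so that $V_\h(\log Z)^{*}$ is unitary for the positive self-adjoint $Z:=\what[B]\ox sq\inv BA\inv=sq\inv(B\inv\ox BA\inv)$, while $E:=e^{\frac{i}{\hbar}\log\what[A]\ox\log A\inv}$ is the exponential of a skew-adjoint operator; hence $W$ is unitary.

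To read off the coproduct, write $W=V_\h(\log Z)^{*}E$ and conjugate in two stages. The factor $E$ is an ordinary ``$\log\otimes\log$'' Heisenberg exponential; using $AB=q^{2}BA$ one finds that $\log\what[A]=\log(qAB\inv)$ has a constant commutator with $\log A$ and with $\log B$ on the first leg, and with the calibration $\h=2\pi/\hbar$, $\hbar=2\pi b^{2}$ (and the sign conventions of \cite{WZ}) this forces $E(A\ox1)E^{*}=A\ox A$ and $E(B\ox1)E^{*}=B\ox A$, the constant $s$ absorbing a residual scalar. Conjugating further by $V_\h(\log Z)^{*}$: since $A\ox A$ commutes with $Z$ the first relation is unchanged, giving $\D(A)=A\ox A$; and since $Z$ is $q^{2}$-paired with $1\ox B$ (and $B\ox A$ sits in the same quantum pair), the argument $Z$ is exactly the one for which the quantum exponential identity for $V_\h$ yields $V_\h(\log Z)^{*}\,(B\ox A)\,V_\h(\log Z)=B\ox A+1\ox B$, i.e. $\D(B)=B\ox A+1\ox B$. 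This is the computational heart of the statement; the role of $V_\h$ is precisely to produce the ``$+\,1\ox B$'' summand, just as the exponential function produces the additive part of the classical $ax+b$ law.

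For the pentagon equation \eqref{pentagon} I would expand $W_{23}W_{12}$ and $W_{12}W_{13}W_{23}$, move the three scaling factors $E_{12},E_{13},E_{23}$ (which among themselves satisfy a simple pentagon-type identity) to one side, and commute them past the $V_\h$ factors; the leftover relation among the three $V_\h$ factors then has arguments in exactly the pattern of the $V_\h$ pentagon identity \eqref{qpenta}, which is the verification of \cite{WZ}. Finally, $W\in M\big(C_\oo(\cA_q)\ox C_\oo(\cA_q)\big)$ because $V_\h(\log Z)^{*}$ is a bounded (unimodular) function of the positive operator $Z\in\cA_q\ox\cA_q$ whose meromorphic continuation has poles and zeros only on the lattice $\{-ibn-imb\inv\}$ --- $V_\h$ being built from the same data as $G_b$, cf. \eqref{TeschG} --- which is precisely the singular set allowed in the definition of $\cA^\oo(\cA_q)$; the factor $E$ is manifestly a multiplier. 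The step I expect to be the real work is the coproduct computation for $B$: extracting from $AB=q^{2}BA$ the exact commutation relation between $Z$ and $B\ox A+1\ox B$, pinning down $s$ and all powers of $q$ so that it is literally of the form required by the $V_\h$ exponential identity, and justifying these unbounded-operator manipulations on a common core in the sense of \cite{Sch}.
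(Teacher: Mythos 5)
The paper gives no proof of this Proposition: it is imported verbatim from \cite{WZ}, so there is nothing in-text to compare your argument against. Structurally, your outline is the standard (and essentially the cited source's) route: unitarity from the unimodularity of $V_\h$ on the real axis together with functional calculus on the positive operator $Z$; the coproduct by conjugating $c\ox 1$ first past the Weyl-type factor $E$ and then past $V_\h(\log Z)^*$, with the quantum exponential identity supplying the $1\ox B$ summand; and the pentagon reduced to the operator pentagon identity for $V_\h$. Your identification of the quantum exponential relation as the mechanism producing $\D(B)=B\ox A+1\ox B$ is exactly right, and your observation that $A\ox A$ commutes with $Z$ (so that $\D(A)=A\ox A$ survives the second conjugation) checks out.

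Two steps are asserted where they need to be computed, and at least one does not close with the conventions as literally printed. First, $E(A\ox 1)E^*=A\ox A$: with $q=e^{-i\hbar}$, $AB=q^2BA$ and $E=e^{\frac{i}{\hbar}\log\what[A]\ox\log A\inv}$, the BCH step gives $[\log\what[A],\log A]=[\log A,\log B]=2\log q=-2i\hbar$, hence $E(A\ox1)E^*=\exp\bigl(\log A\ox 1-2(1\ox\log A)\bigr)=A\ox A^{-2}$, not $A\ox A$. Either \cite{WZ}'s normalizations differ from those transcribed here (the paper itself uses $q=e^{\pi i b^2}$ elsewhere) or a factor must be tracked more carefully; "the sign conventions of \cite{WZ} force this" is not a substitute for the half-page of bookkeeping. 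Second, the free constant $s$: rescaling the second-leg argument of $V_\h$ by $s$ rescales the output of the quantum-exponential conjugation, so a priori one obtains $\D(B)=B\ox A+s'(1\ox B)$ with $s'$ depending on $s$; "$s$ absorbs a residual scalar" does not establish the coproduct on the nose for every $s\in\R_+$ as the Proposition asserts. If you want a check that stays inside this paper's toolkit, note that Corollary \ref{corepdelta} verifies $(1\ox\D)\circ\Pi=(\Pi\ox1)\circ\Pi$ directly from the explicit coaction formula using the $q$-binomial theorem and the $G_b$ functional equations; running that computation is the cleanest way to confirm that this $W$ really implements the stated $\D$, and it would also pin down the constants you have left implicit.
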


\begin{Rem}
Since we are using the ``transpose" of $\cA$ in \cite{WZ}, our $W$ is related to that in \cite{WZ} by
$$A=a\inv, B=-q ba\inv,$$ i.e. the antipode associated to $\cA$. Furthermore, the choice of $W$ is different from \cite{Ip}, where we used instead the GNS representations to obtain the canonical $W$.
\end{Rem}

\begin{Lem} $V_\h(z)$ and $G_b(z)$ are related by the following formula:
\Eq{V_{1/b^2}(z)=\ze_b G_b(\frac{Q}{2}-\frac{iz}{2\pi b})=\frac{1}{g_b(e^z)},}
and the complex conjugation is given by
\Eq{\label{VG}V_{1/b^2}(z)^*=\frac{\bar{\ze}_b}{G_b(\frac{Q}{2}-\frac{iz}{2\pi b})}=g_b(e^z),}
where we recall $\ze_b=e^{\frac{\pi i}{4}+\frac{\pi i}{12}(b^2+b^{-2})}$.
\end{Lem}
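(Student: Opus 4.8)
The Lemma contains several equivalent formulas, and I would first isolate the one that actually needs an argument. Writing $x:=\frac{Q}{2}-\frac{iz}{2\pi b}$ and noting $\frac{1}{2\pi i b}\log e^{z}=\frac{z}{2\pi i b}=-\frac{iz}{2\pi b}$, the definition of $g_b$ gives $g_b(e^{z})=\bar\ze_b/G_b(x)$; since $\ze_b\bar\ze_b=1$ this yields $\ze_b G_b(x)=1/g_b(e^{z})$ immediately, i.e.\ the second equality in the first display, and the conjugate display will drop out of the first one once complex conjugation is handled (Step~3 below). So the whole statement reduces to
\[
V_{1/b^{2}}(z)=\ze_b\,G_b\!\left(\tfrac{Q}{2}-\tfrac{iz}{2\pi b}\right).
\]
The plan is to prove this first in the region $\Im(b^{2})>0$ --- where $q=e^{\pi i b^{2}}$ and $\widetilde q=e^{-\pi i b^{-2}}$ both have modulus $<1$, where $\Re(1/b^{2})>0$ so the integral defining $V_{1/b^{2}}$ converges, and where Lemma~\ref{infprod} is available --- and for $\Re(z)<0$, and then to analytically continue in $z$ (to the full strip of definition, in particular $z\in\R$) and in $b^{2}$ to the positive real axis, where both sides are holomorphic and the standing assumptions hold. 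Throughout put $\h=1/b^{2}$.

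For the right-hand side I would substitute $x=\frac{Q}{2}-\frac{iz}{2\pi b}$ into the infinite product of Lemma~\ref{infprod}; a short exponent computation gives $e^{2\pi i b(x+nb)}=-q^{2n+1}e^{z}$ and $e^{2\pi i b^{-1}(x-nb^{-1})}=-\widetilde q^{\,2n-1}e^{\h z}$, hence
\[
\ze_b\,G_b\!\left(\tfrac{Q}{2}-\tfrac{iz}{2\pi b}\right)=\frac{\prod_{k\ge0}(1+\widetilde q^{\,2k+1}e^{\h z})}{\prod_{k\ge0}(1+q^{2k+1}e^{z})}.
\]
Taking logarithms, expanding each $\log(1+X)=\sum_{m\ge1}\frac{(-1)^{m+1}}{m}X^{m}$, summing the resulting geometric series over $k$ (so $\sum_{k\ge0}q^{(2k+1)m}=q^{m}/(1-q^{2m})$, and similarly for $\widetilde q$), and rewriting $q^{m}/(1-q^{2m})$ and $\widetilde q^{\,m}/(1-\widetilde q^{\,2m})$ in terms of $\sin(\pi b^{2}m)$ and $\sin(\pi b^{-2}m)$, I would arrive at
\[
\log\!\left[\ze_b\,G_b\!\left(\tfrac{Q}{2}-\tfrac{iz}{2\pi b}\right)\right]=\frac{i}{2}\sum_{m\ge1}\frac{(-1)^{m}}{m}\left(\frac{e^{mz}}{\sin(\pi b^{2}m)}+\frac{e^{m\h z}}{\sin(\pi b^{-2}m)}\right),
\]
absolutely convergent for $\Re z<0$ because $|q|,|\widetilde q|<1$.

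For the left-hand side I would treat the Woronowicz integral by Mellin--Barnes: one computes $\int_{0}^{\oo}\log(1+a^{-\h})a^{s-1}\,da=\frac{\pi}{s\sin(\pi s/\h)}$ on the strip $0<\Re s<\Re\h$, inserts the inversion formula for $\log(1+a^{-\h})$ into $V_{1/b^{2}}(z)$, interchanges the two integrations, and performs the $a$-integral via $\int_{0}^{\oo}\frac{a^{-s}}{a+e^{-z}}\,da=\frac{\pi e^{sz}}{\sin\pi s}$ (valid for $0<\Re s<1$). Choosing the vertical contour $\Re s=c$ with $0<c<\min(1,\Re\h)$ this gives
\[
\log V_{1/b^{2}}(z)=-\tfrac14\int_{c-i\oo}^{c+i\oo}\frac{e^{sz}\,ds}{s\,\sin(\pi s/\h)\,\sin\pi s}.
\]
For $\Re z<0$ the integrand decays in the right half-plane, so I would close to the right and sum the residues at the simple poles $s=m$ (zeros of $\sin\pi s$) and $s=m\h$ (zeros of $\sin(\pi s/\h)$), $m\ge1$, these two families being disjoint since $q$ is not a root of unity, i.e.\ $\h$ is irrational. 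As $\pi m/\h=\pi b^{2}m$ and $\pi m\h=\pi b^{-2}m$, the residue sum is exactly the series displayed above, which proves the identity for $\Re z<0$, $\Im b^{2}>0$; analytic continuation in $z$ and $b^{2}$ finishes it under the standing assumptions. For the conjugate version: for $z\in\R$ the integral in $V_{1/b^{2}}(z)$ is real, so $V_{1/b^{2}}(z)$ is unimodular and $V_{1/b^{2}}(z)^{*}=\overline{V_{1/b^{2}}(z)}=\bar\ze_b\,\overline{G_b(\tfrac{Q}{2}-\tfrac{iz}{2\pi b})}$; since $\tfrac{Q}{2}-\tfrac{iz}{2\pi b}$ lies on the line $\Re(\cdot)=\tfrac{Q}{2}$, the complex-conjugation property of $G_b$ gives $\overline{G_b(\tfrac{Q}{2}-\tfrac{iz}{2\pi b})}=1/G_b(\tfrac{Q}{2}-\tfrac{iz}{2\pi b})$, whence $V_{1/b^{2}}(z)^{*}=\bar\ze_b/G_b(\tfrac{Q}{2}-\tfrac{iz}{2\pi b})=g_b(e^{z})$, and this extends to self-adjoint arguments by functional calculus.

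The main obstacle is the contour closing in the left-hand side step: beyond the (routine) interchange of integrations on the common strip $0<\Re s<\min(1,\Re\h)$, one must show the arc at infinity in the right half-plane contributes nothing, which forces the closing contour to be routed between the two interleaved and irregularly spaced pole families $\{m\}$ and $\{m\h\}$, using the decay $|e^{sz}|=e^{\Re s\,\Re z}$ with $\Re z<0$ to dominate the growth of the two sine factors. A workable alternative, avoiding the product entirely, is to use the Ruijsenaars representation~\eqref{RuiG} together with~\eqref{TeschG} to write an explicit integral formula for $\log G_b(\tfrac{Q}{2}-\tfrac{iz}{2\pi b})$, and to check that it and $\log V_{1/b^{2}}(z)$ satisfy the same first-order difference equation in $z$ with matching asymptotics as $\Re z\to\pm\infty$, forcing equality up to a constant which is then pinned down by a single evaluation.
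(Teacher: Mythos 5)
Your argument is correct, but it takes a genuinely different route from the paper. The paper's proof is essentially a citation: it quotes Ruijsenaars's identity (A.18) of \cite{Ru}, $V_\h(z)=G(2\pi,2\pi/\h;z)\exp(-i\h z^2/8\pi-\frac{\pi i}{24}(\h+\h\inv))$, and then combines the homogeneity $G(a_+,a_-;z)=G(1,a_+/a_-;z/a_-)$ with the dictionary \eqref{TeschG} to land on $\ze_b G_b(\frac{Q}{2}-\frac{iz}{2\pi b})$; the conjugation formula then follows as in your final step. Your closing ``alternative'' is close in spirit to this, but your main line --- matching the Shintani product of Lemma \ref{infprod} for $\Im(b^2)>0$ against a Mellin--Barnes evaluation of the Woronowicz integral, both collapsing to the series $\frac{i}{2}\sum_{m\ge1}\frac{(-1)^m}{m}\bigl(e^{mz}/\sin(\pi b^2m)+e^{mz/b^2}/\sin(\pi b^{-2}m)\bigr)$ --- is a self-contained derivation that the paper does not attempt. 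I checked the pieces: the exponent computation $e^{2\pi ib(x+nb)}=-q^{2n+1}e^{z}$ and its $b\inv$ counterpart, the Mellin pair $\int_0^\oo\log(1+a^{-\h})a^{s-1}da=\pi/(s\sin(\pi s/\h))$, the beta integral giving $\pi e^{sz}/\sin\pi s$, and the residues at $s=m$ and $s=m\h$ all come out as you state, and the reduction of the whole lemma to the single identity via $\ze_b\bar\ze_b=1$ together with the unimodularity of $V_{1/b^2}$ on $\R$ is exactly right. What your approach buys is independence from the external reference and a visible reason why the two families of zeros and poles of $G_b$ correspond to the two sine factors in Woronowicz's integrand; what it costs is the contour-closing estimate you yourself flag --- though note that for $\Im(b^2)>0$ the poles $s=m\h$ leave the positive real axis along a ray into the lower half-plane, so the two pole families separate for large $m$ and the estimate is actually easier than in the real-$b$ situation you seem to be guarding against. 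Either way the argument goes through.
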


\begin{proof}
In order to rewrite $V_\h(z)$ in terms of $G_b(z)$, we pass to
Ruijsenaars's more general hyperbolic gamma function \eqref{RuiG}.
From \cite[(A.18)]{Ru}, we have
$$V_\h(z)=G(2\pi,2\pi/\h;z)\exp(-i\h z^2/8\pi -\frac{\pi i}{24}(\h+\frac{1}{\h}))$$
with $\h=\frac{2\pi}{\hbar}=\frac{1}{b^2}$.

Also using $$G(a_+;a_-;z)=G(1,\frac{a_+}{a_-};\frac{z}{a_-})$$ and
\eqref{TeschG}:
$$G(b,b\inv,z)=e^{\pi i z^2/2}e^{\pi i Q^2/8}G_b(\frac{Q}{2}-iz)$$
we obtain
$$V_{1/b^2}(z)=\ze_b G_b(\frac{Q}{2}-\frac{iz}{2\pi b})$$
and the complex conjugation
$$V_{1/b^2}(z)^*=\frac{\bar{\ze}_b}{G_b(\frac{Q}{2}-\frac{iz}{2\pi b})}.$$
\end{proof}

\subsection{Corepresentation of $C_\oo(\cA_q)$}\label{sec:corep:corep}
We can now define the coaction of the quantum space $C_\oo(\cA_q)$:
\begin{Thm}
For the choice $s=2\sin \pi b^2\in\R_+$, the multiplicative unitary $W$ defined in \eqref{WW} induces a unitary (right) coaction of the quantum space $C_\oo(\cA_q)$ on
$\cH=L^2(\R)$ by
$$\Pi: \cH\to \cH\ox M(C_\oo(\cA_q))$$
\Eq{\label{coaction1}
f(t)&\mapsto& F(x):=\int_{\R+i0}f(t)e^{\pi Q(t-x)}\frac{G_b(ix-it)}{(2\sin\pi b^2)^{ib\inv (x-t)}}A^{ib\inv x}B^{ib\inv (t-x)}dt, }
where $f(z)\in\cW$, and extends to $\cH$ by density.
\end{Thm}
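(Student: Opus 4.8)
The plan is to verify directly that the map $\Pi$ defined by \eqref{coaction1} is (i) isometric on the dense domain $\cW$, hence extends to a unitary onto its image, and (ii) satisfies the corepresentation identity $(1\ox\D)\circ\Pi=(\Pi\ox 1)\circ\Pi$, which by the general theory of multiplicative unitaries recalled above is equivalent to saying that $\Pi$ comes from the element $V\in M(\cK(\cH)\ox C_\oo(\cA_q))$ attached to $W$. Concretely, I would first unwind $W$ from \eqref{WW}: write $\widehat A=qAB\inv$, $\widehat B=B\inv$, use the Lemma identifying $V_\h(z)^*=g_b(e^z)=\bar\ze_b/G_b(\tfrac Q2-\tfrac{iz}{2\pi b})$, and represent the \emph{first} leg on $\cH=L^2(\R)$ via $\widehat A\mapsto e^{2\pi i s}$ (i.e. $\log\widehat A\mapsto 2\pi i s$, acting as $e^{2\pi p}$ in the $x$-variable) and $\widehat B\mapsto$ multiplication-type operator, so that $V=(\text{ev}\ox\mathrm{id})W$ becomes an explicit operator-valued kernel. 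The substitution $s=2\sin\pi b^2$ is chosen precisely so the constant in front of the $G_b$ collapses correctly; I would track that constant through the computation rather than guessing it.

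The main computation is the identity $\Pi(v)=V(v\ox 1)$. I would start from $V$ written in terms of $g_b$ of a product of the two commuting-up-to-$q$ operators $\widehat B\ox sq\inv BA\inv$ and the phase $e^{\frac i\hbar\log\widehat A\ox\log A\inv}$, then diagonalize: on the first leg pass to the spectral variable of $\widehat A$ (a Mellin/Fourier variable $t$ dual to the $L^2(\R)$ variable, exactly the setting of Proposition~\ref{MTprop}(d) where $\cW$ is a core for $e^{\a x}$ and $e^{\b p}$), and use the Fourier transform formulas of Lemma~\ref{FT} to turn $g_b$ of the operator into an integral kernel involving $e^{\pi Q(t-x)}G_b(ix-it)$ and the power $(2\sin\pi b^2)^{ib\inv(x-t)}$, while the phase $e^{\frac i\hbar\log\widehat A\ox\log A\inv}$ produces the $A^{ib\inv x}$ factor and shifts the $B$-exponent to $B^{ib\inv(t-x)}$. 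The contour $\R+i0$ in \eqref{coaction1} is forced by the pole of $G_b(iz)$ at $z=0$ (analyticity property of $G_b$) together with the requirement that the result land in $C_\oo(\cA_q)$, whose defining space $\cA^\oo(\cA_q)$ demands precisely simple poles at $t-x\in -ibn-i\tfrac mb$: these are supplied by the poles of $G_b(ix-it)$, so one checks the output is genuinely in $M(C_\oo(\cA_q))$.

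For unitarity, rather than computing $\|\Pi(v)\|$ on $\cH\ox M(C_\oo(\cA_q))$ directly, I would either invoke that $V$ is unitary because $W$ is a multiplicative unitary and the first-leg representation is the standard one (so $V\in M(\cK(\cH)\ox C_\oo(\cA_q))$ is automatically a unitary corepresentation), or, for a self-contained check, use the $L^2$-norm on $C_\oo(\cA_q)$ given by $\int_\R\int_{\R+iQ/2}|f_1(s)f_2(t)|^2\,dt\,ds$ together with the complex-conjugation property $\overline{G_b(x)}=1/G_b(Q-\bar x)$ and the reflection formula $G_b(x)G_b(Q-x)=e^{\pi i x(x-Q)}$ to see that the kernel times its conjugate integrates to a delta function. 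The coassociativity-type identity $(1\ox\D)V=V_{12}V_{13}$ follows from the pentagon equation \eqref{pentagon} for $W$ via the Proposition quoted before the theorem, so nothing new is needed there.

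The hard part will be the bookkeeping in the second step: keeping the contour shifts legitimate (each application of Lemma~\ref{FT} or Lemma~\ref{tau} requires the relevant $\Im$-strip condition, and one must check that the domain $\cW$ with its rapid-decay property from Proposition~\ref{MTprop}(a) makes every interchange of integration absolutely convergent), and correctly matching \emph{all} the constants and Gaussian phases—$\ze_b$ versus $\bar\ze_b$, the $e^{\pi Q(t-x)}$ prefactor, and the precise branch of $(2\sin\pi b^2)^{ib\inv(x-t)}$—so that the choice $s=2\sin\pi b^2$ really produces \eqref{coaction1} on the nose. A secondary subtlety is verifying that the output kernel has exactly the analytic structure (entire in the $A$-variable, simple poles in the $B$-variable at the modular-double lattice) required to conclude $\Pi(\cW)\subset\cH\ox M(C_\oo(\cA_q))$, which is where Remark~\ref{modulardouble} and the pole structure of $G_b$ enter.
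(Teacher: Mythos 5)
Your plan follows the paper's proof essentially step for step: the paper likewise reinterprets $W$ as $V\in M(\cK(\cH)\ox C_\oo(\cA_q))$ by acting with the admissible pair $\widehat A=qAB\inv$, $\widehat B=B\inv$ on the first leg, expands $V_{1/b^2}(\cdot)^*$ via Lemma~\ref{FT} into an integral kernel built from $e^{\pi Qt}G_b(-it)$, lets the Gaussian factor $e^{\frac{i}{2\pi b^2}\log\widehat A\ox\log A\inv}$ produce $A^{ib\inv x}$, and fixes $s=2\sin\pi b^2$ so the Gaussian phases cancel, with unitarity and the corepresentation identity coming from the general multiplicative-unitary theory (supplemented by the direct check in Corollary~\ref{corepdelta}). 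The one device you leave implicit is how to ``pass to the spectral variable of $\widehat A$'': the paper implements this concretely with Kashaev's order-three transform $\widetilde{\mathbf{A}}$, which conjugates $\widehat A$ to the multiplication operator $e^{-2\pi bx}$ and $\widehat B$ to $qe^{-2\pi bx}e^{2\pi bp}$, after which your computation goes through as described.
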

\begin{Rem} The choice of $s$ is made so that we will obtain classical limit from $G_b$, as well as the necessary pairing in order to get the representation of $\cB_q$ in the next subsection.
\end{Rem}
\begin{proof}

The element $W$ can be reinterpreted as an element $$V\in M(\cK(\cH)\ox C_\oo(\cA_q))$$ by letting $\what[A],\what[B]$
act on $\cH=L^2(\R)$, hence giving rise to a corepresentation of $C_\oo(\cA_q)$. We start with $A=e^{2\pi bx},
B=e^{2\pi bp}$, so that the action is given by
\begin{eqnarray}
\what[A]&=& qAB\inv=qe^{2\pi bx}e^{-2\pi b p}=e^{2\pi b(x-p)},\\
\what[B]&=&B\inv = e^{-2\pi b p}.
\end{eqnarray}

However the action is nontrivial in the factor $$e^{\frac{i}{2\pi b^2}\log\what[A]\ox \log A}.$$

Hence we introduce a change of variables (of order 3) on $L^2(\R)$ given by Kashaev \cite{Ka2, FK}:

\Eq{\widetilde{\mathbf{A}}:=f(\a)\mapsto F(\b)=\int_\R e^{2\pi i \a\b}e^{\pi i \b^2-\pi i/12}f(\a)d\a} such that
$$\til[\mathbf{A}]\inv x\til[\mathbf{A}]=-p$$
$$\til[\mathbf{A}]\inv p\til[\mathbf{A}]=x-p.$$

Then the operator $\what[A]$ and $\what[B]$ becomes:
\begin{eqnarray}
\til[\mathbf{A}]\inv\what[A]\til[\mathbf{A}]&=&e^{-2\pi b x}\\
\til[\mathbf{A}]\inv\what[B]\til[\mathbf{A}]&=&e^{2\pi b(-x+p)}=qe^{-2\pi bx}e^{2\pi bp}
\end{eqnarray}

Hence given a function $f(x)\in L^2(\R)$, we have
\begin{eqnarray*}
e^{\frac{i}{2\pi b^2} \log \what[A]\ox \log A\inv}f(x)&=&e^{\frac{i}{2\pi b^2}(-2\pi b x) \log A\inv}f(x)\\
&=&f(x)A^{ib\inv x}
\end{eqnarray*}

Next we deal with the quantum dilogarithm function $V_\h(z)$.
From the Fourier transform formula (Lemma \ref{FT}), we found from \eqref{VG}
\Eq{V_{1/b^2}(z)^*=\int_{\R+i0}e^{ib\inv tz}e^{\pi Qt}G_b(-it)dt.}

Hence the operator $W$ acts as
\begin{eqnarray*}
(Wf)(x)&=&V_{1/b^2}(\log(\what[B]\ox q\inv s BA\inv))^* (f(x)A^{ib\inv x})\\
&=&\left(\int_{\R+i0}(\what[B]\ox (q\inv sBA\inv))^{ib\inv t}e^{\pi Q t}G_b(-it)dt \right)(f(x)A^{ib\inv x})\\
&=&\left(\int_{\R+i0}(\what[B]^{ib\inv t}\ox (q\inv sBA\inv)^{ib\inv t})e^{\pi Q t}G_b(-it)dt \right) (f(x)A^{ib\inv x}).
\end{eqnarray*}
Now $\what[B]$ formally acts as $qe^{-2\pi b x}f(x-ib)$, and by induction
$$\what[B]^n f(x)=q^{n^2}e^{-2\pi b nx}f(x-ibn).$$
Hence using functional calculus, $\what[B]^{ib\inv t}$ acts (as a unitary operator) by
$$\what[B]^{ib\inv t}\cdot f(x)=q^{-b^{-2}t^2}e^{-2\pi i t x}f(x+t)=e^{-\pi it^2-2\pi i t x}f(x+t).$$
Next $(sq\inv BA\inv)^{ib\inv t}$ can be split using the relation
$$(BA\inv)^n=q^{-n(n-1)}B^nA^{-n},$$ we have
$$(sq\inv BA\inv)^{ib\inv t}=s^{ib\inv t}q^{-ib\inv t}q^{b^{-2}t^2+ib\inv t}B^{ib\inv t}A^{-ib\inv t}=s^{ib\inv t}e^{\pi it^2}B^{ib\inv t}A^{-ib\inv t}.$$
Combining, we obtain
\begin{eqnarray*}
&&\int_{\R+i0}e^{-\pi it^2-2\pi i t x}e^{\pi Q t}q^{-2tx}G_b(-it) s^{ib\inv t}e^{\pi it^2}B^{ib\inv t}A^{-ib\inv t}A^{ib\inv (x+t)}f(x+t)dt\\
&=&\int_{\R+i0}e^{\pi Q t}e^{-2\pi itx}G_b(-it)s^{ib\inv t}B^{ib\inv t}f(x+t)A^{ib\inv x} dt\\
&=&\int_{\R+i0}f(x+t)e^{\pi Q t}G_b(-it)s^{ib\inv t}A^{ib\inv x}B^{ib\inv t} dt\\
&=&\int_{\R+i0}f(t)e^{\pi Q(t-x)}G_b(ix-it)s^{ib\inv (t-x)}A^{ib\inv x}B^{ib\inv (t-x)}dt\\
\end{eqnarray*}
Now by setting
$$s=2\sin\pi b^2=i(q\inv-q) \in\R_{>0}$$
we obtain
$$=\int_{\R+i0}f(t)e^{\pi Q(t-x)}\frac{G_b(ix-it)}{(2\sin\pi b^2)^{ib\inv (x-t)}}A^{ib\inv x}B^{ib\inv (t-x)}dt$$
as desired. We see that the integrand is bounded by the asymptotic properties of $G_b(ix)$.
\end{proof}

Starting from the coaction formula, we can also see that it is a corepresentation by manipulating the functional properties of the special function $G_b(x)$ directly:
\begin{Cor}\label{corepdelta} The coaction satisfies
$$(1\ox \D)\circ\Pi=(\Pi\ox 1)\circ \Pi$$
as a map from $\cH$ to $\cH\ox M(C_\oo(\cA_q)\ox C_\oo(\cA_q))$, where we recall that $\D$ is the coproduct of $\cA_q$
given by \Eqn[
\D(A)&=&A\ox A,\\
\D(B)&=&B\ox A+1\ox B ] and extend to $C_\oo(\cA_q)$ by
$$\D\left(\int_\R\int_{\R+i0}F(s,t)A^{is}B^{it}dsdt\right):=\int_\R\int_{\R+i0}F(s,t)\D(A^{is}B^{it})dsdt.$$
\end{Cor}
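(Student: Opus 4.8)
The plan is to verify the corepresentation identity $(1\ox\D)\circ\Pi=(\Pi\ox 1)\circ\Pi$ directly on a dense set of vectors $f\in\cW$, by computing both sides as explicit integral kernels built out of $G_b$ and reducing the resulting identity to the $q$-binomial theorem (Lemma \ref{qbi}) together with the reflection and functional properties of $G_b$. On the left-hand side I would apply $\Pi$ once to get $F(x)=\int_{\R+i0} f(t)\,e^{\pi Q(t-x)}\frac{G_b(ix-it)}{(2\sin\pi b^2)^{ib\inv(x-t)}}A^{ib\inv x}B^{ib\inv(t-x)}\,dt$, and then apply $1\ox\D$, which by the extension rule in the statement amounts to replacing $A^{ib\inv x}B^{ib\inv(t-x)}$ by $\D(A)^{ib\inv x}\D(B)^{ib\inv(t-x)}=(A\ox A)^{ib\inv x}(B\ox A+1\ox B)^{ib\inv(t-x)}$. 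The crucial step is to expand $(B\ox A+1\ox B)^{ib\inv(t-x)}$ using the $q$-binomial theorem: since $u=1\ox B$ and $v=B\ox A$ are positive self-adjoint with $uv=q^{\pm2}vu$ (one checks the sign from $AB=q^2BA$), Lemma \ref{qbi} writes this power as a contour integral $\int_C d\t\,\binom{t'}{\t}_b^{\sharp}\,u^{i(\cdots)}v^{i\cdots}$ with $t'=b\inv(t-x)/i$-type argument, introducing a second spectral variable.

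On the right-hand side I would apply $\Pi$ twice: first $f\mapsto F_1$ with integration variable $t_1$, then $F_1\mapsto F_2$ with variable $t_2$, producing a double integral over $t_1,t_2$ whose kernel is a product of two copies of $\frac{G_b(\cdots)}{(2\sin\pi b^2)^{\cdots}}e^{\pi Q(\cdots)}$ times $A^{ib\inv x_2}B^{ib\inv(\cdots)}\ox A^{ib\inv x_1}B^{ib\inv(\cdots)}$ after keeping careful track of which leg each operator lands in. The goal is then to match the $t_1,t_2$ double integral on the right with the $(t,\t)$ double integral on the left after the substitution $t_1=\t$, $t_2=t-\t$ (or the reverse), so that the monomials in $A,B$ in each tensor leg agree termwise; this forces an identity between the product of two $G_b$-factors and the $q$-binomial coefficient $\binom{\cdot}{\cdot}_b$ times the remaining single $G_b$-factor, which is exactly the content of the second formula for $\binom{t}{\t}_b$ in Lemma \ref{qbi}, namely $\binom{t}{\t}_b=\frac{G_b(-ib\t)G_b(ib\t-ibt)}{G_b(-ibt)}$, once all arguments are rescaled by $b$ and the prefactors $e^{\pi Q(\cdots)}$ and $(2\sin\pi b^2)^{\cdots}$ are absorbed using $2\sin\pi b^2=i(q\inv-q)$ and the functional equation $G_b(x+b)=(1-e^{2\pi ibx})G_b(x)$. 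The reflection property $G_b(x)G_b(Q-x)=e^{\pi ix(x-Q)}$ and the exact $\frac{1}{2\pi}$-normalization of the residue of $G_b$ at $0$ are what make the Gaussian/phase factors cancel.

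The main obstacle I anticipate is bookkeeping rather than conceptual: one must track the noncommutativity of $A$ and $B$ across the tensor factors scrupulously (using $A^{is}B^{it}=q^{?}B^{it}A^{is}$ and $(BA\inv)^n=q^{-n(n-1)}B^nA^{-n}$ as in the proof of the previous theorem), ensure all contour prescriptions $\R+i0$ are compatible so that Fubini applies when interchanging the $t_1,t_2$ and $t,\t$ integrations, and verify that the contour $C$ in the $q$-binomial theorem can be deformed to the one produced naturally by $\Pi\circ\Pi$. A secondary subtlety is that $\D(B)^{ib\inv(t-x)}$ is a power of an \emph{unbounded} sum of operators, so strictly the identity should first be established for $t-x$ in the range where the $q$-binomial integral converges absolutely (e.g. $\Im$ of the exponent in a suitable strip, as in Lemma \ref{qbi}) and then extended by analytic continuation in $t$, exactly as contours were shifted in the proof of Theorem \ref{interlimit}. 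Once the kernels are shown to coincide on $\cW$, density and unitarity of $\Pi$ (already established) extend the identity to all of $\cH$.
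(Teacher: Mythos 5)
Your proposal follows essentially the same route as the paper: expand $\D(B)^{ib\inv(t-x)}$ via the $q$-binomial theorem in its $G_b$-ratio form $\veca{t\\\t}_b=\frac{G_b(-ib\t)G_b(ib\t-ibt)}{G_b(-ibt)}$, so that the denominator cancels the single $G_b$ from the first application of $\Pi$, then match the resulting $(z,\t)$ double integral with the iterated coaction $(\Pi\ox1)\circ\Pi$ (whose prefactors $e^{\pi Q(\cdot)}$ and $(2\sin\pi b^2)^{\cdot}$ telescope) after a change of variables and an interchange of integration order with the contour tracking you describe. The only cosmetic difference is that the reflection property and functional equation you list as possibly needed turn out not to be required — the $G_b$ factors and phases match directly.
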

\begin{proof}
We check the corepresentation axioms formally.

First note that since $A,B$ are positive self-adjoint, the coproduct $\D(A)$ and $\D(B)$ is essentially self-adjoint,
hence it is well-defined. (We don't run into the problem of choosing self-adjoint extension as in \cite{WZ} since our
$B$ is positive.)

For notational convenience, without loss of generality we scale $b\inv x$ and $b\inv z$ to $x$ and $z$ respectively.
We need to calculate the coproduct $\D(A^{ix}B^{iz-ix})$:
\begin{eqnarray*}
\D(A^{ix}B^{iz-ix})&=&\D(A)^{ix}\D(B)^{iz-ix}\\
&=&(A\ox A)^{ix}(B\ox A+1\ox B)^{iz-ix}\\
&=&(A^{ix}\ox A^{ix}) B\int_\R d\t \veca{z-x\\ \t}_b (B\ox A)^{iz-ix-i\t}(1\ox B)^{i\t}\\
&=&b\int_{C} d\t \frac{G_b(ib\t-ibz+ibx)G_b(-ib\t)}{G_b(ibx-ibz)}(A^{ix}B^{iz-ix-i\t})\ox(A^{iz-i\t}B^{i\t})\\
&=&b\int_{C} d\t \frac{G_b(ib\t+ibx)G_b(-ibz-ib\t)}{G_b(ibx-ibz)}(A^{ix}B^{-ix-i\t})\ox(A^{-i\t}B^{i\t+iz}),\\
\end{eqnarray*}
where the contour $C$, as before, goes above the poles at $\t=-z$ and below the poles at $\t=-x$.
Hence we have

\begin{eqnarray*}
(1\ox\D)\circ\Pi f(x)&=&b^2\int_{\R+i0}\int_{C} f(z)\frac{G_b(ibx-ibz)e^{\pi Qb(z-x)}}{(2\sin\pi b^2)^{ix-iz}}\cdot\\
&&\frac{G_b(ib\t+ibx)G_b(-ibz-ib\t)}{G_b(ibx-ibz)}(A^{ix}B^{-ix-i\t})\ox(A^{-i\t}B^{i\t+iz}) d\t dz\\
&=&b^2\int_{\R+i0}\int_{C}  \frac{f(z)e^{\pi Qb(z-x)}}{(2\sin\pi b^2)^{ix-iz}}G_b(ib\t+ibx)G_b(-ibz-ib\t)\cdot\\
&&(A^{ix}B^{-ix-i\t})\ox(A^{-i\t}B^{i\t+iz}) d\t dz\\
&=&b^2\int_{\R-i0}\int_{\R+i0}  \frac{f(z)e^{\pi Qb(z-x)}}{(2\sin\pi b^2)^{ix-iz}}G_b(ib\t+ibx)G_b(-ibz-ib\t)\cdot\\
&&(A^{ix}B^{-ix-i\t})\ox(A^{-i\t}B^{i\t+iz}) dz d\t\\
&=&b^2\int_{\R+i0}\int_{\R+i0} \frac{f(z)e^{\pi Qb(z-x)}}{(2\sin\pi b^2)^{ix-iz}}G_b(ibx-ibw)G_b(ibw-ibz)\cdot\\
&& (A^{ix}B^{iw-ix})\ox (A^{iw}B^{iz-iw})dzdw,
\end{eqnarray*}
where in the change of order of integration, the contour is such that $\Im(z)>~\Im(\t)$ and $\Im(\t)<\Im(x)=0$, hence
the contour of $\t$ after interchanging is shifted to $\R-i0$. The decay properties of $G_b$ on $\t$ gaurantee the
change of order of integration.

Finally we have
\begin{eqnarray*}
(\Pi\ox 1)\circ \Pi f(x)&=&b^2\int_{\R+i0}\int_{\R+i0}  f(z)\frac{G_b(ibx-ibw)e^{\pi Qb(w-x)}}{(2\sin\pi b^2)^{ix-iw}}\cdot\\
&&\frac{G_b(ibw-ibz)e^{\pi Qb(z-w)}}{(2\sin\pi b^2)^{iw-iz}} (A^{ix}B^{iw-ix})\ox (A^{iw}B^{iz-iw})dzdw\\
&=&b^2\int_{\R+i0}\int_{\R+i0} \frac{f(z)e^{\pi Qb(z-x)}}{(2\sin\pi b^2)^{ix-iz}}G_b(ibx-ibw)G_b(ibw-ibz)\cdot\\
&& (A^{ix}B^{iw-ix})\ox (A^{iw}B^{iz-iw})dzdw\\
&=&(1\ox \D)\circ\Pi f(x).
\end{eqnarray*}
\end{proof}

After rewriting the coaction explicitly, the relationship between the quantum corepresentation and the classical
$ax+b$ group representation becomes clear:
\begin{Thm} \label{classicalcorep} Under the scaling by $x\to bx$, the limit of the coaction \eqref{coaction1} is precisely the representation $R_{+}$ of the $ax+b$ group.
Similarly, the coaction corresponding to $V^*$ is $R_-$.
\end{Thm}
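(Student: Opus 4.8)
The plan is to take the classical limit of the explicit coaction \eqref{coaction1} directly: rescale the variables, use Theorem~\ref{limit} to turn $G_b$ into the ordinary $\G$-function, and then recognise the limiting integral kernel as the one describing $R_+$ in the Mellin-transform realisation of Section~\ref{sec:ax+b:rep}.

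First I would rescale all variables by $b$, in the spirit of the proof of Theorem~\ref{interlimit}; this is forced, since $e^{\pi Q(t-x)}$ with $Q=b+b\inv$ would otherwise diverge. After $x\mapsto bx$, $t\mapsto bt$ the coaction has kernel
$$b\,e^{\pi Q b(t-x)}\,\frac{G_b(ib(x-t))}{(2\sin\pi b^2)^{i(x-t)}}\,A^{ix}B^{i(t-x)},$$
the extra $b$ being the Jacobian of the single $t$-integration. Then I read off the asymptotics as $b^2=ir\to i0^+$: Theorem~\ref{limit} with argument $i(x-t)$ gives $G_b(ib(x-t))\sim(2\pi b)\inv(-2\pi i b^2)^{i(x-t)}\G(i(x-t))$; since $2\sin\pi b^2\sim2\pi b^2$, the combination $\bigl(-2\pi i b^2/(2\sin\pi b^2)\bigr)^{i(x-t)}\to(-i)^{i(x-t)}$ (with compatible branches, as in the reflection-formula check following Theorem~\ref{limit}); and $Qb=1+b^2\to1$, so $e^{\pi Q b(t-x)}\to e^{\pi(t-x)}$. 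Collecting these, the kernel tends to
$$\frac{1}{2\pi}\,e^{\pi(t-x)}(-i)^{i(x-t)}\,\G(i(x-t))\,a^{ix}b^{i(t-x)},$$
where $a^{ix},b^{i(t-x)}$ are the images of $A^{ix},B^{i(t-x)}$ under the identification of $C_\oo(\cA_q)$ with $C_\oo(G)|_{G_+}$ of Section~\ref{sec:corep:cont}. The branch identity $e^{\pi(t-x)}(-i)^{i(x-t)}=(-i)^{i(t-x)}$ rewrites this as $\tfrac{1}{2\pi}\G(ix-it)\,a^{ix}(-ib)^{i(t-x)}$, which is exactly the Mellin kernel $K(w,z;g)=\tfrac{1}{2\pi}\G(iw-iz)\,a^{iw}(\l b)^{iz-iw}$ of Section~\ref{sec:ax+b:rep} at $w=x$, $z=t$, $\l=-i$, i.e. the kernel of $R_+=R_{-i}$ in the transposed realisation matching the coproduct of $\cA_q$.

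To make this rigorous I would justify interchanging $\lim_{r\to0}$ with the $t$-integral by dominated convergence: the convergence in Theorem~\ref{limit} is uniform on compacta, and on the tails of the contour $\R+i0$ a $b$-independent majorant comes from the asymptotics of $G_b$ (Proposition~\ref{asymp}) together with the rapid decay of $f\in\cW$ (Proposition~\ref{MTprop}); the same estimates keep the contour shift used to derive \eqref{coaction1} valid along the limit. The coaction attached to $V^*$ is obtained by replacing $V_\h(\cdot)^*$ by $V_\h(\cdot)$ in $W$, which by \eqref{VG} amounts to conjugating the $G_b$-factor; the identical limit then yields the complex-conjugate kernel $\overline{K(w,z;g)}$, which is the kernel of $R_-=R_i$.

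The part I expect to be genuinely delicate, rather than merely lengthy, is the bookkeeping of branch cuts and of the compensating powers of $b$: each of $(-2\pi i b^2)^{i(x-t)}$, $(2\sin\pi b^2)^{-i(x-t)}$ and $e^{\pi Q b(t-x)}$ is singular or oscillatory in $b$ and only their product has a finite limit, so the principal branches (and the $+i0$ contour prescription) must be tracked consistently; pinning down the normalisation of the rescaling of the $\cA_q$-coordinates is the other point requiring care. Once these are fixed, matching with $K(w,z;g)$ is mechanical.
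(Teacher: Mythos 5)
Your proposal is correct and follows essentially the same route as the paper: rescale $x,t$ by $b$, apply Theorem~\ref{limit} to the $G_b$-factor, and absorb $e^{\pi Qb(t-x)}$ together with the branch factor from $(-2\pi i b^2)/(2\sin\pi b^2)$ into $(-iB)^{i(t-x)}$, recovering the Mellin kernel of $R_{-i}=R_+$ (and the conjugate kernel for $V^*$). Your explicit tracking of the Jacobian factor $b$ and the dominated-convergence justification are details the paper leaves implicit, but the argument is the same.
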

\begin{proof} Under the scaling, the coaction becomes
$$\int_{\R+i0} \frac{G_b(ibx-ibz)e^{\pi Qb(z-x)}}{(2\sin\pi b^2)^{ix-iz}}A^{ix}B^{iz-ix}f(z)dz.$$
Using the limit formula \eqref{Glim} for $G_b(ibx)$, we have:
\Eqn[
&=&\int_{\R+i0} \frac{(2\pi b)G_b(ibx-ibz)e^{\pi b^2(z-x)}e^{\pi(z-x)}(e^{-\frac{\pi i}{2}})^{ix-iz}}{(-2i\sin\pi b^2)^{ix-iz}}A^{ix}B^{iz-ix}f(z)dz\\
&=& \frac{1}{2\pi}\left(\frac{\pi b^2}{\sin\pi b^2}\right)^{ix-iz}\int_{\R+i0} \frac{(2\pi b)G_b(ibx-ibz)}{(-2i\pi b^2)^{ix-iz}}e^{\pi b^2(z-x)}A^{ix}(-iB)^{iz-ix}f(z)dz\\
&\to&\frac{1}{2\pi}\int_{\R+i0} \G(ix-iz)A^{ix}(-iB)^{iz-ix}f(z)dz\\
&=&R_+f(x).
]
Taking the conjugate of the above formula and renaming the variables, we see that the coaction corresponding to $V^*$ is precisely $R_-$.

\end{proof}

\begin{Prop}\cite[(4.19)]{WZ} The space $C_{\oo}(\cA_q)$ can be recovered from the multiplicative unitary $V\in M(\cK(\cH)\ox\cA_q)$ by
\Eq{C_{\oo}(\cA_q)=\mbox{ norm closure of }\{(\w\ox1)V+(\w'\ox1)V^*|\w,\w'\in\cB(\cH)^*\}.}
\end{Prop}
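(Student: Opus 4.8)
The plan is to prove the asserted equality by two inclusions, after first isolating the role of $V^{*}$. Note the elementary identity $((\w\ox1)V)^{*}=(\w^{*}\ox1)V^{*}$, where $\w^{*}(T):=\overline{\w(T^{*})}$ is again a bounded functional; thus the set $\{(\w\ox1)V+(\w'\ox1)V^{*}\mid\w,\w'\in\cB(\cH)^{*}\}$ is automatically $*$-closed, which is exactly why slices of $V$ alone are not enough---by the pentagon equation they form only a (generally non-self-adjoint) subalgebra. It therefore suffices to show (i) each slice of $V$ and of $V^{*}$ lies in $C_{\oo}(\cA_q)$, and (ii) the generating set $\cA^{\oo}(\cA_q)$ is contained in the operator-norm closure of the span of these slices.

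For (i) I would evaluate the slice by vector functionals $\w_{g,h}(T)=\langle Tg,h\rangle$ with $g,h\in\cW$, using the explicit kernel from the corepresentation Theorem:
\[(\w_{g,h}\ox1)V=\int_{\R}\int_{\R+i0}\overline{h(x)}\,g(t)\,e^{\pi Q(t-x)}\frac{G_b(ix-it)}{(2\sin\pi b^2)^{ib\inv(x-t)}}\,A^{ib\inv x}B^{ib\inv(t-x)}\,dt\,dx.\]
After the substitution $t\mapsto t+x$, all of $e^{\pi Q(t-x)}$, the power of $2\sin\pi b^2$, and $G_b(ix-it)$ become functions of the single new variable $t$, so the slice takes the form $\int\!\int F(x,t)\,A^{ib\inv x}B^{ib\inv t}\,dx\,dt$ with $F(x,t)=\overline{h(x)}\,g(x+t)\,\kappa(t)$, $\kappa(t):=e^{\pi Qt}(2\sin\pi b^2)^{ib\inv t}G_b(-it)$. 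Since $g,h\in\cW$ extend to entire functions with Gaussian decay on every horizontal line, $F(\cdot,t)$ is entire with rapid decay on horizontal lines; as a function of $t$ it is meromorphic, its only singularities being the simple poles of $G_b(-it)$, located exactly at $t=-ibn-im/b$ for $n,m\ge0$, and the asymptotics of Proposition \ref{asymp} together with the decay of $g$ give rapid decay of $F(\cdot,t+iv)$. Hence $(\w_{g,h}\ox1)V\in\cA^{\oo}(\cA_q)\subset C_{\oo}(\cA_q)$, once one checks that the $\R+i0$ contour may be shifted to the contour in the definition of $\cA^{\oo}(\cA_q)$ without crossing those poles. The same computation for $V^{*}$, using $V_{1/b^2}(z)^{*}=g_b(e^{z})$ and the conjugate Fourier transform formulas of Lemma \ref{FT}, likewise lands in $\cA^{\oo}(\cA_q)$.

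Part (ii) is the step I expect to be the main obstacle, since the functions $F(x,t)=\overline{h(x)}\,g(x+t)\,\kappa(t)$ do not factor through $x$ and $t$. The key is that their span is nevertheless dense: letting $g$ approximate the characters $y\mapsto e^{iky}$ converts $g(x+t)$ into $e^{ikx}e^{ikt}$, so finite linear combinations of slices approximate all functions of product form $f_1(x)f_2(t)$ with $f_1(x)=\overline{h(x)}e^{ikx}$ exhausting a dense subclass of the admissible $x$-functions and $f_2(t)$ ranging over the linear span of $\{e^{ikt}\kappa(t):k\in\R\}$. Because $\kappa$ has simple poles precisely at $t=-ibn-im/b$, dividing any admissible $f_2$ by $\kappa$ yields an \emph{entire}, rapidly decaying function, which is approximated by combinations of exponentials $e^{ikt}$ by a Paley--Wiener/Gaussian argument; multiplying back by $\kappa$ recovers $f_2$. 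Two points require care: controlling the growth of $\kappa$ and $1/\kappa$ on horizontal strips so that the approximants stay inside the admissible class, and upgrading $L^{1}$/$L^{2}$-type convergence of the coefficient functions to operator-norm convergence of the associated elements---this last follows from realizing $A^{ib\inv s}$ and $B^{ib\inv t}$ as the unitaries $e^{2\pi is}$ and $f\mapsto f(x+t)$ on $L^{2}(\R)$ and from the norm bounds recorded after the definition of $C_{\oo}(\cA_q)$.

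Finally, I would note that the argument admits a purely conceptual shortcut: by Corollary \ref{corepdelta} the operator $W$ of \eqref{WW} is a multiplicative unitary with associated corepresentation $V$, and the analytic properties of $G_b$---self-duality, the asymptotics of Proposition \ref{asymp}, and the Fourier transform formulas of Lemma \ref{FT}---are exactly the ingredients needed to verify that $W$ is \emph{modular} in the sense of Woronowicz--Zakrzewski, so that the stated identity is \cite[(4.19)]{WZ} applied verbatim. In the write-up I would present the direct slicing computation as the main content and mention the modularity route as the reason the identity holds in general.
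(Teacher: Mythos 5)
This Proposition is not proved in the paper at all: it is imported verbatim from \cite[(4.19)]{WZ}, since the multiplicative unitary $W$ of \eqref{WW} is itself taken from that paper, and the text that follows the statement is only a reinterpretation (via the explicit kernel of $V$ and the Mellin transform) as a Peter--Weyl-type statement for $G_+$. So your ``conceptual shortcut'' at the end is in fact the paper's entire argument, and there is nothing to verify beyond quoting [WZ]. Your part (i) --- slicing $V$ by vector functionals $\w_{g,h}$ with $g,h\in\cW$, substituting $t\mapsto t+x$, and matching the poles of $G_b(-it)$ at $t=-ibn-im/b$ against the admissible pole structure of $\cA^\oo(\cA_q)$ --- is sound in outline and consistent with the classical computation the paper does perform after the statement.

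The genuine gap is in part (ii), and it is not merely a technicality. First, the approximants $e^{ikt}\kappa(t)$ with $\kappa(t)=e^{\pi Qt}(2\sin\pi b^2)^{ib\inv t}G_b(-it)$ are \emph{not} of rapid decay: by the asymptotics of $G_b$, $|G_b(-it)|\sim e^{-\pi Qt}$ as $t\to+\oo$, so $|\kappa(t)|=O(1)$ there, and the proposed approximating elements do not lie in $\cA^\oo(\cA_q)$ at all. Second, the claim that ``dividing any admissible $f_2$ by $\kappa$ yields an entire, rapidly decaying function'' fails on both counts: $1/\kappa$ grows like $e^{-\pi Qt}$ as $t\to-\oo$, so rapid (i.e.\ faster-than-polynomial) decay of $f_2$ does not survive division, and $G_b(-it)$ has zeros at $t=i(Q+nb+mb\inv)$, so $f_2/\kappa$ acquires poles in the upper half-plane. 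Third, and most tellingly, your density argument never invokes the slices of $V^*$; if it were correct it would prove that the slices of $V$ alone are norm-dense, which contradicts the very reason the statement includes both $V$ and $V^*$ --- as the paper's classical discussion makes explicit, one needs both families of matrix coefficients $\G(-iz)a^{iw}(-ib)^{iz}$ (from $V$, i.e.\ $R_+$) and $\G(-iz)a^{iw}(ib)^{iz}$ (from $V^*$, i.e.\ $R_-$) to span $C_\oo(G_+)$, corresponding to the decomposition $\what[f]=\what[f_+]+\what[f_-]$ into the two boundary values $e^{\pm ibx}$. Any correct direct proof of the inclusion $C_\oo(\cA_q)\subseteq\overline{\{(\w\ox1)V+(\w'\ox1)V^*\}}$ must exhibit how an admissible $f_2$ splits into a $V$-part and a $V^*$-part, exactly as in that classical decomposition; as written, your argument cannot produce this splitting.
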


Recall that $V$ corresponds to the representation $R_+$ and similarly $V^*$ corresponds to $R_-$. Therefore in the classical ``$ax+b$" group, the above translates to the fact that functions on $G_+$ is spanned by matrix coefficients
\Eq{\frac{1}{2\pi}\G(-iz)a^{iw}(-ib)^{iz},\tab\frac{1}{2\pi}\G(-iz)a^{iw}(ib)^{iz}}
corresponding to $V$ and $V^*$.

More explicitly, note that for functions on $G_+$ of the form
$$g(\log a)f(b),$$
where $g\in L^2(\R), f\in L^2([0,\oo))$ are analytic, we can write using Fourier transform as
$$\int_\R\int_\R \widehat{g}(s)a^{is}\widehat{f}(x)e^{ibx}dxds,$$
and then using formally the Mellin transform for $x>0$:
$$e^{\pm ibx}=\int_{\R+i0} \G(-it)(\pm ibx)^{it}dt,$$
we see that the function $F(a,b)$ can be rewritten as
\Eq{\int_\R\int_{\R+i0}\widehat{g}(s)\widetilde{f_+}(t) \G(-it)e^{\frac{\pi t}{2}}a^{is}b^{it} dtds+\int_\R\int_{\R+i0}\widehat{g}(s)\widetilde{f_-}(t) \G(-it)e^{-\frac{\pi t}{2}}a^{is}b^{it} dtds,}
where
$$\til[f_{\pm}](t)=\int_0^\oo \what[f](\pm x)x^{it}dx$$
is analytic in $0<\Im(t)<1$ and of rapid decay in this strip.

Therefore this proposition can be interpreted as a form of ``Peter-Weyl" theorem for the quantum group $\cA_q$, which
says that $C_\oo$ functions on $\cA_q$ is spanned continuously by matrix coefficients of its unitary
corepresentations. (For a similar result see \cite{Ip}, where a different multiplicative unitary $V$ is used).

\subsection{Pairing and Representation of $\cB_q$}\label{sec:corep:pairing}
Recall that given a non-degenerate Hopf pairing $\<\;,\;\>$, from a corepresentation of a Hopf algebra $\cA$, we can construct a corresponding representation of the dual Hopf algebra $\cB$ by
$$\cB\ox\cH\xto[1\ox\Pi]\cB\ox(\cH\ox\cA)=(\cB\ox\cA)\ox\cH\xto[\<\;,\;\>\ox Id]\cH.$$

Now let us define the pairing between the generators $(A,B)$ of $\cA_q$ and $(X,Y)$ of $\cB_q$ as follow:
\begin{Def} We define
$$\<A,X\>=q^{-2},\tab \<A,Y\>=0,$$
$$\<B,X\>=0,\tab \<B,Y\>=-i.$$
Then they satisfy the coproduct relations with
$$\<A^n B^m, X\>=\<A,X\>^n \d_{m0}=q^{-2n}\d_{m0},$$
$$\<A^n B^m, Y\>=\<A^n, 1\>\<B^m, Y\>=-i\d_{m1}.$$
\end{Def}

From this pairing, we can formally extend the pairing to elements in the subclass of $M(C_\oo(\cA_q))$.
Let $\cD$ denote the image of $\cW$ under the corepresentation $\Pi$ to $\cH\ox~ M(C_\oo(\cA_q))$.
Then $$\cD\sub BC(\R)\ox\cE \sub BC(\R)\ox\cF$$ where $BC(\R)$ are bounded continuous functions on $\R$;
$$\cE=\mbox{ Linear span of } \left\{A^{is}\int_{\R+i0} F(t)B^{it}dt\right\},$$
where $F(t)$ is the same as in the definition of $\cA_\oo(\cA_q)$: meromorphic with possible poles at $t=-in-im/b^2$, and of rapid decay along imaginary direction;
$$\cF= \mbox{ Linear span of} \left\{g(\log A)\int_{\R+i0} F(t)B^{it}dt\right\},$$
where $F(t)$ is as above, and $g(s)$ is a bounded function on $\R$ that can be analytically extended to $\Im s=-2\pi i b^2$. Then we define the pairing with $X$ and $Y$ by formally extracting the zeroth and first power of $B$ respectively. More precisely, we have

\begin{Def} We define $X,Y$ as elements in the dual space $\cF^*$ by
$$\<\frac{i}{2\pi }g(\log A)\int_{\R+i0} F(t) B^{it}dt , X\> = g(\log q^2)(\Res_{t=0}F(t)),$$
$$\<\frac{i}{2\pi }g(\log A)\int_{\R+i0} F(t) B^{it}dt, Y\> = -i(\Res_{t=-i}F(t)).$$
\end{Def}

\begin{Thm}The representation of $\cB_q$ on $\cW$ given by
$$\cB_q: \cW\xto[\Pi]BC(\R)\ox \cF \xto[1\ox\<-,\cB_q\>] BC(\R),$$ induced from the corepresentation \eqref{coaction1} under the above pairing is precisely
\Eqn[
X\cdot f(x)&=&e^{2\pi b x}f(x),\\
Y\cdot f(x)&=&f(x-ib)=e^{2\pi b p}f(x),
] which is the Fourier transformed action of \eqref{HKX}-\eqref{HKY} defined in \cite{FK}.
\end{Thm}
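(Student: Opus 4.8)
The plan is to run the composite map $\cW \xto{\Pi} BC(\R)\ox\cF \xto{1\ox\<-,\cB_q\>} BC(\R)$ on a test function $f\in\cW$ and extract the action of $X$ and $Y$ by means of the defined residue pairings. Starting from the coaction formula \eqref{coaction1}, we have
\[
\Pi f(x) = \int_{\R+i0} f(t)\, e^{\pi Q(t-x)}\frac{G_b(ix-it)}{(2\sin\pi b^2)^{ib\inv(x-t)}}\, A^{ib\inv x}B^{ib\inv(t-x)}\,dt,
\]
so to apply the pairing with $X$ (resp. $Y$) we must isolate, for each fixed $x$, the residue of the $B$-coefficient at $t-x=0$ (resp. at $t-x=-i$), after rescaling $b\inv x, b\inv t$ to $x,t$ as in the Definition preceding the theorem; the $A^{ib\inv x}$ factor contributes $g(\log A)\mapsto g(\log q^2)$, i.e. evaluation at the appropriate shift. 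First I would rewrite $\Pi f(x)$ in the form $g_x(\log A)\int_{\R+i0}F_x(t)B^{it}\,dt$ required by the definition of $\cF$, reading off $F_x(t)$ proportional to $f(x+ b\cdot\text{(shift)})\,e^{\pi Q(t)}G_b(-it)(2\sin\pi b^2)^{\cdots}$ and $g_x$ carrying the $A^{ib\inv x}$ dependence.

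For the action of $X$: the relevant residue is at $t=0$ in the rescaled variable, where $G_b(ix-it)$ has no pole (the pole of $G_b$ at argument $0$ sits at $t=x$, and by the contour prescription $t$ runs above it) — rather the delta-like contribution comes from the structure of the pairing, which picks out $\Res_{t=0}F(t)$ of the genuine $B$-integrand. Using $\lim_{z\to0}zG_b(z)=\frac1{2\pi}$ (Proposition, Residues) together with the prefactor $(2\sin\pi b^2)^{ib\inv(x-t)} = (2\sin\pi b^2)^{i(x-t)/b}$ and $e^{\pi Q(t-x)}$ evaluated at the residue point, the constants conspire (this is exactly what the choice $s=2\sin\pi b^2$ was engineered for) to leave $X\cdot f(x) = e^{2\pi b x}f(x)$; the factor $e^{2\pi b x}$ is precisely what $Y = e^{2\pi bx}$ was under \eqref{HKY}, matched against $\<A,X\>=q^{-2}$, $\<B,Y\>=-i$ through the normalization $\frac{i}{2\pi}$ in the pairing definition. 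For the action of $Y$: I take $\Res_{t=-i}F_x(t)$; here one uses the functional equation $G_b(x+b)=(1-e^{2\pi ibx})G_b(x)$ (equivalently the shift in $t$ by $-i$ inside $G_b(ix-it)$), which produces the translation $f(x)\mapsto f(x-ib)$ on the function argument after the residue is evaluated, again with the $s$-dependent constants cancelling.

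The main obstacle I anticipate is \emph{bookkeeping of the phase and power-of-$s$ factors at the residue points}, i.e. verifying that the constants $e^{\pi Q(t-x)}$, $(2\sin\pi b^2)^{ib\inv(x-t)}$, the residue $\frac1{2\pi}$ of $G_b$, and the normalization $\frac{i}{2\pi}$ in the pairing all combine to give exactly $e^{2\pi bx}$ and exactly $f(x-ib)$ with no stray scalar — this is where the specific value $s=2\sin\pi b^2=i(q\inv-q)$ is forced. A secondary technical point is justifying that $\Pi f$ genuinely lands in $\cF$ (not merely $M(C_\oo(\cA_q))$) so that the residue pairings are defined: this requires checking that the $B$-integrand $F_x(t)=f(x+\cdots)e^{\pi Q t}G_b(-it)(\cdots)$ is meromorphic with poles only at $t\in -in-im/b^2$ (which follows from the pole structure of $G_b$ given in Proposition, Analyticity, since $f\in\cW$ is entire) and of rapid decay along the imaginary direction (which follows from the asymptotics of $G_b$ combined with the rapid decay of $f$ from Proposition \ref{MTprop}(a)), and that $g_x$ extends analytically to $\Im s = -2\pi i b^2$. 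Once these membership facts are in place, the two residue computations are short and the theorem follows, with the final identification $Y\cdot f(x)=e^{2\pi bp}f(x)$ being the Fourier-transformed form of \eqref{HKX} exactly as claimed.
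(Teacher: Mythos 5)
Your proposal follows the paper's proof essentially verbatim: substitute $z\mapsto bz+x$ in the coaction to put it in the form $g(\log A)\int_{\R+i0}F_x(z)B^{iz}\,dz$, then evaluate the residue pairings at $z=0$ and $z=-i$ using $\lim_{z\to 0}zG_b(z)=\frac{1}{2\pi}$ and the functional equation $G_b(x+b)=(1-e^{2\pi i b x})G_b(x)$, with the choice $s=2\sin\pi b^2$ making all constants cancel. One small correction to your parenthetical for the $X$-pairing: after rescaling, the point $t=0$ \emph{is} the pole of $G_b(ix-it)$ at argument zero (it corresponds to $t=x$ in the original variable), and the nonvanishing of $\Res_{t=0}F_x(t)$ comes precisely from that pole --- your claim that $G_b$ has no pole there and that the contribution comes from "the structure of the pairing" contradicts your own (correct) subsequent use of its residue.
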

Note that the image of $\cW$ is actually preserved in $\cW\sub BC(\R)$.
\begin{proof}
Applying the pairing, and introducing the scaling of $b$ in $dz$, we obtain for any $f(x)\in\cW$:
\begin{eqnarray*}
&&\left\<\int_{\R+i0} f(z)\frac{G_b(ix-iz)e^{\pi Q(z-x)}}{(2\sin\pi b^2)^{ib\inv(x-z)}}A^{ib\inv x}B^{ib\inv(z-x)}dz, X\right\>\\
&&\mbox{changing $z$ to $bz+x$: }\\
&=&\left\<\int_{\R+i0} f(bz+x)\frac{bG_b(-ibz)e^{\pi Qb z}}{(2\sin\pi b^2)^{-iz}}A^{ib\inv x}B^{iz}dz, X\right\>\\
&=&(-2\pi i)f(x)q^{-2(ib\inv x)}b(\Res_{z=0}G_b(-ibz))\\
&=&e^{2\pi b x}f(x),
\end{eqnarray*}
since $\dis\lim_{x\to 0}xG_b(x)=\frac{1}{2\pi}$, hence $\Res_{z=0}G_b(-ibz)=\frac{1}{-2\pi i b}$.

So the action for $X$ is
$$X\cdot f(x)=e^{2\pi b x}f(x).$$

For the action of $Y$ we have
\begin{eqnarray*}
&&\left\<\int_{\R+i0} f(z)\frac{G_b(ix-iz)e^{\pi Q(z-x)}}{(2\sin \pi b^2)^{ib\inv(x-z)}}A^{ib\inv x}B^{ib\inv(z-x)}dz, Y\right\>\\
&=&\left\<\int_{\R+i0} f(bz+x)\frac{bG_b(-ibz)e^{\pi z(1+b^2)}}{(2\sin \pi b^2)^{-iz}}A^{ib\inv x}B^{iz}dz, Y\right\>\\
&=&(-i)(-2\pi i)f(x-ib)b(-q\inv) i(q\inv-q)(\Res_{z=-i}G_b(-ibz))\\
&=&f(x-ib),
\end{eqnarray*}
where
\begin{eqnarray*}
\Res_{z=-i}G_b(-ibz)&=&\lim_{z\to -i}(z+i)G_b(-ibz)\\
&=&\lim_{z\to0} zG_b(-ibz-b)\\
&=&\lim_{z\to0} z\frac{G_b(-ibz)}{1-e^{2\pi i b (-ibz-b)}}\\
&=&\frac{1}{-2\pi i b}\frac{1}{1-e^{-2\pi i b^2}}\\
&=&\frac{1}{-2\pi i b}\frac{1}{1-q^{-2}}.
\end{eqnarray*}
So the action for $Y$ is $$Y\cdot f(x)=f(x-ib)$$ or $Y=e^{2\pi b p}$.
\end{proof}

\begin{Rem} If we choose to work with $R_-$, then under the pairing we will get instead
$X=e^{2\pi b x}$ and $Y=-e^{2\pi b p}$, another representation for $\cB_q$ by negative operator $Y$.
\end{Rem}

\addcontentsline{toc}{section}{References}

\end{document}